\documentclass[preprint,11pt]{article}
\usepackage{appendix}
\usepackage{longtable}
\usepackage{booktabs}
\usepackage{mathrsfs}
\usepackage{amsmath,amsthm,amsfonts,color,graphicx,xcolor,dsfont}
\usepackage{pdfsync,float,hyperref} 
\usepackage{amssymb}
\usepackage{verbatim}
\usepackage{epstopdf}
\usepackage{color}

\usepackage{graphicx}
\usepackage{float}
\usepackage{subfigure}

\usepackage{algorithm}
\usepackage{algpseudocode}
\usepackage{amsmath}
\allowdisplaybreaks
\numberwithin{equation}{section}
\hypersetup{
    colorlinks=true,
    linkcolor=blue,
    filecolor=magenta,
    urlcolor=magenta,
  }

\voffset=-3cm
\hoffset=-2.25cm
\textheight=24cm
\textwidth=17.25cm

\newtheorem{remark}{Remark}[section]

\newtheorem{theorem}{Theorem}[section]
\newtheorem{definition}{Definition}[section]

\newtheorem{assumption}{Assumption}[section]
\newtheorem{model}{Model}

\newcommand{\cstab}[1]{{\mathrm{C}_{\mathrm{stab}}(#1)}}

\begin{document}
\title{An $hp$-Adaptive Sampling Algorithm for Dispersion Relation Reconstruction of 3D Photonic Crystals}
\date{}
\author{Yueqi Wang\thanks{Department of Mathematics, The University of Hong Kong, Pokfulam Road, Hong Kong. Email: {\tt{u3007895@connect.hku.hk}} YW acknowledges support from the Research Grants Council (RGC) of Hong Kong via the Hong Kong PhD Fellowship Scheme (HKPFS).}\and Richard Craster\thanks{Department of Mathematics, Imperial College London, London SW7 2AZ,UK. Email:\texttt{r.craster@imperial.ac.uk}} \and Guanglian Li\thanks{Corresponding author. Department of Mathematics, The University of Hong Kong, Pokfulam Road, Hong Kong. Email: {\tt{lotusli@maths.hku.hk}} GL acknowledges the support from Newton International Fellowships Alumni following-on funding awarded by The Royal Society, Young Scientists fund (Project number: 12101520) by NSFC and Early Career Scheme (Project number: 27301921), RGC, Hong Kong.}}

\maketitle

\begin{abstract}
In this work we investigate the computation of dispersion relation (i.e., band functions) for three-dimensional photonic crystals, formulated as a parameterized Maxwell eigenvalue problem, using a novel $hp$-adaptive sampling algorithm. We develop an adaptive sampling algorithm in the parameter domain such that local elements with singular points are refined at each iteration, construct a conforming element-wise polynomial space on the adaptive mesh such that the distribution of the local polynomial spaces reflects the regularity of the band functions, and define an element-wise Lagrange interpolation operator to approximate the band functions. We rigorously prove the convergence of the algorithm. To illustrate the significant potential of the algorithm, we present two numerical tests with band gap optimization.\\
\textbf{Key words}: $hp$-adaptive sampling algorithm, photonic crystals, band gap maximization, parameterized Maxwell eigenvalue problem
\end{abstract}

\section{Introduction}
Photonic crystals (PhCs) are periodic nanostructures typically made from dielectric materials and are widely used in optics for wave manipulation and control \cite{joannopoulos2008molding}. For some specially designed PhCs structures, electromagnetic waves with frequencies in certain ranges cannot propagate in the crystals; these prohibited ranges are so-called band gaps and the knowledge of their position or the ability to design for specific ranges underpin many important applications, including optical transistors, photonic fibers and low-loss optical mirrors \cite{yanik2003all,russell2003photonic,wang2023analytical,labilloy1997demonstration}. The dispersion relation describes the dependence of wave frequency on its wave vector when the wave passes through certain materials. Photonic band gaps offer numerous benefits in practical applications, e.g., waveguides, cavities, and photonic crystal fibers \cite{PhysRevLett.90.123901,chen2023molecular,PhysRevB.90.115140,Li_2017}. Notably there is a parallel field in acoustics and mechanical vibration, phononic crystals \cite{laude2015book}, where band gaps and periodic media are used to filter and control elastic waves and design devices. In binary systems (i.e., PhCs composed of two materials), the appearance, size, and location of band gaps rely on various factors, including filling fraction, lattice symmetry, topology of constituent material phases, and the contrast between the permittivity of materials \cite{joannopoulos2008molding}. By effectively controlling these parameters, it is possible to design periodic composite materials that possess desired photonic band gaps, thereby achieving excellent performance in practical applications.

The behavior of electromagnetic waves is described mathematically by Maxwell's equations \cite{jackson1999classical}
and the band gaps correspond to the gaps in the spectrum of the Maxwell operator with periodic coefficients.  Thus, the task of dispersion relation reconstruction or the numerical
simulation of electromagnetic waves inside PhCs can be transformed into the approximation of the Maxwell eigenvalue problem. For
infinite crystals with perfect periodicity, Bloch's theorem reduces the task to a set of parameterized Maxwell eigenvalue problems
defined in the unit cell with periodic boundary conditions. The associated parameter is the so-called wave vector $\mathbf{k}$,
which varies in the irreducible Brillouin zone (IBZ) \cite{kuchment1993floquet}. The $n$th band function $\omega_n$, which is the
square root of the $n$th largest eigenvalue up to a constant, is regarded as a function of the wave vector $\mathbf{k}$ for all
$n\in\mathbb{N}^+$ and the band gap represents the separation between two adjacent band functions. Thus, computing the $n$th band
function $\omega_n(\mathbf{k})$ boils down to solving an infinite number of Maxwell eigenvalue problems. To create these band gaps,
it is imperative that the permittivity exhibits distinct values in both the inclusion and the background of the unit cell, and that
the magnitude difference, referred to as the ``contrast'', is often substantial. However, solving the Maxwell
eigenvalue problems with high-contrast and piecewise constant  coefficients presents notorious numerical challenges due to, e.g.,
spurious modes \cite{sun1995spurious}, and an indefinite large-scale linear system.

To reduce the computational burden, a natural
approach is to reduce  the number of parameters $\mathbf{k}$ by constraining $\mathbf{k}$ to a coarse mesh within the IBZ or its boundaries.
Nonetheless, recent research has uncovered instances where tiny band gaps, e.g., the so-called avoided crossings, observed within
this simplified approximation, vanish upon the inclusion of additional $\mathbf{k}$-points or the enhancement of eigenvalue precision
\cite{harrison2007occurrence,maurin2018probability,craster2012dangers}. This drawback greatly affects some application areas of
photonic materials in which the requirements are very sensitive to tiny band gaps. For example, it is important to avoid the
smallest band gaps in the manufacture of Large-Pitch PhC fiber for a larger output average power \cite{eidam2011fiber} while
the leakage channel fibers exploit the smallest band gaps \cite{dong2009extending}. There is a conflict between desiring high
accuracy (i.e., sampling at many $\mathbf{k}$ over the IBZ), and the computational cost in doing so for three-dimensional (3D)
structures. Thus, it is of huge importance to develop algorithms that use as few sampling points for PhCs band function
calculations as possible and yet accurately characterize the band functions.

In this work, we extend the $hp$-adaptive sampling method \cite{wang2023dispersion2} (for 2D PhCs, described by parameterized Helmholtz eigenvalue problem) to the 3D cases, by interpolating the band functions elementwise on an adaptive mesh, and demonstrate its efficiency on the band gap maximization problem \cite{chen2022inversely,yan2021photonic,hammond2022high}. First, we adaptively refine the mesh in the parameter domain, cf., Algorithm \ref{alg1}, by element-wise indicator \eqref{indicator1} and tolerance \eqref{tolerance}, which adaptively refines elements containing singularities, cf. Theorem \ref{refining mechanism}. Second, we construct a conforming finite element space \eqref{def:vn} on the adaptive mesh, and derive element-wise interpolation by identifying a set of basis functions for the local polynomial space. Third, we establish the $hp$-adaptive sampling algorithm in Algorithm \ref{alg2}, and rigorously justify its convergence in Theorems \ref{exp} and \ref{algebraic}. The resulting method allows approximating band functions accurately using significantly fewer sampling points, thus making the band gap maximization more efficient.

To demonstrate the efficiency of the algorithm, we employ two models of 3D PhCs, which consist of layered cubic structures with silicon blocks and spheres embedded in air. By optimizing the dimensions of these blocks and the radius of the spheres, we identify an optimal unit cell structure that maximizes the band gap. These experiments show the challenges of calculating the band function of 3D PhCs and showcase the process of the adaptive interpolation algorithm. To verify the performance of Algorithm \ref{alg2}, we display its convergence history and compare it with the uniform refinement.

The rest of this paper is organized as follows. In Section \ref{Problem formulation}, we summarize the derivation of band functions for 3D PhCs and discuss the properties of band functions. We also outline the challenges associated with computing the Maxwell eigenvalue problem and describe the $\mathbf{H}(\text{curl})$-conforming $\mathbf{k}$-modified N\'{e}d\'{e}lec edge finite element method (FEM). In Section \ref{sec:hp-sampling}, we extend the $hp$-adaptive sampling method \cite{wang2023dispersion2} to 3D cases, and in Section \ref{sec:convergence}, we present a convergence analysis of this method. In Section \ref{Section optimization}, we define the shape optimization problem, which aims to find the maximal band gap in cubic lattices with silicon blocks and spheres. We also include numerical experiments in this section to verify our results. Finally, in Section \ref{sec:conclusion}, we provide a conclusion and potential future works.

\section{Problem formulation}\label{Problem formulation}
We recap in this section the derivation of the band function computation for 3D PhCs as a parametrized Maxwell eigenvalue problem \eqref{strong} and its weak formulation \eqref{variational 2}. Then we describe its finite element discretization by the $\mathbf{H}(\text{curl})$- conforming $\mathbf{k}$-modified N\'{e}d\'{e}lec edge element method \eqref{discrete variational 2}. Finally, we provide an efficient approximation of the gradient of the band functions.

\subsection{Maxwell equations, eigenvalue problem, and the weak formulation}\label{Maxwell equations and eigenvalue problems, and the weak formulation}

In SI convention, the time harmonic Maxwell equations with $\mathbf{E}(\mathbf{x},t):=\mathbf{E}(\mathbf{x})\mathrm{e}^{-i\omega t}$ and $\mathbf{H}(\mathbf{x},t):=\mathbf{H}(\mathbf{x})\mathrm{e}^{-i\omega t}$ for linear, non-dispersive, and nonmagnetic media, with free charges and free currents, consist of a system of four equations \cite{jackson1999classical}
\begin{subequations}
\begin{align}
    \nabla\times\mathbf{E}(\mathbf{x})-i\omega\mu_{0}\mathbf{H}(\mathbf{x})&=0\quad\text{ in }\mathbb{R}^3,\label{harm1}    \\
    \nabla\times\mathbf{H}(\mathbf{x})+i\omega\epsilon_{0}\epsilon(\mathbf{x})\mathbf{E}(\mathbf{x})&=0\quad\text{ in }\mathbb{R}^3, \label{harm2}\\  \nabla\cdot\left(\epsilon(\mathbf{x})\mathbf{E}(\mathbf{x})\right)&=0\quad\text{ in }\mathbb{R}^3,\label{harm3}\\
    \nabla\cdot\mathbf{H}(\mathbf{x})&=0\quad\text{ in }\mathbb{R}^3,\label{harm4}
\end{align}
\end{subequations}
where $\mathbf{E}$ is the electric field, $\mathbf{H}$ is the magnetic field, the scalar $\omega\geq 0$ is the frequency of the electromagnetic wave, $\mu_0$ is the vacuum permeability, $\epsilon_0$ is the vacuum permittivity, and $\epsilon\in L^{\infty}(\mathbb{R}^3;\mathbb{R}^+)$ is the relative permittivity.

Applying the curl operator to \eqref{harm2} and using \eqref{harm1} give the equations for the magnetic field
\begin{equation}\label{H}
\left\{\begin{aligned}
        \nabla\times\left(\epsilon(\mathbf{x})^{-1}\nabla\times\mathbf{H}(\mathbf{x})\right)-\left(\omega c^{-1}\right)^2\mathbf{H}(\mathbf{x})&=0\qquad\text{ in }\mathbb{R}^3,\\
        \nabla\cdot\mathbf{H}(\mathbf{x})&=0\qquad\text{ in }\mathbb{R}^3,
\end{aligned}\right.
\end{equation}
with $\epsilon_{0}\mu_{0}=c^{-2}$, where $c$ is the speed of light. The electric field $\mathbf{E}(\mathbf{x})$ can be derived from \eqref{harm2}:
\begin{equation}
    \mathbf{E}(\mathbf{x})=\frac{i}{\omega\epsilon_0\epsilon(\mathbf{x})}\nabla\times\mathbf{H}(\mathbf{x}),
\end{equation}
which implies \eqref{harm3}. The above two equations form the governing system of 3D PhCs.

Next, 3D periodic PhCs possess a discrete translational symmetry in $\mathbb{R}^3$ \cite{joannopoulos2008molding}, i.e., the relative permittivity $\epsilon(\mathbf{x})$ satisfies
\begin{equation*}
   \epsilon(\mathbf{x}+c_1\mathbf{a}_1+c_2\mathbf{a}_2+c_3\mathbf{a}_3)=\epsilon(\mathbf{x}),
    \quad \forall \mathbf{x}\in\mathbb{R}^3\text{ and }c_1,c_2,c_3\in\mathbb{Z}.
\end{equation*}
The primitive lattice vectors, denoted as $\mathbf{a}_1,\mathbf{a}_2,\mathbf{a}_3$, are the shortest possible vectors that satisfy this condition which span the unit cell $\Omega$. For example, square lattice has primitive lattice vectors $\mathbf{a}_i = a{e}_i$ for $i=1,2,3$, with $a\in\mathbb{R}$ being the lattice constant and $(e_i)_{i=1,2,3}$ being the canonical basis in $\mathbb{R}^3$. Additionally, the reciprocal lattice vectors,  $\mathbf{b}_i$, are defined by the property
\begin{equation}
  \mathbf{b}_i \cdot \mathbf{a}_j=2\pi\delta_{ij},\quad \text{ for }i,j=1,2,3,
\end{equation}
which generate the so-called reciprocal lattice. The elementary cell of the reciprocal lattice is the (first) Brillouin zone $\mathcal{B}$, i.e., the region closer to a certain lattice point than to any other lattice points in the reciprocal lattice. Note that if the materials in the unit cell have additional symmetry, e.g. mirror symmetry, we can further restrict $\mathbf{k}$ to the irreducible Brillouin zone (IBZ).

Based on Bloch's theorem \cite{kuchment1993floquet}, the spectrum corresponding to a periodic structure in the full space is equivalent to the union of all spectra corresponding to the quasi-periodic problems in the unit cell. Thus, we apply Bloch's theorem and aim to find the wave function $\mathbf{H}$ taking the form of a plane wave modulated by a periodic function. Mathematically, it can be written as
$\mathbf{H}(\mathbf{x})=e^{i\mathbf{k}\cdot\mathbf{x}}\mathbf{u}(\mathbf{x})$,
where $\mathbf{u}(\mathbf{x})$ is a periodic function satisfying $\mathbf{u}(\mathbf{x}+\mathbf{a_i})=\mathbf{u}(\mathbf{x})$ for $i=1,2,3$ and $\mathbf{k}$ is the wave vector varying in the Brillouin zone $\mathcal{B}$. Thus, by Bloch's theorem, the eigenvalue problem \eqref{H} reduces to
\begin{equation}\label{strong}
\left\{\begin{aligned}
(\nabla+i\mathbf{k})\times\left(\epsilon(\mathbf{x})^{-1}(\nabla+i\mathbf{k})\times
\mathbf{u}(\mathbf{x})\right)-\lambda \mathbf{u}(\mathbf{x})&=\mathbf{0}\qquad\text{ in }\Omega,\\
(\nabla+i\mathbf{k})\cdot\mathbf{u}(\mathbf{x})&=0\qquad\text{ in }\Omega,
\end{aligned}\right.
\end{equation}
where $\lambda=\left(\omega c^{-1}\right)^2$, $\mathbf{k}$ varies in the Brillouin zone $\mathcal{B}$, and $\mathbf{u}(\mathbf{x})$ satisfies
the periodic boundary conditions $\mathbf{u}(\mathbf{x})=\mathbf{u}(\mathbf{x}+\mathbf{a_j})$ with $\mathbf{a_j}$ being the primitive lattice vector for $j=1,2,3$. For all $\mathbf{k}\in \mathcal{B}$, we can enumerate these eigenvalues in a non-decreasing manner (with their multiplicities counted) as
\begin{align*}  0\leq\lambda_1(\mathbf{k})\leq\lambda_2(\mathbf{k})\leq\cdots \leq \lambda_n(\mathbf{k})\leq\cdots\leq \infty.
\end{align*}
Then $\{\lambda_n(\mathbf{k})\}_{n=1}^{\infty}$ is an infinite sequence with $\lambda_n(\mathbf{k})$ being a continuous function with respect to the wave vector $\mathbf{k}$ and $\lambda_n(\mathbf{k})\to \infty$ when $n \to \infty$ \cite{glazman1965direct}.

Below we state the regularity of band functions, which can be proved in a similar manner as the 2D case \cite{wang2023dispersion}. Here, $ \mathrm{Lip}(\mathcal{B})$ is the space of Lipschitz continuous functions in the Brillouin zone $\mathcal{B}$ and $\mathring{A}(\mathcal{B})$ denotes the space of piecewise analytic functions with a zero Lebesgue measure of singular point set composed of points of degeneracy (i.e., the set of points at which certain band functions intersect, with zero Lebesgue measure). Throughout, the set of singular points is denoted by $\mathbf{S}$.
\begin{theorem}[Piecewise analyticity and Lipschitz continuity of band functions]\label{thm:lipchitz}
For 3D periodic PhCs, $\lambda_n(\mathbf{k})\in \mathrm{Lip}(\mathcal{B})\cap \mathring{A}(\mathcal{B})$ for all $n\in\mathbb{N}^+$.
\end{theorem}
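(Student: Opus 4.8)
The plan is to realize the parameterized problem \eqref{strong} as a self-adjoint operator family in $\mathbf{k}$ and then invoke analytic perturbation theory. Concretely, I would introduce the Hermitian sesquilinear form associated with \eqref{strong},
\[
a(\mathbf{k};\mathbf{u},\mathbf{v})=\int_{\Omega}\epsilon(\mathbf{x})^{-1}\bigl((\nabla+i\mathbf{k})\times\mathbf{u}\bigr)\cdot\overline{\bigl((\nabla+i\mathbf{k})\times\mathbf{v}\bigr)}\,d\mathbf{x},
\]
posed on the divergence-free subspace $X_{\mathbf{k}}=\{\mathbf{u}\in\mathbf{H}_{\mathrm{per}}(\mathrm{curl};\Omega):(\nabla+i\mathbf{k})\cdot\mathbf{u}=0\}$, and let $A(\mathbf{k})$ denote the associated operator. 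Since $\epsilon\in L^{\infty}(\Omega;\mathbb{R}^+)$ is bounded above and away from zero, $A(\mathbf{k})$ is self-adjoint with compact resolvent, so its spectrum is exactly the discrete sequence $\{\lambda_n(\mathbf{k})\}$. The structural observation driving both assertions is that $(\nabla+i\mathbf{k})\times$ depends affinely on $\mathbf{k}$, hence $a(\mathbf{k};\cdot,\cdot)$ is a polynomial, and therefore entire, function of $\mathbf{k}$; this makes $\{A(\mathbf{k})\}$ an analytic family in the sense of Kato.

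For the Lipschitz bound I would argue through the min–max principle
\[
\lambda_n(\mathbf{k})=\min_{\substack{V\subset X_{\mathbf{k}}\\ \dim V=n}}\ \max_{\mathbf{0}\neq\mathbf{u}\in V}\frac{a(\mathbf{k};\mathbf{u},\mathbf{u})}{\|\mathbf{u}\|_{L^2}^2}.
\]
Writing $(\nabla+i(\mathbf{k}+\mathbf{h}))\times\mathbf{u}=(\nabla+i\mathbf{k})\times\mathbf{u}+i\,\mathbf{h}\times\mathbf{u}$ and expanding, the increment $a(\mathbf{k}+\mathbf{h};\mathbf{u},\mathbf{u})-a(\mathbf{k};\mathbf{u},\mathbf{u})$ is controlled by $C|\mathbf{h}|\bigl(\|(\nabla+i\mathbf{k})\times\mathbf{u}\|_{L^2}\|\mathbf{u}\|_{L^2}+|\mathbf{h}|\|\mathbf{u}\|_{L^2}^2\bigr)$, using boundedness of $\epsilon^{-1}$. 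Since the eigenfunctions' curl-energy is comparable to $\lambda_n(\mathbf{k})\|\mathbf{u}\|_{L^2}^2$, feeding this perturbation estimate into the quotient on both sides yields $|\lambda_n(\mathbf{k}+\mathbf{h})-\lambda_n(\mathbf{k})|\le C|\mathbf{h}|$ locally uniformly, giving $\lambda_n\in\mathrm{Lip}(\mathcal{B})$. A technical point is that the constraint space $X_{\mathbf{k}}$ itself moves with $\mathbf{k}$; I would handle this through the analytic family of orthogonal projections onto $X_{\mathbf{k}}$ (equivalently, a gauge transformation $\mathbf{u}\mapsto e^{i\mathbf{k}\cdot\mathbf{x}}\mathbf{u}$ that transfers the $\mathbf{k}$-dependence to the quasi-periodic boundary conditions), so that the reduced problem lives on a fixed Hilbert space.

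For the piecewise analyticity I would complexify $\mathbf{k}$ and verify that $\{A(\mathbf{k})\}$ is a self-adjoint analytic family of type (B). Kato--Rellich theory then supplies, for each $\mathbf{k}_0\in\mathcal{B}$, a contour enclosing any finite cluster of eigenvalues whose total spectral projection $P(\mathbf{k})$ is holomorphic near $\mathbf{k}_0$; compressing to the finite-dimensional range of $P(\mathbf{k})$ reduces the cluster to the eigenvalues of an analytic Hermitian matrix family $M(\mathbf{k})$. On any open set where $\lambda_n$ remains simple, the corresponding branch is an analytic eigenvalue of $M(\mathbf{k})$, so $\lambda_n$ is real-analytic there, and the only obstruction is the locus where two branches collide, which is precisely the degeneracy set $\mathbf{S}$. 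To show $\mathbf{S}$ has zero Lebesgue measure I would note that within each cluster the coincidence of two eigenvalues of $M(\mathbf{k})$ is the vanishing of its discriminant $\Delta(\mathbf{k})$, a real-analytic function; the zero set of a real-analytic function that is not identically zero has measure zero, and a compactness/covering argument over $\mathcal{B}$ patches the local statements into $\lambda_n\in\mathring{A}(\mathcal{B})$.

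The main obstacle I anticipate is the measure-zero claim, specifically ruling out $\Delta\equiv0$ on an open set, i.e. excluding a whole region of persistent (flat) degeneracy. The von Neumann--Wigner heuristic suggests crossings of Hermitian families are non-generic and of high codimension, but turning this into a proof requires showing the permittivity structure does not force a symmetry-protected flat degeneracy. I would establish this by exhibiting, in each cluster, at least one $\mathbf{k}$ where the eigenvalues split, computed via first-order perturbation using the explicit affine $\mathbf{k}$-dependence of $a(\mathbf{k};\cdot,\cdot)$; this forces $\Delta\not\equiv0$ and confines $\mathbf{S}$ to a proper real-analytic subvariety of measure zero. The bookkeeping of the moving constraint space and the reduction of the infinite-dimensional problem to the finite matrix $M(\mathbf{k})$ are the other delicate points, but both follow from the analytic Fredholm/Kato framework once the form is set up, exactly as in the 2D argument of \cite{wang2023dispersion}.
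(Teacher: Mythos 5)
Your proposal follows essentially the same route as the paper, which gives no independent proof but defers to the 2D argument of \cite{wang2023dispersion}: realizing \eqref{strong} as a self-adjoint analytic family of type (B) in Kato's sense, obtaining the Lipschitz bound from the min--max principle and the affine $\mathbf{k}$-dependence of the form, and obtaining piecewise analyticity with a measure-zero degeneracy set from the Riesz-projection reduction to an analytic Hermitian matrix family and the vanishing locus of its discriminant. Your sketch is a faithful (and more detailed) transcription of that argument to the 3D Maxwell setting, correctly flagging the moving constraint space and the $\Delta\not\equiv 0$ step as the delicate points.
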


Next, we introduce the weak formulation to \eqref{strong}. We first introduce useful notation for function spaces. For an
open, bounded and simply-connected Lipschitz domain $\Omega\subset\mathbb{R}^3$, we define the space of square integrable
functions $L^2(\Omega)$ equipped with the inner product $(u,v)=\int_{\Omega}u(\mathbf{x})\cdot\Bar{v}(\mathbf{x})\,\mathrm{d}
\mathbf{x}$ and the corresponding norm by $\|u\|=\left(\int_{\Omega}|u(\mathbf{x})|^2\,\mathrm{d}\mathbf{x}
\right)^{\frac{1}{2}}$, where $\mathbf{\Bar{\cdot}}$ denotes the complex conjugate. We denote by $H^1(\Omega)$ the usual
Sobolev space with norm
\begin{align*}
\|f\|_{H^1}:=\bigg(\sum_{|\mathbf{\alpha}|\leq1}\Big\|\frac{\partial^{\mathbf{\alpha}}}{\partial\mathbf{x}^{\mathbf{\alpha}}}f\Big\|^2_{\Omega}\bigg)^{1/2}.
\end{align*}
We also write $(\cdot,\cdot)$, $\|\cdot\|$ and $(\cdot,\cdot)_{\mathbf{H}^1}$, $\|\cdot\|_{\mathbf{H}^1}$ for the inner
products and norms of vector-valued functions spaces $L^2(\Omega)^3$, $H^1(\Omega)^3$, respectively. Let $(\mathbf{u},
\mathbf{v})_{\mathbf{H}(\text{curl};\Omega)}=(\mathbf{u},\mathbf{v})+(\nabla\times\mathbf{u},\nabla\times\mathbf{v})$
be the standard inner product on the Sobolev space $\mathbf{H}(\text{curl};\Omega):=\{\mathbf{v}\in L^2(\Omega)^3:
\nabla\times\mathbf{v}\in L^2(\Omega)^3\}$ and $(\mathbf{u},\mathbf{v})_{\mathbf{H}(\text{div};\Omega)}=(\mathbf{u},\overline{\mathbf{v}})+(\nabla\cdot\mathbf{u},\nabla\cdot\mathbf{v})$ be the inner product on $\mathbf{H}(\text{div};\Omega):=\{\mathbf{v}\in L^2(\Omega)^3:\nabla\cdot\mathbf{v}\in L^2(\Omega)^3\}$. Also we will use the following spaces for the weak formulation:
\begin{equation*}
\begin{aligned}
    C^\infty({\Omega})&=\{v:{\Omega}\to\mathbb{C}:D^{\mathbf{\alpha}}v \text{ exists },\forall\text{multi-indices }\mathbf{\alpha}\},\\
    C^\infty_{\text{per}}({\Omega})&=\{v\in C^\infty({\Omega}):v(\mathbf{x}+\mathbf{a}_i)=v(\mathbf{x}),\forall i=1,2,3,\forall \mathbf{x},\mathbf{x}+\mathbf{a}_i\in\partial\Omega\},\\
    C^\infty_{\text{div}}(\Bar{\Omega})&=\{\mathbf{v}\in C^\infty(\Bar{\Omega})^3:\mathbf{v}(\mathbf{x}+\mathbf{a}_i)\cdot\mathbf{n}=-\mathbf{v}(\mathbf{x})\cdot\mathbf{n},\forall i=1,2,3,\forall \mathbf{x},\mathbf{x}+\mathbf{a}_i\in\partial\Omega\},\\
    C^\infty_{\text{curl}}({\Omega})&=\{\mathbf{v}\in C^\infty({\Omega})^3:\mathbf{v}(\mathbf{x}+\mathbf{a}_i)\times\mathbf{n}=-\mathbf{v}(\mathbf{x})\times\mathbf{n},\forall i=1,2,3,\forall \mathbf{x},\mathbf{x}+\mathbf{a}_i\in\partial\Omega\}.
\end{aligned}
\end{equation*}
Here $\mathbf{n}$ is the unit outward normal vector to $\partial\Omega$. The periodic Sobolev spaces $H_{\text{per}}^1(\Omega)$, $\mathbf{H}_{\text{per}}(\text{div};\Omega)$, $\mathbf{H}_{\text{per}}(\text{curl};\Omega)$ are the closures of the spaces $C^\infty_{\text{per}}({\Omega})$, $C^\infty_{\text{div}}({\Omega})$, $C^\infty_{\text{curl}}({\Omega})$ with respect to the Sobolev space norms $\|\cdot\|_{H^1}$,$\|\cdot\|_{\mathbf{H}(\text{div};\Omega)}$, $\|\cdot\|_{\mathbf{H}(\text{curl};\Omega)}$ induced by the inner product mentioned above respectively. The periodic space $L^2_{\text{per}}(\Omega)$ is the closure of the space $C^\infty_{\text{per}}({\Omega})$ with respect to the norm $\|\cdot\|$.

Then the spaces for the weak formulation are given by
\begin{equation}\label{space 1}
Q:=H_{\text{per}}^1(\Omega)\quad\mbox{and}\quad
    \mathbf{V}:=\mathbf{H}_{\text{per}}(\text{curl};\Omega).
\end{equation}
For $\mathbf{u},\mathbf{v}\in \mathbf{V}$ and $q\in Q$, we define the sesquilinear forms
\begin{align*}
a(\mathbf{u},\mathbf{v};\mathbf{k})&:=\int_{\Omega}\epsilon(\mathbf{x})^{-1}(\nabla+i\mathbf{k})\times \mathbf{u}\cdot\overline{(\nabla+i\mathbf{k})\times \mathbf{v}}\,\mathrm{d}\mathbf{x},\\
b(q,\mathbf{u};\mathbf{k})&:=-\int_{\Omega}(\nabla+i\mathbf{k}) q\cdot \bar{\mathbf{u}}\,\mathrm{d}\mathbf{x},\\
    (\mathbf{u},\mathbf{v})&:=\int_{\Omega} \mathbf{u}\cdot\bar{\mathbf{v}}\,\mathrm{d}\mathbf{x}.
\end{align*}
Then the variational formulation to \eqref{strong} can be written in a mixed form: for a given $\mathbf{k}\in\mathcal{B}$, find non-trivial eigentriplets $\{(\lambda_n,\mathbf{u}_n,p_n)\}_{n=1}^{\infty}\subset(\mathbb{R},\mathbf{V},Q)$ such that
\begin{equation}\label{variational 2}
\left\{
\begin{aligned}
a(\mathbf{u}_n,\mathbf{v};\mathbf{k})+b(p_n,\mathbf{v};\mathbf{k})&=\lambda_n (\mathbf{u}_n,\mathbf{v}), &\forall v \in \mathbf{V},\\
\overline{b(q,\mathbf{u}_n;\mathbf{k})}&=0, &\forall q \in Q.
    \end{aligned}
\right.
\end{equation}
Finally, we introduce a regularity assumption for the finite element analysis \cite[Assumption $\operatorname{H}_0$]{boffi2006modified},
\begin{assumption}\label{ass:reg}
There exists $s>1/2$ such that $\mathbf{V}\cap \mathbf{H}_{\mathrm{per}}(\mathrm{div};\Omega)$ is continuously embedded in $H^{s}(\Omega)^3$. Moreover, for a given $\mathbf{k}\in\mathcal{B}$ and for any $\mathbf{f}\in L^2(\Omega)^3$, let $(\mathbf{u},p)\in(\mathbf{V},Q)$ satisfy
\begin{equation*}
\left\{
\begin{aligned}
a(\mathbf{u},\mathbf{v};\mathbf{k})+b(p,\mathbf{v};\mathbf{k})&= (\mathbf{f},\mathbf{v}), &\forall v \in \mathbf{V},\\
\overline{b(q,\mathbf{u};\mathbf{k})}&=0, &\forall q \in Q.
    \end{aligned}
\right.
\end{equation*}
Then there is $s>1/2$ and $r\in (0,1/2)$, such that $\mathbf{u}\in H^{s}(\Omega)^3$ and $\nabla\times\mathbf{u}\in H^{r}(\Omega)^3$.
\end{assumption}

\subsection{\texorpdfstring{$\mathbf{H}(\text{curl})$}{Lg}- conforming \texorpdfstring{$\mathbf{k}$}{Lg}-modified N\'{e}d\'{e}lec edge FEM}\label{edge FEM}
In this section we describe the discretization of problem \eqref{variational 2} for each wave vector $\mathbf{k}\in\mathcal{B}$ using the $\mathbf{H}(\text{curl})$- conforming $\mathbf{k}$-modified N\'{e}d\'{e}lec edge element method, which is stable with respect to $\mathbf{k}$.
Since the divergence of the curl of any vector is zero,  taking the divergence on both sides of the first equation in \eqref{strong} gives its second equation. Furthermore, the zero eigenvalue of \eqref{strong} has an infinite multiplicity
since the curl operator has an infinite-dimensional kernel. Thus, a discretization of the eigenvalue problem \eqref{strong} solely using the scalar node-based FEM may contain non-physical solutions (i.e., spurious modes \cite{sun1995spurious}). Indeed, most eigenvalue solvers are inefficient when dealing with such eigenvalue problems \cite{bai2000templates}.

To eliminate these spurious modes, the standard approach is to use the edge FEM and add the constraint \eqref{strong} explicitly \cite{lee1991tangential,sun1995spurious}. The Lagrange multiplier in the mixed form in \eqref{variational 2} is one common approach to add the divergence constraint to the system. Also, several methods have been proposed to eliminate this large null space \cite{dobson2000efficient,chou2019finite,huang2019isira,
lyu2021fame,lu2022parallel,lu2017discontinuous}, including the
special $\mathbf{H}(\text{curl})$-conforming $\mathbf{k}$-modified N\'{e}d\'{e}lec edge elements \cite{dobson2001analysis,boffi2006modified} and the mixed discontinuous Galerkin (DG) formulation with modified N\'{e}d\'{e}lec basis functions \cite{lu2017discontinuous}. The $\mathbf{H}(\text{curl})$- conforming $\mathbf{k}$-modified N\'{e}d\'{e}lec edge element method was first proposed and analyzed in \cite{dobson2000efficient,dobson2001analysis} under strong regularity restrictions and in the case of uniform mesh sequences, with an improved analysis under minimal assumptions on the regularity of the eigensolutions and on the mesh sequences \cite{boffi2006modified}. We recap the N\'{e}d\'{e}lec edge element method below.

Let $\mathcal{T}_h$ be a periodic, shape-regular, conformal tetrahedral mesh for $\Omega$ aligned with the possible discontinuities of $\alpha,\beta$, where the phrase ``periodic" means that the meshes are the same on each pair of the boundary.  Let $h=\max_{K\in\mathcal{T}_h}h_K$, where $h_K$ is the diameter of $K$. We denote the set of all edges by $\mathcal{E}$ and the set of all vertexes by $\mathcal{V}$. The conforming finite element spaces of \eqref{space 1} are \cite{boffi2006modified}
\begin{equation}\label{space 2}
    \begin{aligned}
        Q^\mathbf{k}_h&:=\{q\in Q:q|_K=e^{-i\mathbf{k}\cdot\mathbf{x}}\Tilde{q},\;\quad \forall \Tilde{q}\in P_{k+1}(K) \text{ and }\forall K\in\mathcal{T}_h\},\\
    \mathbf{V}^\mathbf{k}_h&:=\{\mathbf{v}\in \mathbf{V}:\mathbf{v}|_K=e^{-i\mathbf{k}\cdot\mathbf{x}}\Tilde{\mathbf{v}},\quad \forall \Tilde{\mathbf{v}}\in S_{k}(K)\;\;\text{ and }\forall K\in\mathcal{T}_h\},
    \end{aligned}
\end{equation}
where $P_k(K)$ is the space of local polynomials of total degree less than or equal to $k$ on $K$, and $S_k(K)$ is the space of vector polynomials with the form $\mathbf{a}(\mathbf{x})+\mathbf{b}(\mathbf{x})\times\mathbf{x}$ with $\mathbf{a},\mathbf{b}\in P_k(K)^3$.

Given $\mathbf{k}\in\mathcal{B}$, the discretization of problem \eqref{variational 2} reads: find a non-trivial eigentriplet $(\lambda_{n,h},\mathbf{u}_{n,h},p_{n,h})\in(\mathbb{R},\mathbf{V}_h^\mathbf{k},Q_h^\mathbf{k})$ with $(\mathbf{u}_{n,h},\lambda_{n,h})\neq(\mathbf{0},0)$, such that
\begin{equation}\label{discrete variational 2}
\left\{\begin{aligned}
a(\mathbf{u}_{n,h},\mathbf{v}; \mathbf{k})+b(p_{n,h},\mathbf{v};\mathbf{k})&=\lambda_{n,h} (\mathbf{u}_{n,h},\mathbf{v}),\quad\forall \mathbf{v} \in \mathbf{V}_h^\mathbf{k},\\
\overline{b(q,\mathbf{u}_{n,h};\mathbf{k})}&=0,\qquad\qquad\qquad\forall q \in Q_h^\mathbf{k}.
\end{aligned}\right.
\end{equation}
For $\mathbf{k}=\mathbf{0}$, one should make the spaces $\mathbf{V}$, $Q$ and their corresponding FEM spaces $\mathbf{V}^\mathbf{0}_h$, $Q^\mathbf{0}_h$ orthogonal to constant vectors or constants in order to guarantee the stability of \eqref{variational 2} and \eqref{discrete variational 2} \cite{boffi1999computational}.

\begin{theorem}[{\cite[Theorem 3]{boffi2006modified}}]
 Let $\lambda_n$ be an eigenvalue of problem \eqref{variational 2} with multiplicity $m$. Then there are $m$ discrete eigenvalues $\lambda_{n_1,h},\cdots,\lambda_{n_m,h}$ of problem \eqref{discrete variational 2} converging to $\lambda$. Moreover, with $\hat{\lambda}_n:=\frac{1}{m}\sum_{i=1}^m\lambda_{n_i,h}$, there is $h_0$ such that for any $0<h<h_0$ there holds \begin{equation}
       |\lambda_n-\hat{\lambda}_n|\leq C h^{2r},
   \end{equation}
where $C>0$ is a constant and $r$ is defined in Assumption \ref{ass:reg}.
\end{theorem}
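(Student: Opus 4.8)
The plan is to recast the continuous and discrete mixed eigenvalue problems \eqref{variational 2} and \eqref{discrete variational 2} as spectral problems for a pair of solution operators, and then to invoke the Babu\v{s}ka--Osborn theory for compact self-adjoint operators, with the rate supplied by the Boffi--Brezzi--Gastaldi framework for mixed eigenvalue problems. First I would introduce the solution operator $T:L^2(\Omega)^3\to\mathbf{V}$ by $T\mathbf{f}=\mathbf{u}$, where $(\mathbf{u},p)\in(\mathbf{V},Q)$ solves the mixed source problem of Assumption \ref{ass:reg} with right-hand side $\mathbf{f}$, together with the analogous discrete operator $T_h:L^2(\Omega)^3\to\mathbf{V}_h^\mathbf{k}$ built from \eqref{discrete variational 2}. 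A nonzero $\mathbf{u}_n$ solves \eqref{variational 2} with eigenvalue $\lambda_n$ precisely when $T\mathbf{u}_n=\lambda_n^{-1}\mathbf{u}_n$, and likewise for the discrete pair, so it suffices to compare the spectra of $T$ and $T_h$ in the $L^2$ inner product $(\cdot,\cdot)$.

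Second, I would record the structural properties of $T$. Since $a(\cdot,\cdot;\mathbf{k})$ is Hermitian and the saddle-point coupling through $b(\cdot,\cdot;\mathbf{k})$ is symmetric, $T$ is self-adjoint with respect to $(\cdot,\cdot)$ on the divergence-constrained subspace; moreover Assumption \ref{ass:reg} gives $T\mathbf{f}\in H^s(\Omega)^3$ with $s>1/2$, and the compact embedding $H^s(\Omega)^3\hookrightarrow\hookrightarrow L^2(\Omega)^3$ makes $T$ compact, with a discrete set of positive eigenvalues accumulating only at $0$. Consequently the qualitative part of the statement---that exactly $m$ discrete eigenvalues converge to a $\lambda_n$ of multiplicity $m$---follows from Babu\v{s}ka--Osborn as soon as $\|T-T_h\|_{\mathcal{L}(L^2)}\to 0$.

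Third, and this is the crux, I would establish the uniform-in-$h$ stability of the discrete mixed problem, i.e. the two Brezzi conditions for the pair $(\mathbf{V}_h^\mathbf{k},Q_h^\mathbf{k})$: the discrete inf-sup condition for $b(\cdot,\cdot;\mathbf{k})$, and the coercivity of $a(\cdot,\cdot;\mathbf{k})$ on the discrete kernel $\{\mathbf{v}_h\in\mathbf{V}_h^\mathbf{k}:b(q_h,\mathbf{v}_h;\mathbf{k})=0\ \forall q_h\in Q_h^\mathbf{k}\}$. Both hinge on the defining property of the $\mathbf{k}$-modified spaces \eqref{space 2}, namely the commuting de Rham relation $(\nabla+i\mathbf{k})Q_h^\mathbf{k}\subset\mathbf{V}_h^\mathbf{k}$, together with the discrete compactness property of N\'{e}d\'{e}lec edge elements. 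From uniform stability I would obtain a C\'{e}a-type quasi-optimality for the source problem, and the discrete compactness would then upgrade the pointwise convergence of $T_h\mathbf{f}$ to the operator-norm convergence $\|T-T_h\|_{\mathcal{L}(L^2)}\to 0$ needed above.

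Finally, for the rate I would use Osborn's doubling argument for self-adjoint problems: the error of the averaged eigenvalue $\hat\lambda_n$ is controlled by the \emph{square} of the best approximation of the eigenspace in the energy norm, up to consistency terms arising from the multiplier $p$ and the approximate divergence constraint. Since the eigenfunctions satisfy $\mathbf{u}\in H^s(\Omega)^3$ and $\nabla\times\mathbf{u}\in H^r(\Omega)^3$ with $r\in(0,1/2)$, the edge-element best-approximation error in the $\mathbf{H}(\mathrm{curl};\Omega)$ norm is $O(h^r)$, the curl regularity being the bottleneck; squaring yields $|\lambda_n-\hat\lambda_n|\lesssim h^{2r}$ once the consistency terms are shown, via the discrete inf-sup bound, to be of the same or higher order. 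The main obstacle is the third step: verifying the Brezzi conditions uniformly in $h$ and the discrete compactness of the $\mathbf{k}$-modified spaces, without which pointwise convergence cannot be promoted to operator-norm convergence and spurious modes may pollute the computed spectrum.
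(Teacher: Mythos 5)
The paper does not prove this statement at all: it is quoted verbatim from the cited reference (Theorem~3 of \cite{boffi2006modified}), so there is no in-paper proof to compare against. Your outline reproduces the strategy of that reference faithfully --- reduction to solution operators $T$, $T_h$, compactness of $T$ from the embedding $\mathbf{V}\cap\mathbf{H}_{\mathrm{per}}(\mathrm{div};\Omega)\hookrightarrow H^s(\Omega)^3$ with $s>1/2$, uniform discrete stability plus the discrete compactness property of the ($\mathbf{k}$-modified) N\'{e}d\'{e}lec spaces to upgrade pointwise to norm convergence of $T_h\to T$, and the Babu\v{s}ka--Osborn doubling argument giving the rate $h^{2r}$ with $r\in(0,1/2)$ as the bottleneck from $\nabla\times\mathbf{u}\in H^r(\Omega)^3$. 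Two minor remarks: for eigenpairs with $\lambda_n\neq 0$ the multiplier satisfies $p_n=0$ (test the first equation with $\mathbf{v}=(\nabla+i\mathbf{k})q$), which disposes of the ``consistency terms arising from the multiplier'' you flag in the last step; and the doubling of the rate is not automatic for general mixed eigenproblems, but holds here precisely because the problem restricted to the divergence-constrained kernel is symmetric and elliptic --- a point worth stating explicitly rather than leaving implicit in ``Osborn's doubling argument.''
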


\subsection{Gradient of \texorpdfstring{$\lambda_n(\mathbf{k})$}{Lg} in the parameter domain $\mathcal{B}$}\label{section gradient}
Next, we derive the formula of the gradient $\partial_j\lambda_n:=\frac{\partial\lambda_n}{\partial k_j}$ for $n\in\mathbb{N}_+$ and $j=1,2,3$, and provide its numerical approximation. Recall that $(e_j)_{j=1,2,3}$ denotes the canonical basis in $\mathbb{R}^3$.

\begin{theorem}
Let the non-trivial eigentriplet $(\lambda_n(\mathbf{k}),\mathbf{u}_n(\mathbf{k}),p(\mathbf{k}))\in(\mathbb{R},\mathbf{V},Q)$ be the solution to the weak formulation \eqref{variational 2} for any $\mathbf{k}\in\mathcal{B}$. Then there holds
\begin{align}\label{eq:partial-lambda}
\partial_j\lambda_n=-2\Im\Big(\int_{\Omega}\epsilon^{-1}(e_j\times\mathbf{u}_n)\cdot\overline{(\nabla+i
\mathbf{k})\times\mathbf{u}_n}\,\mathrm{d}\mathbf{x}\Big),\quad j=1,2,3.
\end{align}
\end{theorem}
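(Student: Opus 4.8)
The plan is to read \eqref{eq:partial-lambda} as a Hellmann--Feynman (first-order perturbation) identity: reduce the eigenvalue to a Rayleigh quotient at a fixed $\mathbf{k}$, differentiate in $k_j$, and show that the contributions coming from the variation of the eigenfunction cancel, leaving only the explicit dependence of the form on $\mathbf{k}$. I would argue at a point $\mathbf{k}\notin\mathbf{S}$, where by Theorem~\ref{thm:lipchitz} the band function $\lambda_n$ is analytic and the eigenvalue is simple, so that analytic perturbation theory supplies a locally differentiable normalized branch $(\lambda_n(\mathbf{k}),\mathbf{u}_n(\mathbf{k}))$ with $(\mathbf{u}_n,\mathbf{u}_n)=1$. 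I would also record at the outset that the right-hand side of \eqref{eq:partial-lambda} is unchanged under a phase rotation $\mathbf{u}_n\mapsto e^{i\theta}\mathbf{u}_n$, so the formula does not depend on the gauge choice of the branch.

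A preliminary reduction I would carry out is that the Lagrange multiplier vanishes, $p_n=0$. Testing the first line of \eqref{variational 2} with $\mathbf{v}=(\nabla+i\mathbf{k})\phi$ for $\phi\in Q$ (which lies in $\mathbf{V}$, since the twisted curl of a twisted gradient is zero) annihilates the $a$-term, and using the constraint $\overline{b(q,\mathbf{u}_n;\mathbf{k})}=0$ to annihilate the right-hand side leaves $b(p_n,(\nabla+i\mathbf{k})\phi;\mathbf{k})=0$ for all $\phi$; taking $\phi=p_n$ forces $(\nabla+i\mathbf{k})p_n=0$, hence $p_n=0$ for $\mathbf{k}\neq\mathbf{0}$. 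The eigenpair then satisfies $a(\mathbf{u}_n,\mathbf{v};\mathbf{k})=\lambda_n(\mathbf{u}_n,\mathbf{v})$ for all $\mathbf{v}\in\mathbf{V}$, and testing with $\mathbf{v}=\mathbf{u}_n$ together with the normalization gives the Rayleigh identity $\lambda_n=a(\mathbf{u}_n,\mathbf{u}_n;\mathbf{k})$.

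Next I would differentiate this identity in $k_j$, splitting the derivative into the explicit dependence of $a(\cdot,\cdot;\mathbf{k})$ on $\mathbf{k}$ and the implicit dependence through $\mathbf{u}_n$, writing $\dot{\mathbf{u}}_n:=\partial_j\mathbf{u}_n$. The implicit part is $a(\dot{\mathbf{u}}_n,\mathbf{u}_n;\mathbf{k})+a(\mathbf{u}_n,\dot{\mathbf{u}}_n;\mathbf{k})$: testing the weak form with $\mathbf{v}=\dot{\mathbf{u}}_n$ gives $a(\mathbf{u}_n,\dot{\mathbf{u}}_n;\mathbf{k})=\lambda_n(\mathbf{u}_n,\dot{\mathbf{u}}_n)$, and the Hermitian symmetry $a(\dot{\mathbf{u}}_n,\mathbf{u}_n;\mathbf{k})=\overline{a(\mathbf{u}_n,\dot{\mathbf{u}}_n;\mathbf{k})}$ with $\lambda_n\in\mathbb{R}$ turns the sum into $\lambda_n\big[(\dot{\mathbf{u}}_n,\mathbf{u}_n)+(\mathbf{u}_n,\dot{\mathbf{u}}_n)\big]=\lambda_n\,\partial_j(\mathbf{u}_n,\mathbf{u}_n)=0$, since the normalization is constant. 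Only the explicit form-derivative survives, and this I would compute directly: because $(\nabla+i\mathbf{k})\times\mathbf{u}_n=\nabla\times\mathbf{u}_n+i\,\mathbf{k}\times\mathbf{u}_n$, its explicit $k_j$-derivative with $\mathbf{u}_n$ held fixed is $i\,e_j\times\mathbf{u}_n$. Writing $z=\epsilon^{-1}(e_j\times\mathbf{u}_n)\cdot\overline{(\nabla+i\mathbf{k})\times\mathbf{u}_n}$, the product rule applied to $\int_\Omega\epsilon^{-1}|(\nabla+i\mathbf{k})\times\mathbf{u}_n|^2\,\mathrm{d}\mathbf{x}$ produces the pointwise integrand $i(z-\bar z)=-2\,\Im z$ (using that $\epsilon^{-1}$ is real), which integrates to exactly \eqref{eq:partial-lambda}.

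I expect the main obstacle to be the differentiability input rather than the algebra: one must justify that $\mathbf{u}_n(\mathbf{k})$ possesses a $\mathbf{V}$-valued derivative $\dot{\mathbf{u}}_n$, so that $\mathbf{v}=\dot{\mathbf{u}}_n$ is an admissible test function. This is precisely where simplicity of $\lambda_n$ off the degeneracy set $\mathbf{S}$ is needed, through Kato's analytic perturbation theory for the self-adjoint family generated by $a(\cdot,\cdot;\mathbf{k})$; at points of $\mathbf{S}$ the band crossings render $\lambda_n$ merely Lipschitz and the pointwise derivative may fail to exist, which is consistent with $\mathbf{S}$ having zero Lebesgue measure. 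A secondary technical point is the exclusion of $\mathbf{k}=\mathbf{0}$, handled exactly as in the stability discussion following \eqref{discrete variational 2}.
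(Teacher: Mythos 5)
Your proposal is correct and takes essentially the same route as the paper: both are Hellmann--Feynman arguments that reduce $\partial_j\lambda_n$ to the explicit $\mathbf{k}$-derivative of the sesquilinear form $a(\cdot,\cdot;\mathbf{k})$ evaluated at $(\mathbf{u}_n,\mathbf{u}_n)$ --- the paper by differentiating the weak eigenvalue equation in $k_j$ and invoking the solvability condition $f(\mathbf{u}_n)=0$ so that the $\partial_j\mathbf{u}_n$ contributions drop, you by differentiating the Rayleigh identity $\lambda_n=a(\mathbf{u}_n,\mathbf{u}_n;\mathbf{k})$ and cancelling the same contributions via Hermitian symmetry and the constant normalization --- and the closing algebra producing $-2\Im(\cdot)$ is identical. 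Your preliminary reduction $p_n=0$ and the explicit appeal to analytic perturbation theory away from the singular set $\mathbf{S}$ are welcome details that the paper leaves implicit or delegates to an external citation.
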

\begin{proof}
We abbreviate $\lambda_n(\mathbf{k})$ and $\mathbf{u}_n(\mathbf{k})$ to $\lambda$ and $\mathbf{u}$.
By definition,  for all $\mathbf{v}\in \mathbf{V}$, there holds
\begin{equation*}
\begin{aligned}
    a(\mathbf{u},\mathbf{v};\mathbf{k})&=\int_{\Omega}\epsilon^{-1}(\nabla+i\mathbf{k})\times \mathbf{u}\cdot\overline{(\nabla+i\mathbf{k})\times \mathbf{v}}\,\mathrm{d}\mathbf{x}\\
    &=\int_{\Omega}\epsilon^{-1}(\nabla+i\mathbf{k})\times \mathbf{u}\cdot(\nabla-i\mathbf{k})\times \overline{\mathbf{v}}\,\mathrm{d}\mathbf{x}\\
&=\int_{\Omega}\epsilon^{-1}(\nabla\times\mathbf{u}+i\mathbf{k}\times \mathbf{u})\cdot(\nabla\times\overline{\mathbf{v}}-i\mathbf{k}\times \overline{\mathbf{v}})\,\mathrm{d}\mathbf{x}\\
&=\int_{\Omega}\epsilon^{-1}(\nabla\times\mathbf{u})\cdot(\nabla\times\overline{\mathbf{v}})\,\mathrm{d}\mathbf{x}-\int_{\Omega}\epsilon^{-1}(\nabla\times\mathbf{u})\cdot(i\mathbf{k}\times\overline{\mathbf{v}})\,\mathrm{d}\mathbf{x}\\
&\quad+\int_{\Omega}\epsilon^{-1}(i\mathbf{k}\times\mathbf{u})\cdot(\nabla\times\overline{\mathbf{v}})\,\mathrm{d}\mathbf{x}+\int_{\Omega}\epsilon^{-1}(\mathbf{k}\times\mathbf{u})\cdot(\mathbf{k}\times\overline{\mathbf{v}})\,\mathrm{d}\mathbf{x}.
\end{aligned}
\end{equation*}
Upon utilizing the formula for $j=1,2,3$,
\begin{equation*}
\partial_j(\mathbf{k}\times \mathbf{w})=e_j\times\mathbf{w}+\mathbf{k}\times \partial_j\mathbf{w},\quad \forall \mathbf{w}\in \mathbf{V},
\end{equation*}
and taking the partial derivative of \eqref{variational 2} with respect to $k_j$, we obtain
\begin{equation*}
    a(\partial_j\mathbf{u},\mathbf{v};\mathbf{k})-\lambda (\partial_j\mathbf{u},\mathbf{v})=f(\mathbf{v}),
\end{equation*}
with $f(\mathbf{v};\mathbf{k}):=-a_2^j(\mathbf{u},\mathbf{v})-a_3^j(\mathbf{u},\mathbf{v})-a_4^j(\mathbf{u},\mathbf{v};\mathbf{k})+\partial_j\lambda (\mathbf{u},\mathbf{v})$, where $a_2^j(\cdot,\cdot), a_3^j(\cdot,\cdot)$ and $a_4^j(\cdot,\cdot)$ are defined as
\begin{equation*}
	\begin{aligned}
		 a_2^j(\mathbf{u},\mathbf{v})&:=-\int_{\Omega}\epsilon^{-1}(\nabla\times\mathbf{u})\cdot(ie_j\times\overline{\mathbf{v}})\,\mathrm{d}\mathbf{x},\\
		 a_3^j(\mathbf{u},\mathbf{v})&:=\int_{\Omega}\epsilon^{-1}(ie_j\times\mathbf{u})\cdot(\nabla\times\overline{\mathbf{v}})\,\mathrm{d}\mathbf{x},\\
       a_4^j(\mathbf{u},\mathbf{v};\mathbf{k})&:=\int_{\Omega}\epsilon^{-1}(e_j\times\mathbf{u})\cdot(\mathbf{k}\times\overline{\mathbf{v}})\,\mathrm{d}\mathbf{x}+\int_{\Omega}\epsilon^{-1}(\mathbf{k}\times\mathbf{u})\cdot(e_j\times\overline{\mathbf{v}})\,\mathrm{d}\mathbf{x}.
       \end{aligned}
\end{equation*}
By repeating the argument for \cite[Theorem 3.6]{wang2023dispersion}, we can derive the derivatives of the eigenvalue $\lambda$ with respect to $\mathbf{k}$ through $f(\mathbf{u})=0$, and arrive at
\begin{equation} \label{partial 1}
\partial_j\lambda=a_2^j(\mathbf{u},\mathbf{u})+a_3^j(\mathbf{u},\mathbf{u})+a_4^j(\mathbf{u},\mathbf{u};\mathbf{k}),
\end{equation}
since the eigenfunction is normalized, i.e., $(\mathbf{u},\mathbf{u})=1$.
Following equation \eqref{partial 1}, we find
\begin{equation*}
	\begin{aligned}
\partial_j\lambda&=\int_{\Omega}\epsilon^{-1}\left(-(\nabla\times\mathbf{u})\cdot(ie_j\times\overline{\mathbf{u}})+(ie_j\times\mathbf{u})\cdot(\nabla\times\overline{\mathbf{u}})+(e_j\times\mathbf{u})\cdot(\mathbf{k}\times\overline{\mathbf{u}})+(\mathbf{k}\times\mathbf{u})\cdot(e_j\times\overline{\mathbf{u}})\right)\,\mathrm{d}\mathbf{x}\\
		 &=\int_{\Omega}\epsilon^{-1}\left((e_j\times\mathbf{u})\cdot(i\nabla+\mathbf{k})\times\overline{\mathbf{u}}+(e_j\times\overline{\mathbf{u}})\cdot(-i\nabla+\mathbf{k})\times\mathbf{u}\right)\,\mathrm{d}\mathbf{x}\\
		&=\int_{\Omega}\epsilon^{-1} \left((e_i\times\mathbf{u})\cdot(i\nabla+\mathbf{k})\times\overline{\mathbf{u}}+\overline{(e_i\times\mathbf{u})\cdot(i\nabla+\mathbf{k})\times\overline{\mathbf{u}}}\right)\,\mathrm{d}\mathbf{x}\\
		 &=2\Re\left(\int_{\Omega}\epsilon^{-1}(e_j\times\mathbf{u})\cdot(i\nabla+\mathbf{k})\times\overline{\mathbf{u}}\,\mathrm{d}\mathbf{x}\right)\\
		 &=-2\Im\left(\int_{\Omega}\epsilon^{-1}(e_j\times\mathbf{u})\cdot\overline{(\nabla+i\mathbf{k})\times\mathbf{u}}\,\mathrm{d}\mathbf{x}\right).
	\end{aligned}
\end{equation*}
This completes the proof.
\end{proof}

The use of modified basis functions \eqref{space 2} greatly simplifies the computation of $\partial_j\lambda_n$, and hence reduce the computational complexity. For example, for the lowest order N\'{e}d\'{e}lec FEM space with $k=0$ in \eqref{space 2}, the degrees of freedom for $\mathbf{V}_h^{\mathbf{k}}$ are
	\begin{equation*}
		\ell_{e,\mathbf{k}}(\mathbf{v}):=\int_{\mathbf{x}_e}^{\mathbf{y}_e}e^{i\mathbf{k}\cdot(x-\mathbf{z}
			_e)}\mathbf{v}\cdot\mathbf{t}_e \,\mathrm{d}s,\quad \text{for all }e\in\mathcal{E}\text{ and }\mathbf{v}\in \mathbf{V}_h^{\mathbf{k}}.
	\end{equation*}
	Here, $\mathbf{z}_e:=(\mathbf{x}_e+\mathbf{y}_e)/2$ denotes the midpoint of edge $e\in\mathcal{E}$, identified with the ordered vertices $(\mathbf{x}_e,\mathbf{y}_e)$, and $\mathbf{t}_e:=(\mathbf{y}_e-\mathbf{x}_e)/\|\mathbf{y}_e-\mathbf{x}_e\|$ represents the unit tangent vector of $e$. Hence, $\mathbf{V}_h^{\mathbf{k}}$ of the lowest order can be expressed as
	\begin{equation*}
		\mathbf{V}_h^{\mathbf{k}}:=\text{span}\{\psi_{e,\mathbf{k}}=e^{-i\mathbf{k}\cdot(x-\mathbf{z}_e)}\psi_{e,\mathbf{0}}:e\in\mathcal{E}\},
	\end{equation*}
	with $\ell_{e,\mathbf{k}}(\psi_{e,\mathbf{k}})=1$ and $\ell_{e,\mathbf{0}}(\psi_{e,\mathbf{0}})=1$. Similarly, $Q_h^{\mathbf{k}}$ of the lowest-order can be represented by
 \begin{align*}
 Q_h^{\mathbf{k}}:=\text{span}\{q_{v,\mathbf{k}}=e^{-i\mathbf{k}\cdot(x-\mathbf{x}_v)}q_{v,\mathbf{0}}:v\in\mathcal{V}\},
 \end{align*}
with $\ell_{v,\mathbf{k}}(q_{v,\mathbf{k}})=1$ and $\ell_{v,\mathbf{0}}(q_{v,\mathbf{0}})=1$, which has the vertex-based degrees of freedom defined as $\ell_{v,\mathbf{k}}(q):=e^{i\mathbf{k}\cdot(x-\mathbf{x}_v)}q(\mathbf{x}_v)$, for all $v\in\mathcal{V}$, $\mathbf{x}_v$ is the coordinate of a vertex in $\mathcal{V}$.
	
With the basis functions defined above, we obtain
\begin{align*}  a(\psi_{e_1,\mathbf{k}},\psi_{e_2,\mathbf{k}};\mathbf{k})&=e^{i\mathbf{k}\cdot(\mathbf{z}_{e_1}-\mathbf{z}_{e_2})}\int_{\Omega}\epsilon^{-1}\nabla\times \psi_{e_1,\mathbf{0}}\cdot\overline{\nabla\times \psi_{e_2,\mathbf{0}}}\,\mathrm{d}\mathbf{x},\\
			b(q_{v,\mathbf{k}},\psi_{e,\mathbf{k}};\mathbf{k})&:=-e^{i\mathbf{k}\cdot(\mathbf{x}_{v}-\mathbf{z}_{e})}\int_{\Omega}\nabla q_{v,\mathbf{0}}\cdot \overline{\psi_{e,\mathbf{0}}}\,\mathrm{d}\mathbf{x},\\
			(\psi_{e_1,\mathbf{k}},\psi_{e_2,\mathbf{k}})&:=e^{i\mathbf{k}\cdot(\mathbf{z}_{e_1}-\mathbf{z}_{e_2})}\int_{\Omega} \psi_{e_1,\mathbf{0}}\cdot\overline{\psi_{e_2,\mathbf{0}}}\,\mathrm{d}\mathbf{x}.
\end{align*}
Together with \eqref{eq:partial-lambda}, we obtain
\begin{align*} \partial_j\lambda_{n,h}=\int_{\Omega}\epsilon^{-1}(e_i\times\psi_{e_1,\mathbf{k}})\cdot\overline{(\nabla+i\mathbf{k}) \times\psi_{e_2,\mathbf{k}}}\,\mathrm{d}\mathbf{x}& =e^{i\mathbf{k}\cdot(\mathbf{z}_{e_1}-\mathbf{z}_{e_2})}\int_{\Omega}\epsilon^{-1}(e_i\times\psi_{e_1,\mathbf{0}}) \cdot\overline{\nabla\times\psi_{e_2,\mathbf{0}}}\,\mathrm{d}\mathbf{x}.
\end{align*}
Thus, the use of the modified basis functions separates the integral with the parameter $\mathbf{k}$, transforming it into a standalone exponential function during the computation. This separation streamlines evaluating the integrals, making it a conventional integration involving parameter-free polynomials.

\section{\texorpdfstring{$hp$}{Lg}-adaptive sampling method in the parameter domain $\mathcal{B}$}\label{sec:hp-sampling}
In this section, we generalize the $hp$-adaptive sampling method \cite{wang2023dispersion2} from 2D PhCs to 3D PhCs for the purpose of computing the $\ell$th and $\ell+1$th band functions with $1\leq \ell\in \mathbb{N}$. We introduce in Section \ref{Mesh design} an adaptive mesh in the parameter domain $\mathcal{B}$ which refines the elements containing singularity points, and propose in Section \ref{element-wise interpolation} an element-wise interpolation operator to reconstruct the target band functions.
\subsection{Adaptive mesh generation}\label{Mesh design}
We first recall the definition of a regular mesh without hanging nodes \cite{schwab1998p}. Let $\mathcal{B}\subset \mathbb{R}^3$ be a polygon with straight sides and a mesh $\mathcal{T}=\{T_i\}_{i=1}^M$ be a partition of $\mathcal{B}$ into $M$ open, disjoint tetrahedrons $T_i$ with $\overline{D}=\bigcup_{i=1}^M\overline{T_i}$. For each $T\in \mathcal{T}$, we denote by $h_T$ its diameter and $\rho_T$ the diameter of its largest inscribed ball.
\begin{definition}\label{regular mesh}
 A mesh $\mathcal{T}=\{T_i\}_{i=1}^M$ is said to be a regular mesh without hanging nodes if it satisfies the following conditions.
(i) For $i\neq j$, $\overline{T_i}\bigcap\overline{T_j}$ is either empty or it consists of a vertex or an entire edge or an entire face of $T_i$ and (ii) there exists a constant $C>0$ such that $h_T\leq C\rho_T$.
\end{definition}
Now we develop a strategy to generate a family of nested regular tetrahedral meshes $\mathcal{T}_n$, $n=1,2,\cdots$, such
that $\mathcal{T}_{n+1}$ is a refinement of $\mathcal{T}_n$. To this end, we introduce several notations for the given regular tetrahedral mesh $\mathcal{T}_n$. The mesh size of $\mathcal{T}_n$ is defined as $h_n:=\max_{T\in\mathcal{T}_n}h_T$. The collection of all the faces, edges, and vertices over $\mathcal{T}_n$ is denoted as $\mathcal{F}_n$, $\mathcal{E}_n$ and $\mathcal{V}_n$, respectively. For any $T\in\mathcal{T}_n$, we define its faces as $\mathcal{F}_T:=\partial T\bigcap \mathcal{F}_n$, and the $j$th face is denoted by $F_T^j$ for $j=1,2,3,4$. Similarly, we define its edges and vertices as $\mathcal{E}_T:=\partial T\bigcap \mathcal{E}_n$ and $\mathcal{V}_T:=\partial T\bigcap \mathcal{V}_n$, and the $j$th edge and vertex are denoted by $E_T^j$ for $j=1,\cdots,6$, and $V_T^j$ for $j=1,2,3,4$. For any $F\in\mathcal{F}_n$, we define the collection of elements having $F$ as a face as $\mathcal{T}_F:=\bigcup\{T\in\mathcal{T}_n:F\in\mathcal{F}_T\}$ and the collection of elements having $E$ as an edge as $\mathcal{T}_E:=\bigcup\{T\in\mathcal{T}_n:E\in\mathcal{E}_T\}$. Finally, we define $P_m(T)$ as the space of polynomial functions with total degrees at most $m\in\mathbb{N}^+$ on element $T$, and $P_m(F)$, $P_m(E)$ as the space of polynomial functions of degrees at most $m\in\mathbb{N}^+$ on each face $F\in\mathcal{F}_n$ and each edge $E\in\mathcal{E}_n$, respectively.

Theorem \ref{thm:lipchitz} implies that each $\lambda_n(\mathbf{k})$ is a piecewise analytic function of the wave vector $\mathbf{k}$, with singular points at the branch points. Our mesh design strategy aims to identify elements within $\mathcal{T}_n$ where $\lambda_{\ell}$ and $\lambda_{\ell+1}$ are not smooth. To accomplish this, we introduce an element-wise indicator
\begin{equation}\label{indicator1}
\eta(T):=\min_{\substack{\max\{1,\ell-1\}\leq q \leq \ell+1\\\mathbf{k}\in\mathcal{V}_T}}\left\{|\lambda_q-\lambda_{q+1}| \right\}\quad \text{for }T\in \mathcal{T}_n,
\end{equation}
 which uses the adjacent values on the vertices of each tetrahedron element to ascertain the regularity of the eigenvalues as functions of $\mathbf{k}$ within each element. Furthermore, the element-wise tolerance is
\begin{equation}\label{tolerance}
     \operatorname{tol}_1(T)=\kappa h_T C_{\max,\ell}(T),
 \end{equation}
where $\kappa\geq 2\sqrt{3} $ is a constant and
\begin{align*}
C_{\max,\ell}(T):=\sup_{\substack{\max\{1,\ell-1\}\leq q \leq \ell+1\\ \mathbf{k}\in T\backslash\mathbf{S}}}\left|\nabla\lambda_q(\mathbf{k})\right|
\end{align*}
is an upper bound of the gradients of $\lambda_{q}(\mathbf{k})$ with $\max\{1,\ell-1\}\leq q \leq \ell+1$.

\begin{remark}
In the numerical experiments, we take $\kappa=2\sqrt{2}$, which represents a more relaxed tolerance. It reduces the computational cost, while ensuring the expected convergence rate and controlling the maximum error within $\mathcal{M}_n$.
\end{remark}

Give a mesh $\mathcal{T}_n$, the collection of marked elements is given by
\begin{align}\label{eq:mark}
\mathcal{M}_n:=\cup\left\{
T\in\mathcal{T}_n: \eta(T)\leq \operatorname{tol}_1(T)
\right\}.
\end{align}
Then we apply the iterated longest edge bisection algorithm (LEB) \cite{rivara-1992} to divide each marked element $T\in\mathcal{M}_n$ into two children elements, which typically follows two steps.  First, creating a new vertex at the midpoint of its longest edge and then dividing this tetrahedron by the plane that passes through this new vertex and the two vertices opposite the longest edge. Second, performing additional bisection to avoid any hanging nodes by means of recursive algorithm, which was proven to be finite \cite{rivara-1992}.

We summarize the proposed mesh refinement procedure in Algorithm \ref{alg1}, where the parameters $\operatorname{tol}_2$
and $n_{\max}^1$ denote the smallest allowed element size and the maximum number of iterations, respectively.
\begin{algorithm}[H]
\caption{Adaptive mesh refinement}\label{alg1}
\begin{algorithmic}
\Require initial mesh $\mathcal{T}_1$,
tolerance $\operatorname{tol}_2$, and maximum loop number $n_{\max}^1$
\Ensure $\mathcal{T}_n$
\State $n \gets 1$
\While{$n \leq n_{\max}^1$}

\State Set $\mathcal{M}_n=\emptyset$
\For{$T\in\mathcal{T}_n$}

\If{$\eta(T)\leq \operatorname{tol}_1(T)$ and $h_T\geq \operatorname{tol}_2$}
\State $\mathcal{M}_n=\mathcal{M}_n\bigcup\{T\}$
\EndIf
\EndFor

Refine the elements in $\mathcal{M}_n$ by iterated LEB algorithm and adjust the mesh until there are no hanging nodes.

$n \gets n+1$
\EndWhile
\State $n_{\operatorname{iter}}:=n-1$
\end{algorithmic}
\end{algorithm}
Next, we show that all elements containing singular points are marked for refinement.
For the mesh $\mathcal{T}_n$ at the $n$th iteration, let
\begin{align*}
\mathcal{T}_n^R:=\cup\left\{T\in \mathcal{T}_n:T\cap \mathbf{S}=\emptyset\right\}\text{ and }
\mathcal{T}_n^S:=\mathcal{T}_n\backslash \mathcal{T}_n^R
\end{align*}
be a partition of the mesh $\mathcal{T}_n$ based upon the singular point set $\mathbf{S}$.
\begin{theorem}[$\mathcal{T}_n^S\subset \mathcal{M}_n$]\label{refining mechanism}
 Algorithm \ref{alg1} with the element-wise quantity \eqref{indicator1} and the tolerance \eqref{tolerance} guarantees that all elements containing branch points are marked for refinement.
\end{theorem}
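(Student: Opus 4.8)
The plan is to prove the inclusion $\mathcal{T}_n^S\subset\mathcal{M}_n$ directly: I fix an arbitrary element $T\in\mathcal{T}_n^S$ and show its indicator falls below the tolerance, so that $T\in\mathcal{M}_n$ by the marking rule \eqref{eq:mark}. Since $T\in\mathcal{T}_n^S$ means $T\cap\mathbf{S}\neq\emptyset$, there is a branch point $\mathbf{k}^\ast\in T$. Because we are reconstructing $\lambda_\ell$ and $\lambda_{\ell+1}$, the relevant degeneracies are crossings among the bands $\lambda_{\ell-1},\dots,\lambda_{\ell+2}$, so at $\mathbf{k}^\ast$ two consecutive such bands coincide, i.e. $\lambda_{q^\ast}(\mathbf{k}^\ast)=\lambda_{q^\ast+1}(\mathbf{k}^\ast)$ for some index $q^\ast$ with $\max\{1,\ell-1\}\leq q^\ast\leq \ell+1$. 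The key structural observation is that $\eta(T)$ in \eqref{indicator1} is a minimum over both the index $q$ and the vertices $\mathbf{k}\in\mathcal{V}_T$; hence it suffices to exhibit a single admissible index and a single vertex at which the band gap is small.

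First I would set $g:=\lambda_{q^\ast}-\lambda_{q^\ast+1}$, so that $g(\mathbf{k}^\ast)=0$, and estimate $|g(V)|$ at a vertex $V\in\mathcal{V}_T$. Writing $|g(V)|=|g(V)-g(\mathbf{k}^\ast)|$ and invoking the Lipschitz continuity from Theorem \ref{thm:lipchitz}, I would bound the increment of each band along the segment joining $\mathbf{k}^\ast$ to $V$. Concretely, since $\overline{T}$ is convex and $\mathbf{S}\cap T$ has zero Lebesgue measure, each $\lambda_q$ is (by Rademacher's theorem) differentiable almost everywhere with $|\nabla\lambda_q|\leq C_{\max,\ell}(T)$ on $T\setminus\mathbf{S}$, and is absolutely continuous along almost every line segment in $T$; integrating the gradient then yields $|\lambda_q(V)-\lambda_q(\mathbf{k}^\ast)|\leq C_{\max,\ell}(T)\,|V-\mathbf{k}^\ast|\leq C_{\max,\ell}(T)\,h_T$. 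Applying this to both $q^\ast$ and $q^\ast+1$ and using the triangle inequality gives
\begin{equation*}
\eta(T)\leq |g(V)|\leq |\lambda_{q^\ast}(V)-\lambda_{q^\ast}(\mathbf{k}^\ast)|+|\lambda_{q^\ast+1}(V)-\lambda_{q^\ast+1}(\mathbf{k}^\ast)|\leq 2\,C_{\max,\ell}(T)\,h_T.
\end{equation*}
Finally, since $\kappa\geq 2\sqrt{3}>2$, the right-hand side is dominated by $\kappa\,h_T\,C_{\max,\ell}(T)=\operatorname{tol}_1(T)$ from \eqref{tolerance}, so $\eta(T)\leq\operatorname{tol}_1(T)$ and $T\in\mathcal{M}_n$. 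As $T\in\mathcal{T}_n^S$ was arbitrary, this proves $\mathcal{T}_n^S\subset\mathcal{M}_n$, and together with the guarantee that the iterated LEB refinement in Algorithm \ref{alg1} actually subdivides every marked element, every element containing a branch point is refined.

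The step I expect to be the main obstacle is the rigorous passage from the almost-everywhere gradient bound on $T\setminus\mathbf{S}$ to the pointwise increment bound $|\lambda_q(V)-\lambda_q(\mathbf{k}^\ast)|\leq C_{\max,\ell}(T)\,|V-\mathbf{k}^\ast|$, because the straight segment from $\mathbf{k}^\ast$ to $V$ may pass through the singular set $\mathbf{S}$ where $\nabla\lambda_q$ is undefined. The clean way around this is to use that a Lipschitz function on a convex set has Lipschitz constant equal to the essential supremum of its gradient norm; alternatively one selects the vertex $V$ (or perturbs the endpoint within $T$ and passes to the limit using continuity) so that the connecting segment meets the measure-zero set $\mathbf{S}$ only in a negligible set, legitimizing the fundamental theorem of calculus along that segment. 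I would also make explicit at the outset that the three band-gap differences appearing in \eqref{indicator1} exhaust all crossings that can render $\lambda_\ell$ or $\lambda_{\ell+1}$ non-analytic, which is what ties the abstract singular set $\mathbf{S}$ to the indicator; note that the estimate only requires $\kappa\geq 2$, so the prescribed $\kappa\geq 2\sqrt{3}$ leaves comfortable slack (indeed a sharper estimate using the distance from $\mathbf{k}^\ast$ to the nearest vertex would reduce the required constant further).
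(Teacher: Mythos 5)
Your argument is correct and is essentially the paper's own proof, which is not written out here but simply deferred to the two-dimensional analogue in \cite[Theorem 3.2]{wang2023dispersion2}: fix $T$ with $T\cap\mathbf{S}\neq\emptyset$, identify the crossing $\lambda_{q^\ast}(\mathbf{k}^\ast)=\lambda_{q^\ast+1}(\mathbf{k}^\ast)$ with $q^\ast$ in the admissible range, and propagate it to a vertex via the Lipschitz bound $|\lambda_q(V)-\lambda_q(\mathbf{k}^\ast)|\leq C_{\max,\ell}(T)\,h_T$ to get $\eta(T)\leq 2C_{\max,\ell}(T)h_T\leq\operatorname{tol}_1(T)$. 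Your handling of the measure-zero singular set via the essential-supremum characterization of the Lipschitz constant on a convex set, and your remark that the indicator's three gap differences exhaust the crossings affecting $\lambda_\ell$ and $\lambda_{\ell+1}$, are exactly the points that need to be made explicit, and your observation that the crude bound only needs $\kappa\geq 2$ is consistent with the paper's use of $\kappa=2\sqrt{2}$ in practice.
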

\begin{proof}
The proof follows directly from that of \cite[Theorem 3.2]{wang2023dispersion2}.
\end{proof}
In practice, we use a practical tolerance given by
\begin{align}\label{eq:tol1-num}
\Hat{C}_{\max,\ell}(T):=\max_{\substack{\max\{1,\ell-1\}\leq q \leq \ell+1\\\mathbf{k}\in\mathcal{V}_T}}\left\{\left|\nabla\lambda_q(\mathbf{k})\right|\right\},
\end{align}
as a substitute for $C_{\max,\ell}(T)$ which only requires $\partial_j\lambda_q$ for $i=1,2,3$ and $\max\{1,\ell-1\}\leq q \leq \ell+1$ over $\mathcal{V}_T$, which can be obtained from \eqref{eq:partial-lambda}.

\subsection{Element-wise interpolation over the adaptive mesh}\label{element-wise interpolation}
Let $\{\mathcal{T}_n\}_{n=1}^{n_{\operatorname{iter}}}$ be a sequence of
nested regular tetrahedral meshes generated by Algorithm \ref{alg1}. First, we establish a finite element space of continuous element-wise polynomial functions $V_n$ as the ansatz space to approximate the target band functions. Then, we define element-wise interpolation operator $\Pi_{\mathcal{T}_n}: C(\mathcal{B})\to V_n$ to reconstruct the target band functions on conforming finite element spaces. First, we introduce the concept of a layer of each element $T\in \mathcal{T}_n$.
\begin{definition}\label{layer}
The layer $\ell_T$ of an element $T\in \mathcal{T}_n$ is $\ell_T:=n-r$, where $0\leq r\leq n-1$ is the number of refinements that have been performed on the element $T$ in Algorithm \ref{alg1}.
\end{definition}
Next, we define the total degree for each element, face and edge.
\begin{definition}[Element-wise, face-wise and edge-wise spaces of polynomial functions]\label{degree design strategy}
For each element $T\in\mathcal{T}_n$, we take $P_{n_T}(T)$ as the local interpolation space, with
\begin{equation*}
n_T:=
\left\{
\begin{aligned}
&\max\left\{2,\left\lceil \mu {\ell}_T \right\rceil  \right\} &&\text{ if } T\notin \mathcal{M}_n\\
&2 &&\text{ otherwise}.
\end{aligned}
\right.
\end{equation*}
Here, $\left\lceil \cdot \right\rceil$ denotes the ceiling function, the slope parameter $\mu>0$ will be determined by \eqref{eq:mu-cond} and $\mathcal{M}_n$ is the collection of marked elements \eqref{eq:mark}. Similarly, for each face $F\in\mathcal{F}_n$, let $P_{m_F}(F)$ be the local interpolation space over $F$ with
$m_F:=\min_{T\in\mathcal{T}_n\atop F\in \partial T}\{n_T\}$,
and let $P_{m_E}(E)$ be the local interpolation space over $E$ for $E\in\mathcal{E}_n$ with
$m_E:=\min_{F\in\mathcal{F}_n\atop E\in \partial F}\{m_F\}$.
\end{definition}

This construction implies
\begin{equation*}
		\max_{F\in\mathcal{F}_T}\{m_F\}\leq n_T\quad \mbox{and}\quad 
		\max_{E\in\mathcal{E}_T}\{m_E\}\leq \min_{F\in\mathcal{F}_T\atop E\in \partial F}\{m_F\},\quad \forall T\in \mathcal{T}_n.
\end{equation*}

Finally, we define the conforming element-wise polynomial space. On a given mesh $\mathcal{T}_n$ with $\{n_T\}_{T\in\mathcal{T}_n}$, $\{m_E\}_{E\in\mathcal{E}_n}$, and $\{m_F\}_{F\in\mathcal{F}_n}$ being the element-wise, face-wise, and edge-wise degrees, the global approximation space $V_n$ is given by
\begin{align}\label{def:vn}
V_n:=\left\{v\in C(\mathcal{B}): v|_{T}\in P_{n_T}(T),\text{ } v|_{F}\in P_{m_F}(F),\text{ } v|_{E}\in P_{m_E}(E),\,  \forall T\in\mathcal{T}_n, F\in \mathcal{F}_n,E\in \mathcal{E}_n\right\},
\end{align}
which is a linear space consisting of continuous piecewise polynomial functions on $\mathcal{T}_n$ with predetermined degrees given by the triple $\{n_T, m_F, m_E\}$.

Now, we introduce the local approximation space on the reference triangular pyramid $\hat{T}$:
\begin{equation*}
    \hat{T}:=\{(x,y,z):0\leq x\leq 1, 0\leq y\leq 1, 0\leq z\leq 1, 0\leq 1-x-y-z\leq 1\}.
\end{equation*}
The vertices of $\hat{T}$ are
$\mathbf{\hat{z}}_1=(1,0,0)$, $\mathbf{\hat{z}}_2=(0,1,0)$, $\mathbf{\hat{z}}_3=(0,0,1)$ and $\mathbf{\hat{z}}_4=(0,0,0)$. Let $\hat{F}^i$ be the face opposite to $\mathbf{\hat{z}}_i$ and let $\hat{E}^{jk}$ with $j>k$ be the edge with endpoints $\mathbf{\hat{z}}_j$ and $\mathbf{\hat{z}}_k$. For any $\{n,p_i,q_{jk}\}\in(\mathbb{N}^{+})^{11}$ with $n\geq p_i$ and $p_i\geq q_{jk}$ for $i,j,k=1,2,3,4$, $j,k\neq i$, $j>k$, \eqref{def:vn} implies that
the local space of polynomial functions over $\hat{T}$ is
\begin{align}\label{eq:loc-space}
\hat{P}:=\left\{v\in P_n(\hat{T}): v|_{\hat{F}^i}\in P_{p_i}(\hat{F}^i)\text{ and }v|_{\hat{E}^{jk}}\in P_{q_{jk}}(\hat{E}^{jk}),\text{ for }i,j,k=1,2,3,4,\text{ with } j>k\right\}.
\end{align}
Note that $\hat{P}\subset P_n(\hat{T})$ and $\hat{P}\subsetneq P_n(\hat{T})$ if $p_i<n$ or $q_{jk}<p_i$ for certain $j,k\neq i$ with $j>k$. Note also that the dimension of $P_{n}(\hat{T})$ is $\dim(P_{n}(\hat{T}))=(\substack{n+3\\3})$ and the dimension of $\hat{P}$ is
\begin{equation*}
\dim(\hat{P})=4+\sum_{j=2}^4\sum_{k=1}^{j-1}(q_{jk}-1)+\sum_{i=1}^4\left(\substack{p_i-1\\2}\right)+\left(\substack{n-1\\3}\right).
\end{equation*}
Next we construct a set of basis functions for the local polynomial space $\hat{P}$.
We first partition the degrees of freedom into edge, face, and internal ones according
to whether they vanish over the edge, face, and boundary. The basis functions can
further be classified into four types: nodal, edge, face, and internal ones, following
the standard practice in the community of $hp$-adaptive FEMs \cite{babuvska1994p}. It enables
defining conforming finite element spaces with nonuniform degrees of polynomials over the mesh.

First, the nodal shape functions are defined as the following four polynomials:
\begin{equation}\label{eq:nodal}
    \begin{aligned}
        N^1_1&=x,\quad
        N^1_2=y,\quad
        N^1_3=z,\quad
        N^1_4=1-x-y-z.
    \end{aligned}
\end{equation}
Note that for $i=1,2,3,4$, we have
$N^1_i(\mathbf{\hat z}_i)=1$ and $N^1_i(\mathbf{z})=0$ for $\mathbf{z}\in \hat{F}^i$.

Second, the set of edge shape functions over each edge $\hat{E}^{jk}$ for $j=1,2,3,4$, $j>k$ are
\begin{align}\label{eq:edgeB}
\overline{P}_{q_{jk}}(\hat{E}^{jk}):=
\text{span}\{N^{q_{jk},\nu}_{jk}:=N^1_{j}N^1_{k}\Phi_{\nu-2}(N^1_{k}-N^1_{j}):\nu=2,\cdots,q_{jk}\},
\end{align}
where $\Phi_{\nu-2}(x):=x^{\nu-2}$ is a polynomial of degree $\nu-2$.
The edge functions vanish on all vertices and all faces that do not include the edge.

Third, the face shape functions over each face $\hat{F}^{i}$ for $i=1,2,3,4$ are defined as
\begin{equation}\label{face shape functions}
    \overline{P}_{p_i}(\hat{F}^i):=\left\{b_{\hat{F}^i}\right\}\otimes P_{p_i-3}(\hat{F}^i)\text{ for }p_i\geq 3.
\end{equation}
Here $b_{\hat{F}^i}:=N^1_jN^1_kN^1_l$ with $j\neq k\neq l\neq i$ denotes the face bubble function and $P_{p_i-3}(\hat{F}^i)$ is the space of polynomials defined on $\hat{F}^{i}$ with total degree no greater than $p_i-3$. Moreover, we set $P_{0}(\hat{F}^i)=\{1\}$. Clearly, $\operatorname{supp}(b_{\hat{F}^i})=\hat{F}^i$.

Altogether, the set of external shape functions is given by
\begin{equation*}
    P^{\mathrm{E}}(\partial\hat{T}):=\text{span}\left\{N^1_i:i=1,2,3,4\right\}+\sum_{\substack{j,k=1,\cdots,4\\ j>k}}\overline{P}_{q_{jk}}(\hat{E}^{jk})+\sum_{i=1}^4 P_{p_i}(\hat{F}^i).
\end{equation*}
Let $b_{\hat{T}}:=N^1_1 N^1_2 N^1_3 N^1_4$ be the basic bubble function on the reference element $\hat{T}$ that vanishes over $\partial\hat{T}$. Then the set of internal shape functions $P^I_n(\hat{T})$ are defined as
\begin{equation}\label{internal shape functions}
    P^{\mathrm{I}}_n(\hat{T}):=
    \left\{b_{\hat{T}}\right\}\otimes P_{n-4}(\hat{T})\text{ for }n\geq 4\text{ with }P_{0}(\hat{T})=\{1\}.
\end{equation}

Next, we introduce a set of Lagrangian interpolation points over the reference triangular pyramid $\hat{T}$ with the local polynomial space $\hat{P}$ \eqref{eq:loc-space} such that its restriction to each face and each edge serves as the Lagrangian interpolation points within the face or edge called the Lobatto tetrahedral (LTT) set \cite{luo2006lobatto}. First we introduce a 1D grid in the interval
$[0,1]$ comprising a set of $n+1$ sampling points
$v_i=\tfrac{1}{2}(1+t_i)$,
for $i=2,3,\cdots,n$, where $t_i$ are the zeros of the $(n-1)$th degree Lobatto polynomial. The endpoints are $v_1=0$ and $v_{n+1}=1$.

\begin{definition}[Sampling points over each edge, face and internal]\label{defn:f-e-i-spts}
Suppose the edge degrees on $\hat{E}^{jk}$, face degrees on $\hat{F}^i$ and total degree are $q_{jk}$, $p_i$ and $n$ for $i,j,k=1,\cdots,4$, $j>k$. Then the sampling points are defined below.
\begin{itemize}
\item The set of sampling points on all edges are
\begin{equation}\label{eq:samplingpts-edge}
    \mathcal{S}=\{M_{jk}v_{\nu}:j,k=1,2,3,4, j>k \text{ and }\nu=1\cdots, q_{jk}+1\},
\end{equation}
with $M_{jk}$ being the linear transformation maps $[0,1]$ to $\hat{E}^{jk}$.
\item The sampling points on $\hat{F}^1,\cdots,\hat{F}^4$ are $(0,\zeta,\eta)$, $(\zeta,0,\eta)$, $(\zeta,\eta,0)$, $(\zeta,\eta,\xi)$ with
\begin{equation}\label{eq:samplingpts-face}
\zeta:=\tfrac{1}{3}(1+2v_j-v_k-v_\ell),\quad \eta:=\tfrac{1}{3}(1-v_j+2v_k-v_\ell),\quad\xi:=1-\zeta-\eta,
\end{equation}
for $j=1,2,\cdots,{p}_m$, $k=2,3,\cdots,{p}_m+2-j$ and $\ell={p}_m+3-j-k$ satisfying $\zeta,\eta\neq 0$, $\zeta+\eta\neq 1$ for $\hat{F}^m$, $m=1,2,3$ and $\zeta,\eta,\xi\neq 0$, $\xi+\zeta\neq 1$, $\zeta+\eta\neq 1$, $\xi+\eta\neq 1$, $\xi+\zeta\neq 1$ for $\hat{F}^4$. Here, ${p}_m$ is the face-wise degree on the given face $\hat{F}^m$.
\item The internal sampling points are introduced at the positions $(\xi,\zeta,\eta)$ with
\begin{equation}\label{eq:internal-points}
            \xi=\tfrac{1}{4}(1+3v_i-v_j-v_k-v_\ell),\,\,
            \zeta=\tfrac{1}{4}(1-v_i+3v_j-v_k-v_\ell),\,\,
            \eta=\tfrac{1}{4}(1-v_i-v_j+3v_k-v_\ell),
\end{equation}
        for $i=2,3,\cdots,n$, $j=2,3,\cdots,n+1-i$, $k=2,3,\cdots,n+2-i-j$, and $\ell=n+4-i-j-k$.
    \end{itemize}
\end{definition}

Next, we give an interpolation operator $\Pi: C(\hat{T}) \to \hat{P}$, such that for all $f\in C(\hat{T})$, there holds
\begin{equation}\label{eq:element-interp-property}
	\left\{
	\begin{aligned}
		\Pi f\big|_{\hat{F}^i}&\in P_{p_i}(\hat{F}^i)\quad\qquad\text{ for } i=1,2,3,4,&\\
		\Pi f\big|_{\hat{E}^{jk}}&\in P_{q_{jk}}(\hat{E}^{jk})\qquad\text{ for } j,k=1,2,3,4,\text{ and }j>k,&\\
		\Pi f&\in \hat{P}.		&
	\end{aligned}
	\right.
\end{equation}

First, we introduce the external interpolation $\operatorname{E}: C(\partial\hat{T})\to P^{E}(\partial\hat{T})$. For any function $f\in C(\partial\hat{T})$, the external interpolation $\operatorname{E}f$ is a linear interpolation on a set of sampling points on $\partial\hat{T}$ given by Definition \ref{defn:f-e-i-spts} in the set of external shape functions $P^{\mathrm{E}}(\partial\hat{T})$.
Next, we extend $\operatorname{E}$ from $\partial\hat{T}$ to $\hat{T}$ and denote it as $\mathbf{E}:P^{\mathrm{E}}(\partial\hat{T})\to\hat{P}$.
Then we define the internal interpolation operator $\operatorname{I}: C(\hat{T})\to P^{\mathrm{I}}_n(\hat{T})$ as the Lagrange interpolation on internal sampling point set \eqref{eq:internal-points} using the set of internal shape functions \eqref{internal shape functions}. Denote the Lebesgue constant of $\operatorname{I}$ as $\Upsilon_n$, i.e.,
\begin{align}\label{eq:lebegue-const}
\|\operatorname{I}f\|_{\infty,\hat{T}}\leq \Upsilon_n\|f\|_{\infty,\hat{T}},\quad\forall f\in C(\hat{T}).
\end{align}
It has been shown numerically \cite{luo2006lobatto, chan2015comparison} that $\Upsilon_n$ has moderate growth with respect
to the total degree $n$. Finally, we can define the element-wise interpolation $ \Pi: C(\hat{T})\to \hat{P}$ by
\begin{equation}\label{eq:pi-n}
    \Pi f:=\mathbf{E}(\operatorname{E} f)+\operatorname{I} (f-\mathbf{E}(\operatorname{E} f)) ,\quad\forall f\in C(\hat{T}).
\end{equation}

By construction, $\Pi f$ fulfills the desired properties \eqref{eq:element-interp-property}. Moreover, the following properties hold. (i) $\Pi p=p$, for all $p\in \hat{P}$;
(ii) $\Pi f\big|_{\hat{E}^{jk}}\in P_{q_{jk}}(\hat{E}^{jk})$ is the Lagrange interpolation on the sampling point set \eqref{eq:samplingpts-edge} along each edge $\hat{E}^{jk}$ for $j,k=1,2,3,4$ with $j>k$;
(iii) $\Pi f\big|_{\hat{F}^i}\in P_{p_i}(\hat{F}^i)$ is the Lagrange interpolation on the sampling point set \eqref{eq:samplingpts-face} along each face $\hat{F}^i$ for $i=1,2,3,4$;
(iv) $\Pi f\big|_{\hat{T}^\backslash\partial T}=\operatorname{I}f\in P^I_n(\hat{T})$ is the Lagrange interpolation of $f$ on the sampling point set inside $\hat{T}$ \eqref{eq:internal-points}.

For a given mesh $\mathcal{T}_n$, we define the global approximation operator $\Pi_{\mathcal{T}_n}: C(\mathcal{B})\to V_n$ by applying $\Pi$ to each element $T\in\mathcal{T}_n$ as
\begin{equation}\label{eq:interp-global}
	\Pi_{\mathcal{T}_n}f\big|_{T}:=\left(\Pi(f\circ M_T)\right)\circ M_T^{-1},\quad \forall T\in \mathcal{T}_n,
\end{equation}
where the affine transformation $M_T:=A_T\mathbf{z}+\mathbf{b}$ ($A_T\in\mathbb{R}^{3\times 3}$ is invertible and $\mathbf{b}\in \mathbb{R}^3$) maps the reference tetrahedron $\hat{T}$ to the "physical" element $\Bar{T}$.

Now we can give the following algorithm for approximating $\lambda_{\ell}(\mathbf{k})$ and $\lambda_{\ell+1}(\mathbf{k})$. By sequentially generating the adaptive mesh and the polynomial degree required for each element, each face, and each edge, we can then use the interpolation formula \eqref{eq:interp-global}  to adaptively interpolate $\lambda_q$ for $q=n, n+1$.

\begin{algorithm}[H]
	\caption{$hp$-adaptive sampling algorithm}\label{alg2}
	\begin{algorithmic}
		\Require $\mathcal{T}_{n_{\operatorname{iter}}}$: the outcome of adaptive mesh from Algorithm \ref{alg1};
		
		$\{\ell_T\}_{T\in \mathcal{T}_{n_{\operatorname{iter}}}}$: layer of each element defined in Definition \ref{layer};
		
		$\mu$: a positive parameter defined in Definition \ref{degree design strategy} satisfying \eqref{eq:mu-cond};
		\Ensure $\lambda_q$, $q=\ell,\ell+1$

		Define element-wise, face-wise and edge-wise degrees: $\{n_T\}_{T\in \mathcal{T}_{n_{\operatorname{iter}}}}$, $\{m_F\}_{F\in \mathcal{F}_{n_{\operatorname{iter}}}}$ and $\{m_E\}_{E\in\mathcal{E}_{n_{\operatorname{iter}}}}$  by Definition \ref{degree design strategy};
		
		Compute the local interpolation $\Pi\lambda_q$ by \eqref{eq:pi-n};
		
		Compute the global interpolation $\Pi_{\mathcal{T}_{n_{\operatorname{iter}}}}\lambda_q$ by \eqref{eq:interp-global}.
		
	\end{algorithmic}
\end{algorithm}

\section{Convergence analysis}\label{sec:convergence}
This section is devoted to the convergence analysis of Algorithm \ref{alg2}. We prove an exponential convergence rate in Theorem \ref{exp} when the number of crossings among $\{\lambda_q: \max\{1,\ell-1\}\leq q \leq \ell+2\}$ is finite, and an algebraic convergence in Theorem \ref{algebraic} otherwise. To this end, we first establish the approximation property of the local polynomial interpolation operator $\Pi$ in Theorem \ref{local error theorem}. Then we formulate the element-wise approximation property of $\Pi_{\mathcal{T}_n}$ in Theorems \ref{exp} and \ref{algebraic} under assumption \eqref{eq:mu-cond} on the slope parameter $\mu$ such that the approximation error in the marked element patch $\mathcal{M}_n$ dominates.

\subsection{Local approximation error}
First we establish the approximation property of the local interpolation operator $\Pi$. Recall that for any element-wise, face-wise, and edge-wise degrees $\{n_T\}_{T\in\mathcal{T}_n}$, $\{m_F\}_{F\in\mathcal{F}_n}$, and $\{m_E\}_{E\in\mathcal{E}_n}$, the local space $\hat{P}$ is defined by \eqref{eq:loc-space}. Below the notation $A \lesssim B$ stands for that $A/B$ is bounded above by a constant independent of the total degree, face degree, and edge degree.

First, we establish the stability of the external interpolation $E_n: C(\partial\hat{T})\to P^{E}(\partial\hat{T})$. Although there is no proof, numerical results show that the Lebesgue constant of the set of sampling points on each face $\hat{F}^i$ \eqref{eq:samplingpts-face} is $\mathcal{O}(p_i)$ \cite{blyth2006lobatto}. Hence we make the following assumption
\begin{align}\label{eq:lebesgue-face}
\|\operatorname{E}f\|_{\infty,\hat{F}^i\backslash\partial\hat{F}^i}\lesssim p_i \|f\|_{\infty,\hat{F}^i\backslash\partial\hat{F}^i},\quad \forall  f\in C(\hat{F}^i\backslash\partial\hat{F}^i).
\end{align}

\begin{theorem}\label{theorem lebesgue}
{Let \eqref{eq:lebesgue-face} hold.} Then the Lagrange interpolation $\operatorname{E}|_{\hat{F}^i}$ along each face $\hat{F}^i$ is stable, for $i=1,2,3,4$. Moreover, there holds
	\begin{equation}\label{eq:en}
		\|\operatorname{E} f\|_{\infty,\hat{F}^i}\lesssim p_i\log p_i\|f\|_{\infty,\hat{F}^i}\quad \forall f\in C(\hat{F}^i).
	\end{equation}
\end{theorem}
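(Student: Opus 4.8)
The plan is to exploit the hierarchical, block-triangular structure of the external interpolation once it is restricted to a single face. Writing the barycentric coordinates of the triangle $\hat{F}^i$ as $(\lambda_j,\lambda_k,\lambda_l)$ with $j,k,l\neq i$, the nodal functions \eqref{eq:nodal} reduce on $\hat{F}^i$ to $\lambda_j,\lambda_k,\lambda_l$, the edge functions \eqref{eq:edgeB} vanish at the three vertices, and the face bubbles \eqref{face shape functions} vanish on all of $\partial\hat{F}^i$. Hence the interpolation matrix at the vertex, edge and interior sampling points is block-triangular, and $\operatorname{E}f\big|_{\hat{F}^i}$ splits as
\[
\operatorname{E}f\big|_{\hat{F}^i}=\operatorname{E}_{\partial}f+\operatorname{I}_F\!\left(f-\operatorname{E}_{\partial}f\right),
\]
where $\operatorname{E}_{\partial}f$ is the vertex-plus-edge interpolant, extended into $\hat{F}^i$ through the edge shape functions, whose trace on each edge $\hat{E}^{jk}$ is the one-dimensional Gauss--Lobatto interpolant of $f$, and $\operatorname{I}_F$ is the bubble interpolation at the interior points \eqref{eq:samplingpts-face} to which assumption \eqref{eq:lebesgue-face} applies.

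Given this splitting I would estimate the two pieces separately. Since $\operatorname{I}_F g$ vanishes on $\partial\hat{F}^i$, its sup-norm over $\hat{F}^i$ equals that over the open face, so \eqref{eq:lebesgue-face} yields $\|\operatorname{I}_F(f-\operatorname{E}_\partial f)\|_{\infty,\hat{F}^i}\lesssim p_i\bigl(\|f\|_{\infty,\hat{F}^i}+\|\operatorname{E}_\partial f\|_{\infty,\hat{F}^i}\bigr)$. Everything then reduces to the key estimate
\[
\|\operatorname{E}_\partial f\|_{\infty,\hat{F}^i}\lesssim \log p_i\,\|f\|_{\infty,\hat{F}^i},
\]
because combining the two bounds produces exactly the claimed $p_i\log p_i$ factor, and the stability of $\operatorname{E}|_{\hat{F}^i}$ follows a fortiori.

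To prove the key estimate I would treat the nodal part and each edge contribution in turn. The nodal part $\sum_m f(\hat{\mathbf{z}}_m)\lambda_m$ is a convex combination, hence bounded by $\|f\|_{\infty,\hat{F}^i}$. For the edge-$jk$ contribution, setting $a=\lambda_j$, $b=\lambda_k$ and using $\Phi_{\nu-2}(x)=x^{\nu-2}$ in \eqref{eq:edgeB}, it takes the separated form $P_{jk}(a,b)=ab\,g_{jk}(b-a)$ with $g_{jk}$ a univariate polynomial; on $\hat{E}^{jk}$ (where $a+b=1$, so $ab=\tfrac14(1-s^2)$ with $s=b-a$) its trace is the edge-bubble part $I_{jk}$ of the one-dimensional Gauss--Lobatto interpolant, whose sup-norm is $\mathcal{O}(\log q_{jk})\|f\|_{\infty,\hat{F}^i}$ by the classical logarithmic growth of the Gauss--Lobatto Lebesgue constant. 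The crux is a maximum-principle-type bound: on the triangle $a,b\ge 0$ and $a+b\le 1$ give $1+b-a\ge b$ and $1-b+a\ge a$, so $(1+b-a)(1-b+a)=1-(b-a)^2\ge ab$; substituting $g_{jk}(b-a)=4I_{jk}(b-a)/\bigl(1-(b-a)^2\bigr)$ yields $|P_{jk}(a,b)|\le 4\,\|I_{jk}\|_{\infty,\hat{E}^{jk}}$ throughout $\hat{F}^i$. Summing the three edges and the nodal part then gives the key estimate, and the theorem follows. I expect this last algebraic step to be the main obstacle: it is precisely the statement that extending the edge interpolants into the face through the barycentric-product basis does not inflate the sup-norm beyond a constant, so that only the one-dimensional logarithmic edge factor survives.
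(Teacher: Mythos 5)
Your argument is correct, but there is nothing in this paper to compare it against line by line: the paper's entire proof of Theorem \ref{theorem lebesgue} is the single sentence ``This result follows from \cite[Theorem 5.3]{wang2023dispersion2}'', i.e.\ it defers to the 2D companion paper. Your write-up is therefore a genuine self-contained proof where the paper offers only a citation, and it follows what is almost certainly the intended route: split $\operatorname{E}f|_{\hat{F}^i}$ into the vertex-plus-edge part $\operatorname{E}_\partial f$ and the face-bubble correction (the block-triangular structure of the nodal/edge/face basis makes this split exact), bound the bubble part by the assumed $\mathcal{O}(p_i)$ interior Lebesgue constant applied to $f-\operatorname{E}_\partial f$, and bound $\operatorname{E}_\partial f$ by $\mathcal{O}(\log p_i)\|f\|_\infty$, which multiplies out to the claimed $p_i\log p_i$. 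Your key algebraic step is verified: on the triangle $a,b\ge 0$, $a+b\le 1$ one indeed has $1-(b-a)^2=(1+b-a)(1-b+a)\ge ab$, so $|ab\,g(b-a)|\le (1-(b-a)^2)|g(b-a)|=4|I_{jk}(b-a)|$, and the extension of each edge interpolant into the face through $N_j^1N_k^1\Phi_{\nu-2}(N_k^1-N_j^1)$ costs only a constant; the nodal part is a convex combination since the three surviving $N_m^1$ sum to one on $\hat{F}^i$. This is exactly the mechanism that makes the boundary contribution purely logarithmic, and it is the same device the paper itself uses (for a single face or edge) in the proof of Theorem \ref{theorem extension}, where $\max_{\hat{T}}|\mathbf{E}(f)|$ is reduced to the boundary maximum.

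Two points are worth flagging explicitly, since both are glossed over by the paper as well. First, the $\mathcal{O}(\log q_{jk})$ bound for the 1D Lagrange interpolation at the Lobatto points $v_\nu$ is the load-bearing classical ingredient; it is where the $\log$ in \eqref{eq:en} comes from, and you should cite a reference for the logarithmic growth of the Lobatto/Fekete Lebesgue constant rather than call it ``classical'' in passing. Second, assumption \eqref{eq:lebesgue-face} is stated for $\operatorname{E}$ itself on the open face, which read literally would already imply a bound of order $p_i$ and render the theorem vacuous; your reinterpretation of it as a Lebesgue-constant bound for the interior (bubble) interpolation at the points \eqref{eq:samplingpts-face} is the only reading consistent with the $p_i\log p_i$ conclusion and with the preceding sentence in the paper, but you should state that reinterpretation explicitly rather than silently substitute $\operatorname{I}_F$ for $\operatorname{E}$.
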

\begin{proof}
This result follows from \cite[Theorem 5.3]{wang2023dispersion2}.
\end{proof}
\begin{theorem}\label{theorem extension}
	For each $f\in C(\partial \hat{T})$ with $f|_{\hat{F}^i}\in P_{p_i}(\hat{F}^i)$ for all $i=1,2,3,4$ and $f|_{\hat{E}^{jk}}\in P_{q_{jk}}(\hat{E}^{jk})$ for all $j,k=1,2,3,4$ with $j>k$, the extension $\mathbf{E}:P^{\mathrm{E}}(\partial\hat{T})\to \hat{P}$ satisfies
	\begin{equation}\label{eq:ext}
		\|\mathbf{E}(f)\|_{\infty,\hat{T}}\lesssim \|f\|_{\infty,\partial \hat{T}}.
	\end{equation}
	Moreover, the extension map $\mathbf{E}:f\mapsto \mathbf{E}(f)$ is a bounded linear operator.
\end{theorem}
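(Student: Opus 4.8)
The plan is to prove \eqref{eq:ext} by exploiting the hierarchical (nodal/edge/face) structure of the external shape functions and bounding the extension of each layer separately, with constants independent of $n$, $\{p_i\}$ and $\{q_{jk}\}$. Linearity of $\mathbf{E}$ is immediate: expanding $f\in P^{\mathrm{E}}(\partial\hat{T})$ in the external shape function basis produces coefficients depending linearly on $f$, and $\mathbf{E}(f)$ is just this expansion read as a polynomial on $\hat{T}$. Since $\mathbf{E}(f)|_{\partial\hat{T}}=f$, the estimate \eqref{eq:ext} is equivalent to the purely interior bound
\begin{equation*}
\|p\|_{\infty,\hat{T}}\lesssim\|p\|_{\infty,\partial\hat{T}}
\end{equation*}
for every $p$ in the interior realization of $P^{\mathrm{E}}(\partial\hat{T})$. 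Crucially this space contains no polynomial divisible by the interior bubble $b_{\hat{T}}=N^1_1N^1_2N^1_3N^1_4$, so the obvious obstructions (such as $b_{\hat{T}}$ itself, which vanishes on $\partial\hat{T}$) are excluded; no interpolation Lebesgue constant enters, because $f$ already lies in the polynomial space and $\mathbf{E}$ merely extends it.

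First I would split $p=p_N+p_E+p_F$ into its nodal, edge and face parts by successively subtracting the lower-dimensional traces, and observe that each part has boundary sup-norm controlled by $\|p\|_{\infty,\partial\hat{T}}$: the nodal trace is bounded by the four vertex values; on each edge $p_E$ coincides with $p-p_N$; and on each face $p_F$ coincides with $p-p_N-p_E$, whose face restriction is controlled once the edge part is controlled on the faces by the planar version of the edge estimate below. It therefore suffices to bound the extension of each part into $\hat{T}$.

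The nodal and edge extensions are contractions, which I would establish by explicit barycentric algebra. For $p_N=\sum_i c_iN^1_i$, the bounds $N^1_i\ge0$ and $\sum_iN^1_i=1$ give $\|p_N\|_{\infty,\hat{T}}=\max_i|c_i|$, attained at a vertex. For the edge part on $\hat{E}^{jk}$, any combination of the edge shape functions \eqref{eq:edgeB} factors as $N^1_jN^1_k\,R(N^1_k-N^1_j)$ for a univariate polynomial $R$. Writing $w:=N^1_k-N^1_j$, throughout $\hat{T}$ one has $|w|\le N^1_j+N^1_k\le1$ together with
\begin{equation*}
N^1_jN^1_k=\tfrac14\big((N^1_j+N^1_k)^2-w^2\big)\le\tfrac14\big(1-w^2\big),
\end{equation*}
whereas on $\hat{E}^{jk}$ one has $N^1_j+N^1_k=1$, so the weight equals $\tfrac14(1-w^2)$ exactly and $w$ sweeps $[-1,1]$. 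Hence the interior value at a given $w$ is dominated by the edge value at the same $w$, and summing over the six edges yields $\|\mathbf{E}(p_E)\|_{\infty,\hat{T}}\lesssim\|p_E\|_{\infty,\partial\hat{T}}$.

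The face extension is the crux and the only genuinely three-dimensional step. Here I would use the radial projection from the vertex $\hat{\mathbf{z}}_i$ opposite $\hat{F}^i$: every $\mathbf{x}\in\hat{T}$ writes as $\mathbf{x}=(1-N^1_i)\,\mathbf{y}+N^1_i\,\hat{\mathbf{z}}_i$ with $\mathbf{y}\in\hat{F}^i$, so that the three in-face barycentric coordinates scale as $N^1_m(\mathbf{x})=s\,N^1_m(\mathbf{y})$ for $m\neq i$, with $s:=1-N^1_i\in[0,1]$. Consequently the face bubble scales as $b_{\hat{F}^i}(\mathbf{x})=s^3b_{\hat{F}^i}(\mathbf{y})$, and a face shape function built from a homogeneous factor of degree $d$ scales by $s^{3+d}\le1$, i.e. it is an exact contraction onto its face trace. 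The obstacle is that a general face datum mixes homogeneous degrees, so the factors $s^{3+d}$ act as a nontrivial multiplier and a naive term-by-term estimate would reintroduce degree dependence. I would overcome this exactly as in the edge case: after factoring out $b_{\hat{F}^i}=N^1_jN^1_kN^1_l$, the problem reduces to the two-variable weighted inequality on the solid simplex $\{N^1_m\ge0,\ \sum_{m\neq i}N^1_m\le1\}$, where the weight $N^1_jN^1_kN^1_l$ now plays the role of $\tfrac14(1-w^2)$; because this weight vanishes on the inner faces $N^1_m=0$ and scales as $s^3$, the supremum over the solid simplex is controlled by that over its outer face $\sum_{m\neq i}N^1_m=1$, i.e. over $\hat{F}^i$, with a degree-independent constant. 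This is the two-dimensional analogue of the edge computation and parallels the argument of \cite{wang2023dispersion2}. Combining the nodal, edge and face bounds gives \eqref{eq:ext}; together with the linearity already noted, $\mathbf{E}$ is a bounded linear operator, as claimed.
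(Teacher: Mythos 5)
Your proposal is correct and follows essentially the same route as the paper's proof: decompose the external datum into nodal, edge and face contributions, show that each extended piece attains its maximum on $\partial\hat{T}$ (the edge case via the substitution $w=N^1_k-N^1_j$ with the weight bound $N^1_jN^1_k\le\tfrac14(1-w^2)$, the face case by dominating the bubble weight at fixed in-face coordinates), and combine by linearity and the triangle inequality. If anything, you are more explicit than the paper about the step it leaves implicit in its final estimate, namely that each component of the decomposition has boundary sup-norm controlled by $\|f\|_{\infty,\partial\hat{T}}$ with degree-independent constants; your radial-projection detour for the face term is unnecessary (the paper simply maximizes the face bubble in the coordinate transverse to the face, with constant $1$), but you correctly recognize its limitation and arrive at the same weight-domination argument.
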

\begin{proof}
First, we assume that $f$ vanishes at all vertices, all edges and all faces but one face. We assume this face is the third face $\hat{F}^3=\{(x,y,z):0\leq x,y,1-x-y\leq 1,z=0\}$, i.e., only face shape functions over $\hat{F}^3$ remain \eqref{face shape functions}. Let $P_{p_3-3}(\hat{F}^3):=\text{span}\{\phi_{\nu}:\nu=1,\cdots,\mu(p_3)\}$ with $\mu(p_3):=(\substack{p_3-1\\2})$, then $\mathbf{E}(f)$ can be written in the form
\begin{equation}\label{eq:xxxxxx}
\mathbf{E}(f)=b_{\hat{F}^3}\sum_{\nu=1}^{\mu(p_3)}c_{\nu}{\phi}_{\nu}.
\end{equation}
Then there holds
\begin{align*}
	&\max_{(x,y,z)\in \hat{T}}|\mathbf{E}(f)|=\max_{(x,y,z)\in \hat{T}}\bigg|b_{\hat{F}^3}\sum_{\nu=1}^{\mu(p_3)} c_{\nu}\phi_{\nu}\bigg|\\
   =&\max_{\substack{0\leq x\leq 1,0\leq y\leq 1,0\leq z\leq 1\\x+y+z\leq 1}}\bigg|xy(1-x-y-z)\cdot\sum_{\substack{0\leq i,j\leq p_3-3\\0\leq i+j\leq p_3-3}} c_{ij}x^iy^j\bigg|.
\end{align*}
Consequently, straightforward calculation leads to	
\begin{align}
			\max_{(x,y,z)\in \hat{T}}|\mathbf{E}(f)|&= \max_{\substack{0\leq x\leq 1,0\leq y\leq 1,z=0\\x+y\leq 1}}\bigg|xy(1-x-y-z)\cdot\sum_{\substack{0\leq i,j\leq p_3-3\\0\leq i+j\leq p_3-3}} c_{ijk}x^iy^j\bigg|\nonumber\\
			=\max_{(x,y,z)\in \partial\hat{T}}|f|.\label{eq:face-ext}
\end{align}
Second, we assume that $f$ vanishes at all faces, edges and vertices but one edge. We assume the edge to be $\hat{E}^{21}$. Then $\mathbf{E}(p)$ can be written in the form
	\begin{equation*}
		\begin{aligned}
			\mathbf{E}(f)=N^1_{2}N^1_{1}\sum_{\nu=2}^{q_{21}}c_{\nu}(N^1_{2}-N^1_{1})^{\nu-2}=xy\sum_{\nu=2}^{q_{21}}c_{\nu}(y-x)^{\nu-2}.
		\end{aligned}
	\end{equation*}
	This leads to
		\begin{align}
			\max_{(x,y)\in \hat{T}}|\mathbf{E}(f)|&=\max_{(x,y)\in \hat{T}}\left|xy\sum_{\nu=2}^{q_{21}}c_{\nu}(y-x)^{\nu-2}\right|\nonumber\\
			&=\max_{\substack{0\leq x\leq 1,0\leq y\leq 1,0\leq z\leq 1\\x+y+z\leq 1}}\left|xy\sum_{\nu=2}^{q_{21}}c_{\nu}(y-x)^{\nu-2}\right|\nonumber\\
			&=\max_{\substack{0\leq x\leq 1,0\leq y\leq 1,z=0\\x+y= 1}}\left|xy\sum_{\nu=2}^{q_{21}}c_{\nu}(y-x)^{\nu-2}\right|
			=\max_{(x,y)\in \partial\hat{T}}|f|.\label{eq:edge-ext}
		\end{align}
Third, we assume that $f$ vanishes at all faces, edges and vertices, but one vertex. Similarly, we have
\begin{equation}\label{eq:node-ext}
		\begin{aligned}
			\max_{(x,y)\in \hat{T}}|\mathbf{E}(f)|=\max_{(x,y)\in \partial\hat{T}}|f|.
  \end{aligned}
	\end{equation}
Finally, we can prove the general case of $f\in C(\partial \hat{T})$ with $f|_{\hat{F}^i}\in P_{p_i}(\hat{F}^i)$ for all $i=1,2,3,4$ and $f|_{\hat{E}^{jk}}\in P_{q_{jk}}(\hat{E}^{jk})$ for all $j,k=1,2,3,4$ with $j>k$. The linearity of the extension operator $\mathbf{E}$, together with \eqref{eq:face-ext}, \eqref{eq:edge-ext} and \eqref{eq:node-ext}, implies
	\begin{align*}
    &\max_{(x,y,z)\in \hat{T}}|\mathbf{E}(f)|=\max_{(x,y,z)\in \hat{T}}\bigg|\sum_{i=1}^4\mathbf{E}(f\big|_{\hat{F}^i})+\sum_{\substack{j,k=1\\j>k}}^4\mathbf{E}(f\big|_{\hat{E}^{jk}}) +\sum_{\ell=1}^4N_{\ell}^1 f(\hat{\mathbf{z}}_{\ell})\bigg|  \\	
    \leq&\sum_{i=1}^4\max_{(x,y,z)\in \hat{T}}|\mathbf{E}(f\big|_{\hat{F}^i})| +\sum_{\substack{j,k=1\\j>k}}^4\max_{(x,y)\in \hat{T}}|\mathbf{E}(f\big|_{\hat{E}^{jk}})|
   +\sum_{\ell=1}^4\max_{(x,y)\in \hat{T}}|N_{\ell}^1f(\hat{\mathbf{z}}_{\ell})|\\
	= & \sum_{i=1}^4\max_{(x,y,z)\in \partial\hat{T}}|\mathbf{E}(f\big|_{\hat{F}^i})| +\sum_{\substack{j,k=1\\j>k}}^4\max_{(x,y)\in \partial\hat{T}}|\mathbf{E}(f\big|_{\hat{E}^{jk}})|
   +\sum_{\ell=1}^4\max_{(x,y)\in \partial\hat{T}}|N_{\ell}^1f(\hat{\mathbf{z}}_{\ell})|\lesssim\Vert f\Vert_{\infty,\partial \hat{T}},
	\end{align*}
where $f\big|_{\hat{F}^i}$ denotes the face polynomials interpolating the sampling points over $\hat{F}^i$, $i=1,\cdots,4$; $f\big|_{\hat{E}^{jk}}$ are edge polynomials interpolating the sampling points over one edge, $j,k=1,\cdots,4$, $j>k$; $\sum_{\ell=1}^4N_{\ell}^1f(\hat{\mathbf{z}}_{\ell})$ are nodal shape functions interpolating $\hat{\mathbf{z}}_{\ell}$, $\ell=1,\cdots,4$. This proves the desired result.
\end{proof}
Finally, we can state the following quasi-optimal approximation property of $\Pi$ \eqref{eq:pi-n}.

\begin{theorem}[Quasi-optimal approximation property of $\Pi$]\label{local error theorem}
	For every $n\in\mathbb{N}^+$, the interpolation operator $\Pi:C(\hat{T})\to \hat{P}$ satisfies
	\begin{equation*}
		\|f-\Pi f\|_{\infty,\hat{T}}\lesssim \cstab{n}\inf_{p\in \hat{P}}\|f-p\|_{\infty,\hat{T}}  \text{ with } \cstab{n}:=\Upsilon_n n\log n.\
	\end{equation*}
\end{theorem}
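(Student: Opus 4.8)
The plan is to reduce the quasi-optimality estimate to an operator-norm (Lebesgue-constant) bound on $\Pi$, exploiting that $\Pi$ reproduces $\hat{P}$. Concretely, for any $p\in\hat{P}$, property (i) gives $\Pi p=p$, so that $f-\Pi f=(f-p)-\Pi(f-p)$ and hence $\|f-\Pi f\|_{\infty,\hat{T}}\le (1+\|\Pi\|)\|f-p\|_{\infty,\hat{T}}$, where $\|\Pi\|$ denotes the norm of $\Pi:C(\hat{T})\to C(\hat{T})$ equipped with the sup-norm. Taking the infimum over $p\in\hat{P}$ then yields $\|f-\Pi f\|_{\infty,\hat{T}}\lesssim (1+\|\Pi\|)\inf_{p\in\hat{P}}\|f-p\|_{\infty,\hat{T}}$. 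Thus it suffices to prove $\|\Pi\|\lesssim\cstab{n}=\Upsilon_n n\log n$.

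To estimate $\|\Pi\|$ I would decompose $\Pi$ according to its definition \eqref{eq:pi-n}, $\Pi f=\mathbf{E}(\operatorname{E} f)+\operatorname{I}\big(f-\mathbf{E}(\operatorname{E} f)\big)$, and bound the external and internal contributions separately. For the external term, first control the face interpolation via Theorem \ref{theorem lebesgue}: since $\operatorname{E} f$ restricted to each face $\hat{F}^i$ is the face interpolant and $p_i\le n$, one has $\|\operatorname{E} f\|_{\infty,\hat{F}^i}\lesssim p_i\log p_i\,\|f\|_{\infty,\hat{F}^i}\lesssim n\log n\,\|f\|_{\infty,\hat{T}}$; the edge and nodal parts of $\operatorname{E} f$ carry strictly smaller Lebesgue constants (of order $\log q_{jk}$ and $O(1)$ respectively) and are therefore dominated, so $\|\operatorname{E} f\|_{\infty,\partial\hat{T}}\lesssim n\log n\,\|f\|_{\infty,\hat{T}}$. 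Applying the extension bound \eqref{eq:ext} of Theorem \ref{theorem extension} then gives $\|\mathbf{E}(\operatorname{E} f)\|_{\infty,\hat{T}}\lesssim\|\operatorname{E} f\|_{\infty,\partial\hat{T}}\lesssim n\log n\,\|f\|_{\infty,\hat{T}}$.

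For the internal term, I would use the Lebesgue-constant bound \eqref{eq:lebegue-const} for $\operatorname{I}$ together with the triangle inequality and the external estimate just obtained:
\[
\big\|\operatorname{I}\big(f-\mathbf{E}(\operatorname{E} f)\big)\big\|_{\infty,\hat{T}}\le\Upsilon_n\big(\|f\|_{\infty,\hat{T}}+\|\mathbf{E}(\operatorname{E} f)\|_{\infty,\hat{T}}\big)\lesssim\Upsilon_n\,n\log n\,\|f\|_{\infty,\hat{T}}.
\]
Adding the two contributions and using $\Upsilon_n\ge 1$ (so that the pure $n\log n$ term from the external part is absorbed) yields $\|\Pi f\|_{\infty,\hat{T}}\lesssim\Upsilon_n n\log n\,\|f\|_{\infty,\hat{T}}=\cstab{n}\|f\|_{\infty,\hat{T}}$, i.e.\ $\|\Pi\|\lesssim\cstab{n}$, which combined with the first paragraph completes the proof.

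Since the two key ingredients, namely the face-wise stability of $\operatorname{E}$ and the boundedness of the extension $\mathbf{E}$, are already furnished by Theorems \ref{theorem lebesgue} and \ref{theorem extension}, there is no serious analytic obstacle; the work is essentially bookkeeping. The one point requiring care is the norm accounting: the extension estimate controls the interior sup-norm by the boundary sup-norm, while the face stability passes from $\|f\|_{\infty,\hat{F}^i}$ to $\|f\|_{\infty,\hat{T}}$ (legitimate since each $\hat{F}^i\subset\hat{T}$), and one must verify that the face term $n\log n$, rather than the subdominant edge and nodal terms, governs the external contribution, and that multiplying by $\Upsilon_n$ for the internal correction produces exactly the advertised factor $\Upsilon_n n\log n$.
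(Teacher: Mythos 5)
Your proposal is correct and follows essentially the same route as the paper: both establish the stability bound $\|\Pi f\|_{\infty,\hat{T}}\lesssim \Upsilon_n n\log n\,\|f\|_{\infty,\hat{T}}$ by combining the face stability of $\operatorname{E}$ (Theorem \ref{theorem lebesgue}), the extension bound of $\mathbf{E}$ (Theorem \ref{theorem extension}), and the Lebesgue constant of $\operatorname{I}$, and then conclude via the projection property $\Pi p=p$ and the standard Lebesgue-lemma argument. The only difference is the order in which the two steps are presented, which is immaterial.
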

\begin{proof}
	The stability of $\operatorname{E}$ and $\mathbf{E}$ in \eqref{eq:en} and \eqref{eq:ext} gives the stability of $\mathbf{E}\operatorname{E}: C(\partial \hat{T})\to \hat{P}$:
	\begin{equation}\label{eq:extension-stab}
		\|\mathbf{E}(\operatorname{E} f)\|_{\infty,\hat{T}}\lesssim\|\operatorname{E} f\|_{\infty,\partial \hat{T}}\lesssim n\log n\|f\|_{\infty,\partial \hat{T}}.
	\end{equation}
	Together with the stability estimate of the internal interpolation operator $\operatorname{I}$ in  \eqref{eq:lebegue-const}, we derive
	\begin{equation*}
		\begin{aligned}
			\|\Pi f\|_{\infty,\hat{T}}&\lesssim \|\mathbf{E}(\operatorname{E} f)\|_{\infty,\hat{T}}+\|\operatorname{I}(f-\mathbf{E}(\operatorname{E}f))\|_{\infty,\hat{T}} \\
			&\lesssim n\log n\|f\|_{\infty,\partial \hat{T}}+\Upsilon_n\|f-\mathbf{E}(\operatorname{E}f)\|_{\infty,\hat{T}} \\
			&\lesssim n\log n\|f\|_{\infty,\partial \hat{T}}+\Upsilon_n\left(\|f\|_{\infty,\hat{T}}
			+\|\mathbf{E}(\operatorname{E} f)\|_{\infty,\hat{T}} \right).
		\end{aligned}
	\end{equation*}
In view of \eqref{eq:extension-stab}, we obtain
\begin{equation*}
			\|\Pi f\|_{\infty,\hat{T}}\lesssim
			n\log n\|f\|_{\infty,\partial \hat{T}}+\Upsilon_n \|f\|_{\infty,\hat{T}}+\Upsilon_n n\log n\|f\|_{\infty,\partial\hat{T}}\lesssim \Upsilon_n n\log n\|f\|_{\infty,\hat{T}}.
\end{equation*}
Note that $\Pi \phi=\phi$ for all $\phi\in \hat{P}$. Consequently, we arrive at
\begin{equation*}
\|f-\Pi \phi\|_{\infty,\hat{T}}=\|(f-\phi)-\Pi(f-\phi)\|_{\infty,\hat{T}}\lesssim \Upsilon_n n\log n\|f-\phi\|_{\infty,\hat{T}},\quad \forall \phi\in\hat{P}.
\end{equation*}
Taking the infimum over all $\phi\in \hat{P}$ leads to the desired assertion.
\end{proof}

\begin{theorem}[Element-wise approximation property of $\Pi_{\mathcal{T}_n}$]
	\label{property}
	Let $n\in\mathbb{N}$ be sufficiently large.
	Let $\mathcal{T}_n$ with element-wise, face-wise, and edge-wise degrees $\{n_T\}_{T\in\mathcal{T}_n}$, $\{m_F\}_{F\in\mathcal{F}_n}$ and $\{m_E\}_{E\in\mathcal{E}_n}$ be generated by Algorithm \ref{alg1}, Definitions \ref{degree design strategy} and let the conforming element-wise polynomial space be given by \eqref{def:vn}. Then for $q=\ell,\ell+1$, there holds
	\begin{align}
		\Vert \lambda_q-\Pi_{\mathcal{T}_n}\lambda_q\Vert_{\infty, T}&\lesssim   \mathrm{C}_{\rm bpa}h_T |\lambda_q|_{W^{1,\infty}(T)} &&\text{for }T\in\mathcal{M}_n,\label{error t2}\\
		\Vert \lambda_q-\Pi_{\mathcal{T}_n}\lambda_q\Vert_{\infty, T}&\lesssim \cstab{n_T} \mathrm{C}_{\rm bpa}C(\Tilde{n}) \left(\frac{\mathrm{C}_{\rm const}h_T}{\Tilde{n}}\right)^{\Tilde{n}}|\lambda_q|_{W^{\tilde{n},\infty}(T)}&&\text{for }T\in\mathcal{T}_n\backslash\mathcal{M}_n,\label{error t1}
	\end{align}
	where $\mathrm{C}_{\rm bpa}$ is a bounded constant independent of $q$, $\Tilde{n}$ and $h_{\hat{T}}$, $\Tilde{n}:=\min_{E\in\mathcal{E}_T}\{m_{E}\}+1$, $\mathrm{C}_{\rm const}:=\frac{h_{\hat{T}}}{\rho_{\hat{T}}}=\frac{3+\sqrt{3}}{\sqrt{2}}$, and $|\cdot|_{W^{k,\infty}(D)}$ is the $W^{k,\infty}(D)$ semi-norm.
Furthermore, if the slope parameter $\mu$ satisfies the following condition for all $T\in\mathcal{T}_n\backslash\mathcal{M}_n$
	\begin{equation}\label{eq:mu-cond}
		\ln\left(\cstab{\mu \ell_T} C(n_T+1)C_{\lambda_q,T}^{\left\lceil \mu \ell_T+1 \right\rceil}\right)+\left\lceil \mu \ell_T+1 \right\rceil \left(\ln \mathrm{C}_{\rm const}+F(\ell_T)-\ln\left\lceil \mu \ell_T+1 \right\rceil\right)
		\lesssim \ln\left(C_{\max}^q\right)+F(1),
	\end{equation}
	 then there holds
	\begin{align*}
		\max_{T\in \mathcal{T}_n\backslash\mathcal{M}_n}\Vert \lambda_q-\Pi_{\mathcal{T}_n}\lambda_q\Vert_{\infty, T}\lesssim
		\min_{T\in\mathcal{M}_n}\Vert\lambda_q-\Pi_{\mathcal{T}_n}\lambda_q\Vert_{\infty, T}.
	\end{align*}
	Here, $C_{\max}^q:=\max_{T\in\mathcal{M}_n}  |\lambda_q|_{W^{1,\infty}(T)}$, $C_{\lambda_q,T}^{\left\lceil \mu \ell_T+1 \right\rceil}:=|\lambda_q|_{W^{\left\lceil \mu \ell_T+1 \right\rceil,\infty}(T)}$, and $F(\ell):=\ln h_1-\frac{n-\ell}{3}\ln 2$ with $h_1$ being the initial mesh size.
\end{theorem}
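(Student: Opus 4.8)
The plan is to reduce both error estimates to the local quasi-optimal bound of Theorem \ref{local error theorem}, pulled back to the reference element $\hat T$ through the affine map $M_T$, and then to pick the best competitor polynomial $p\in\hat P$ according to the regularity of $\lambda_q$ available on $T$. Writing $\hat f:=\lambda_q\circ M_T$, Theorem \ref{local error theorem} gives $\|\lambda_q-\Pi_{\mathcal T_n}\lambda_q\|_{\infty,T}=\|\hat f-\Pi\hat f\|_{\infty,\hat T}\lesssim \cstab{n_T}\inf_{p\in\hat P}\|\hat f-p\|_{\infty,\hat T}$, so everything hinges on choosing $p$ well. For $T\in\mathcal M_n$ the degree is frozen at $n_T=2$ and only Lipschitz regularity is used; for $T\in\mathcal T_n\backslash\mathcal M_n$ the function is analytic and a high-order Taylor competitor is used, with effective degree limited by the smallest edge degree $\tilde n=\min_{E\in\mathcal E_T}m_E+1$. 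The domination statement then follows by comparing the logarithms of the two resulting bounds.

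For $T\in\mathcal M_n$, I would first note that $\cstab{n_T}=\cstab{2}$ is an absolute constant. Taking $p$ to be the first-order Taylor polynomial of $\hat f$ at the barycenter of $\hat T$ yields $\inf_{p\in\hat P}\|\hat f-p\|_{\infty,\hat T}\lesssim h_{\hat T}|\hat f|_{W^{1,\infty}(\hat T)}$, and the affine change of variables converts $|\hat f|_{W^{1,\infty}(\hat T)}$ into $\|A_T\|\,|\lambda_q|_{W^{1,\infty}(T)}$ with $\|A_T\|\lesssim h_T/\rho_{\hat T}$, the sup-norm being invariant. Collecting the shape-regularity constants into $\mathrm{C}_{\rm bpa}$ gives \eqref{error t2}.

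For $T\in\mathcal T_n\backslash\mathcal M_n$, the first step is the inclusion $P_{\tilde n-1}(\hat T)\subseteq\hat P$: a total-degree-$(\tilde n-1)$ polynomial restricts to degree $\le\tilde n-1=\min_E m_E\le m_E$ on each edge, to degree $\le m_F$ on each face (since $m_F\ge m_E$ by Definition \ref{degree design strategy}), and has total degree $\le n_T$, so it lies in $\hat P$. Since $\hat f$ is analytic, the scaled Taylor remainder then gives
\[
\inf_{p\in\hat P}\|\hat f-p\|_{\infty,\hat T}\le \inf_{p\in P_{\tilde n-1}(\hat T)}\|\hat f-p\|_{\infty,\hat T}\lesssim C(\tilde n)\Big(\tfrac{h_{\hat T}}{\tilde n}\Big)^{\tilde n}|\hat f|_{W^{\tilde n,\infty}(\hat T)}.
\]
Pulling the $\tilde n$-th order seminorm back through $M_T$ produces a factor $\|A_T\|^{\tilde n}\lesssim(h_T/\rho_{\hat T})^{\tilde n}$, and since $\mathrm{C}_{\rm const}=h_{\hat T}/\rho_{\hat T}$ the two powers combine exactly into $(\mathrm{C}_{\rm const}h_T/\tilde n)^{\tilde n}$; multiplying by $\cstab{n_T}$ yields \eqref{error t1}.

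For the dominance claim I would take logarithms. Using the longest-edge-bisection history of Definition \ref{layer} (each sweep through the three edges halves the diameter, so $h_T\simeq h_1 2^{-(n-\ell_T)/3}$, i.e. $\ln h_T\simeq F(\ell_T)$), and inserting the layer-determined degrees $n_T=\lceil\mu\ell_T\rceil$, the attendant $\tilde n=\lceil\mu\ell_T+1\rceil=n_T+1$, $C(\tilde n)=C(n_T+1)$, and $|\lambda_q|_{W^{\tilde n,\infty}(T)}=C_{\lambda_q,T}^{\lceil\mu\ell_T+1\rceil}$ into \eqref{error t1}, the logarithm of $\max_{T\in\mathcal T_n\backslash\mathcal M_n}\|\lambda_q-\Pi_{\mathcal T_n}\lambda_q\|_{\infty,T}$ becomes precisely the left-hand side of \eqref{eq:mu-cond}. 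On the other side, the marked elements are the most refined ones (layer $\ell_T=1$) with comparable diameters $\simeq e^{F(1)}$, so \eqref{error t2} places $\min_{T\in\mathcal M_n}\|\lambda_q-\Pi_{\mathcal T_n}\lambda_q\|_{\infty,T}$ at the level $e^{F(1)}C_{\max}^q$ up to constants, whose logarithm is the right-hand side of \eqref{eq:mu-cond}; exponentiating \eqref{eq:mu-cond} then gives the desired domination. I expect this final balancing to be the main obstacle: one must verify that $F$ faithfully records the diameter decay under the recursive longest-edge bisection (which in 3D is only quasi-uniform over triples of bisections) and that the minimum marked-element error is genuinely controlled from below by $e^{F(1)}C_{\max}^q$ — that is, that the seminorms do not degenerate across the refined patch — so that the two logarithmic sides of \eqref{eq:mu-cond} line up. The explicit $(\,\cdot/\tilde n)^{\tilde n}$-type decay in the analytic estimate is the other delicate bookkeeping point, since it is what lets the $p$-enrichment on unmarked elements beat the $h$-refinement rate on $\mathcal M_n$.
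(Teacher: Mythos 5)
Your reconstruction is correct and follows essentially the route the paper takes (the paper simply defers to Theorem 5.4 of its 2D predecessor, noting only the two 3D modifications $h_{\hat T}/\rho_{\hat T}=(3+\sqrt{3})/\sqrt{2}$ and $h_T\sim h_1 2^{-(n-\ell_T)/3}$, both of which you supply): local quasi-optimality of $\Pi$ from Theorem \ref{local error theorem}, a low-order competitor on marked elements versus a degree-$(\tilde n-1)$ Taylor competitor on unmarked ones after verifying $P_{\tilde n-1}(\hat T)\subseteq\hat P$, and a logarithmic balancing that turns \eqref{eq:mu-cond} into the domination statement. The two soft spots you flag yourself --- on marked elements containing branch points the linear Taylor competitor should be replaced by the constant one (same $h_T$ rate, no differentiability needed), and the final domination implicitly needs a lower bound on the marked-element error --- are present in the paper's own (deferred) treatment as well and do not distinguish your argument from it.
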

\begin{proof}
	The proof is similar to Theorem 5.4 in \cite{wang2023dispersion2}, but with $\frac{h_{\hat{T}}}{\rho_{\hat{T}}}=\frac{3+\sqrt{3}}{\sqrt{2}}$ and $h_T\sim h_12^{-(n-\ell_T)/3}$.
\end{proof}	
\subsection{Global approximation error}
Now assuming that the slope parameter $\mu$ in Definition \ref{degree design strategy} satisfies \eqref{eq:mu-cond}, we present the convergence rates of Algorithm \ref{alg2} when the number of crossings among $\{\lambda_q: \max\{1,\ell-1\}\leq q \leq \ell+2\}$ 
 is finite and infinite. We denote $N$ to be the number of sampling points in Algorithm \ref{alg2}. The proofs to Theorems \ref{exp} and \ref{algebraic} follow in a similar manner to that of \cite[Theorems 5.5 and 5.6]{wang2023dispersion2}.

\begin{theorem}[Convergence rate for Algorithm \ref{alg2} with finite branch points]\label{exp}
Let $n_{\operatorname{iter}}\in\mathbb{N}$ be sufficiently large.
Let $\mathcal{T}_{n_{\operatorname{iter}}}$ with element-wise, face-wise, and edge-wise degrees $\{n_T\}_{T\in\mathcal{T}_n}$, $\{m_F\}_{F\in\mathcal{F}_n}$, and $\{m_E\}_{E\in\mathcal{E}_n}$ be generated by Algorithm \ref{alg1}, Definition \ref{degree design strategy}, the conforming element-wise polynomial space be defined in \eqref{def:vn}. If the slope parameter $\mu$ satisfies \eqref{eq:mu-cond} and the number of crossing among $\{\lambda_q: \max\{1,\ell-1\}\leq q \leq \ell+2\}$ is finite, then there holds
\begin{equation*}
\Vert \lambda_q-\Pi_{\mathcal{T}_{n_{\operatorname{iter}}}}\lambda_q\Vert_{\infty,\mathcal{B}}\lesssim \exp(-b N^{\frac{1}{4}})| \lambda|_{W^{1,\infty}(\mathcal{B})}\qquad \text{ for } q=\ell \text{ and }\ell+1.
\end{equation*}
Here, the constant $b:=\frac{1}{3}\ln 2$. 
\end{theorem}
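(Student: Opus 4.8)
The plan is to exploit the two-regime structure of the element-wise error furnished by Theorem \ref{property}: on the unmarked elements the error decays super-algebraically thanks to the linearly growing polynomial degrees prescribed in Definition \ref{degree design strategy}, whereas on the marked elements surrounding the branch points only first-order convergence in $h_T$ is available. Since the slope parameter $\mu$ is assumed to satisfy \eqref{eq:mu-cond}, the domination estimate at the end of Theorem \ref{property} guarantees that the marked-element error controls the global error, so that
\begin{equation*}
\Vert \lambda_q-\Pi_{\mathcal{T}_{n_{\operatorname{iter}}}}\lambda_q\Vert_{\infty,\mathcal{B}}
=\max_{T\in\mathcal{T}_{n_{\operatorname{iter}}}}\Vert \lambda_q-\Pi_{\mathcal{T}_{n_{\operatorname{iter}}}}\lambda_q\Vert_{\infty,T}
\lesssim \max_{T\in\mathcal{M}_{n_{\operatorname{iter}}}}\Vert \lambda_q-\Pi_{\mathcal{T}_{n_{\operatorname{iter}}}}\lambda_q\Vert_{\infty,T}.
\end{equation*}
This reduces the whole estimate to the behaviour on the few marked elements trapping the singular set.

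On each marked element I would invoke the first bound \eqref{error t2}, namely $\Vert \lambda_q-\Pi_{\mathcal{T}_{n_{\operatorname{iter}}}}\lambda_q\Vert_{\infty,T}\lesssim \mathrm{C}_{\rm bpa}\,h_T\,|\lambda_q|_{W^{1,\infty}(T)}$, and replace the local seminorm by the global one $|\lambda_q|_{W^{1,\infty}(\mathcal{B})}$, which is finite by the Lipschitz continuity in Theorem \ref{thm:lipchitz}. By Theorem \ref{refining mechanism} every element meeting $\mathbf{S}$ is marked at each iteration, hence a marked element in the final mesh has been bisected at essentially every step; using $h_T\sim h_1 2^{-(n_{\operatorname{iter}}-\ell_T)/3}$ (three longest-edge bisections halving the diameter in $3$D) together with $\ell_T=O(1)$ on such elements, I obtain
\begin{equation*}
\max_{T\in\mathcal{M}_{n_{\operatorname{iter}}}}\Vert \lambda_q-\Pi_{\mathcal{T}_{n_{\operatorname{iter}}}}\lambda_q\Vert_{\infty,T}
\lesssim h_1\,2^{-n_{\operatorname{iter}}/3}\,|\lambda_q|_{W^{1,\infty}(\mathcal{B})}
= h_1\,\exp\!\big(-b\,n_{\operatorname{iter}}\big)\,|\lambda_q|_{W^{1,\infty}(\mathcal{B})},
\end{equation*}
with $b=\tfrac{1}{3}\ln 2$. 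Thus the error decays exponentially in the iteration count $n_{\operatorname{iter}}$.

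It then remains to convert the rate in $n_{\operatorname{iter}}$ into a rate in the number of sampling points $N$, and this is where the exponent $1/4$ and the finiteness hypothesis enter. The finiteness of the crossings among $\{\lambda_q:\max\{1,\ell-1\}\le q\le \ell+2\}$ means that $\mathbf{S}$ consists of finitely many branch points, so the adaptive refinement builds a geometric mesh toward finitely many points with only $O(1)$ elements in each layer $\ell\in\{1,\dots,n_{\operatorname{iter}}\}$. On a layer-$\ell$ element the degree is $n_T=\lceil\mu\ell\rceil$ by Definition \ref{degree design strategy}, so its number of Lobatto tetrahedral sampling points behaves like $\dim(\hat P)\sim n_T^3\sim \ell^3$ in three dimensions; summing over layers gives
\begin{equation*}
N\sim\sum_{\ell=1}^{n_{\operatorname{iter}}}\ell^3\sim n_{\operatorname{iter}}^4,
\qquad\text{hence}\qquad n_{\operatorname{iter}}\gtrsim N^{1/4}.
\end{equation*}
Combining this with the previous display yields the claimed bound $\Vert \lambda_q-\Pi_{\mathcal{T}_{n_{\operatorname{iter}}}}\lambda_q\Vert_{\infty,\mathcal{B}}\lesssim \exp(-bN^{1/4})\,|\lambda|_{W^{1,\infty}(\mathcal{B})}$. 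I expect the main obstacle to be the sampling-point count: one must verify that the longest-edge-bisection closure creates no more than boundedly many elements per layer, so that the cubic-in-degree interior-point count sums to $n_{\operatorname{iter}}^4$ rather than a larger power. This is precisely the step in which the finite-branch-point hypothesis is indispensable, since without it one only recovers the algebraic rate of Theorem \ref{algebraic}.
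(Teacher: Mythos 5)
Your proposal follows essentially the same route as the paper's proof: reduce to the marked elements via the domination estimate of Theorem \ref{property} under \eqref{eq:mu-cond}, apply the first-order bound \eqref{error t2} with $h_T\sim h_1 2^{-n_{\operatorname{iter}}/3}$, and convert to the rate in $N$ via $N\sim n_{\operatorname{iter}}^4$. The only difference is that you spell out the layer-by-layer counting argument behind $N\sim n_{\operatorname{iter}}^4$ (boundedly many elements per layer, each with $\mathcal{O}(\ell^3)$ sampling points), which the paper simply asserts; this is a correct and slightly more detailed rendering of the same argument.
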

\begin{proof}
Note that when the number of iterations $n_{\operatorname{iter}}$ is sufficiently large, 
\begin{align}\label{eq:3333}
h_T\sim h_1 2^{-n_{\operatorname{iter}}/3} \text{ for all } T\in \mathcal{M}_{n_{\operatorname{iter}}}, \text{ and }N\sim {n_{\operatorname{iter}}}^4
\end{align}
if the number of branch points is finite. Combining with \eqref{eq:mu-cond} and \eqref{error t2}, we derive 
\begin{align*}
\Vert \lambda_q-\Pi_{\mathcal{T}_{n_{\operatorname{iter}}}}\lambda_q\Vert_{\infty,\mathcal{B}}
&\lesssim \min_{T\in\mathcal{M}_{\operatorname{iter}}}\Vert\lambda_q\Pi_{\mathcal{T}_{\operatorname{iter}}}\lambda_q\Vert_{\infty, T}\\
&\lesssim\mathrm{C}_{\rm bpa}h_T |\lambda_q|_{W^{1,\infty}(T)}\\
&\lesssim\mathrm{C}_{\rm bpa}h_1 (2^{1/3})^{-N^{\frac{1}{4}}} |\lambda_q|_{W^{1,\infty}(T)}.
\end{align*}
Here, we have used \eqref{eq:3333} in the last step, and this proves the desired assertion.
\end{proof}

\begin{theorem}[Convergence rate for Algorithm \ref{alg2} with infinite branch points]\label{algebraic}
Under the conditions of Theorem \ref{exp}, if the number of crossing among $\{\lambda_q: \max\{1,\ell-1\}\leq q \leq \ell+2\}$ is infinite, then
\begin{equation*}
\Vert \lambda_q-\Pi_{\mathcal{T}_{n_{\operatorname{iter}}}}\lambda_q\Vert_{\infty,\mathcal{B}}\lesssim N^{-1}| \lambda_j|_{W^{1,\infty}(\mathcal{B})}\qquad \text{ for } q=\ell \text{ and }\ell+1.
\end{equation*}
\end{theorem}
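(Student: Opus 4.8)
The plan is to follow the proof of Theorem~\ref{exp} verbatim up to the very last substitution, changing only the relation between the sampling count $N$ and the iteration number $n_{\operatorname{iter}}$. First I would apply Theorem~\ref{property}: because $\mu$ obeys the slope condition \eqref{eq:mu-cond}, the error on the smooth unmarked cells is dominated by the error on the marked cells, so
\begin{equation*}
\Vert \lambda_q-\Pi_{\mathcal{T}_{n_{\operatorname{iter}}}}\lambda_q\Vert_{\infty,\mathcal{B}}
\lesssim \max_{T\in\mathcal{M}_{n_{\operatorname{iter}}}}\Vert \lambda_q-\Pi_{\mathcal{T}_{n_{\operatorname{iter}}}}\lambda_q\Vert_{\infty,T}
\lesssim \mathrm{C}_{\rm bpa}\,h_{\min}\,|\lambda_q|_{W^{1,\infty}(\mathcal{B})},
\end{equation*}
where the second inequality is the marked-cell bound \eqref{error t2} and $h_{\min}:=\max_{T\in\mathcal{M}_{n_{\operatorname{iter}}}}h_T$ is the diameter of the finest marked cells. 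Exactly as in \eqref{eq:3333}, three successive longest-edge bisections halve a tetrahedron's diameter, so for $n_{\operatorname{iter}}$ large one has $h_{\min}\sim h_1\,2^{-n_{\operatorname{iter}}/3}$. Up to here the argument is identical to the finite-crossing case.

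The decisive and genuinely new ingredient is the scaling of $N$ in the infinite-crossing regime, which must replace the relation $N\sim n_{\operatorname{iter}}^4$ used in \eqref{eq:3333}. In the finite case only $O(1)$ cells meet $\mathbf{S}$ at each layer, and the sampling cost is dominated by the $hp$-growth $\sum_{\ell}\ell^3\sim n_{\operatorname{iter}}^4$ of the degree-graded unmarked cells. When the crossings are infinite, Theorem~\ref{refining mechanism} forces the algorithm to keep marking and bisecting the whole growing family of cells meeting the (measure-zero but infinite) singular set, so the number of finest-scale marked cells is no longer bounded. I would show that this number grows like $h_{\min}^{-1}=2^{n_{\operatorname{iter}}/3}$ and that it controls the total count, yielding $N\sim h_{\min}^{-1}$, i.e. $h_{\min}\sim h_1 N^{-1}$.

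Substituting $h_{\min}\sim h_1\,2^{-n_{\operatorname{iter}}/3}\sim h_1 N^{-1}$ into the first display then closes the argument:
\begin{equation*}
\Vert \lambda_q-\Pi_{\mathcal{T}_{n_{\operatorname{iter}}}}\lambda_q\Vert_{\infty,\mathcal{B}}
\lesssim \mathrm{C}_{\rm bpa}\,h_{\min}\,|\lambda_q|_{W^{1,\infty}(\mathcal{B})}
\lesssim N^{-1}\,|\lambda_q|_{W^{1,\infty}(\mathcal{B})},\quad q=\ell,\ell+1.
\end{equation*}

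I expect the count $N\sim h_{\min}^{-1}$ to be the crux. The reduction to the marked-cell error and the bisection law $h_{\min}\sim h_1 2^{-n_{\operatorname{iter}}/3}$ carry over unchanged from Theorem~\ref{exp}; what must be argued carefully is how fast $\mathcal{M}_{n_{\operatorname{iter}}}$ grows once $\mathbf{S}$ is infinite and that the degree-graded unmarked cells remain subdominant in the total degree count, so that $N$ is governed by the number of finest marked cells rather than by the $hp$-enrichment. This is also the place where any parasitic factors that are polynomial in $n_{\operatorname{iter}}$ (hence logarithmic in $N$) would have to be tracked and absorbed to reach the clean algebraic order $N^{-1}$; the estimate follows the same route as \cite[Theorem~5.6]{wang2023dispersion2}, where it is precisely the concentration of the refinement on the lower-dimensional singular set that makes $N^{-1}$ improve on the $N^{-1/3}$ rate of uniform refinement in three dimensions.
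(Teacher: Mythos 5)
Your argument tracks the paper's proof almost line for line: reduce the global error to the marked elements via Theorem \ref{property}, invoke the marked-cell bound \eqref{error t2}, use $h_T\sim h_1 2^{-n_{\operatorname{iter}}/3}$ from three-bisections-per-halving, and close by relating $N$ to $n_{\operatorname{iter}}$. The one place where you genuinely diverge is the counting relation, and it is worth comparing carefully. The paper's proof asserts $N\sim 2^{n_{\operatorname{iter}}}$ in the infinite-crossing case; combined with $h_T\sim h_1 2^{-n_{\operatorname{iter}}/3}$ this would give $h_T\sim h_1 N^{-1/3}$ and hence only the rate $N^{-1/3}$ — i.e., as written, the paper's own scaling does not produce the stated $N^{-1}$ conclusion and is indistinguishable from the uniform-refinement count of Remark \ref{uniform}. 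Your relation $N\sim h_{\min}^{-1}\sim 2^{n_{\operatorname{iter}}/3}$ is the one that actually yields $N^{-1}$ and is consistent with the "triple rate over uniform" claim of Remark \ref{uniform} and with the first-order convergence observed numerically in Section \ref{subsec:eff}; so on this point your version is the coherent one and the paper's appears to contain a typo (presumably $2^{n_{\operatorname{iter}}/3}$ was intended).

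That said, you correctly identify that $N\sim h_{\min}^{-1}$ is the crux, and neither you nor the paper actually proves it. The count depends on the Hausdorff dimension of the singular set $\mathbf{S}$: if the infinitely many crossings concentrate on a curve in the three-dimensional Brillouin zone, the number of finest marked cells is $\sim h_{\min}^{-1}$ and your argument closes; if $\mathbf{S}$ is a two-dimensional surface the count is $\sim h_{\min}^{-2}$ and the rate degrades to $N^{-1/2}$. Theorem \ref{thm:lipchitz} only guarantees that $\mathbf{S}$ has Lebesgue measure zero, which does not pin down this exponent. You should also verify your secondary claim that the $hp$-enrichment on unmarked cells (contributing $\sum_\ell \ell^3$ internal nodes per cell of layer $\ell$) stays subdominant to the marked-cell count; this is plausible but not automatic once the marked family itself grows geometrically. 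Finally, a cosmetic point: calling $\max_{T\in\mathcal{M}_{n_{\operatorname{iter}}}}h_T$ "$h_{\min}$" is confusing, though harmless here since all marked cells at the final iteration have comparable diameter.
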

\begin{proof}
Note that when the number of iterations $n_{\operatorname{iter}}$ is sufficiently large, 
\begin{align*}
h_T\sim h_1 2^{-n_{\operatorname{iter}}/3} \text{ for all } T\in \mathcal{M}_{n_{\operatorname{iter}}}, \text{ and }N\sim 2^{n_{\operatorname{iter}}}
\end{align*}
if the number of branch points is infinite. Then a similar argument as in the proof to Theorem \ref{exp} leads to the desired assertion.
\end{proof}

\begin{remark}\label{uniform}
When uniform refinement and fixed low-degree element-wise interpolation are utilized, the number of elements is approximately $\sim 2^{n_{\operatorname{iter}}}$, which indicates a convergence rate of $\mathcal{O}(N^{-1/3})$. Furthermore, the decay rate remains only $\mathcal{O}(N^{-1/3})$ for the global polynomial interpolation method proposed in \cite{wang2023dispersion}. Hence, Algorithm \ref{alg2} achieves at least triple convergence rate compared with the uniform refinement with the fixed low-degree interpolation (degree=2 in our numerical experiment), as well as the global polynomial interpolation method.
\end{remark}


\section{Numerical experiments and discussions}\label{Section optimization}
Now we apply Algorithm \ref{alg2} to the design of 3D PhCs with wide band gaps. We take
two cubic unit cells $\Omega=[0,a]^3$ with lattice constant $a\in\mathbb{R}^{+}$ filled with air embedded with six silicon blocks and a central silicon sphere. The length, width and height of the blocks, and radius of the sphere are design variables. Although the assumption of a symmetric unit cell facilitates computation, a symmetry reduction of the unit cell can open wider band gaps or create new band gaps \cite{gazonas2006genetic,dong2014topology,dong2015reducing}. Thus, adding more design variables to describe an asymmetrical unit cell has great potential to maximize photonic band gaps. Algorithm \ref{alg2} allows greatly reducing the associated high computational complexity. We utilize \cite{Chen:2008ifem} to implement the iterative LEB algorithm involved in Algorithm \ref{alg1}.

\subsection{Experimental setting}

First we describe two experimental settings, denoted as Model 1 and Model 2 below.

\begin{figure}[hbt!]
\centering
\subfigure[Model 1: unit cell]{\label{fig1}
\includegraphics[width=.3\textwidth,trim={8cm 4cm 9cm 0.5cm},clip]{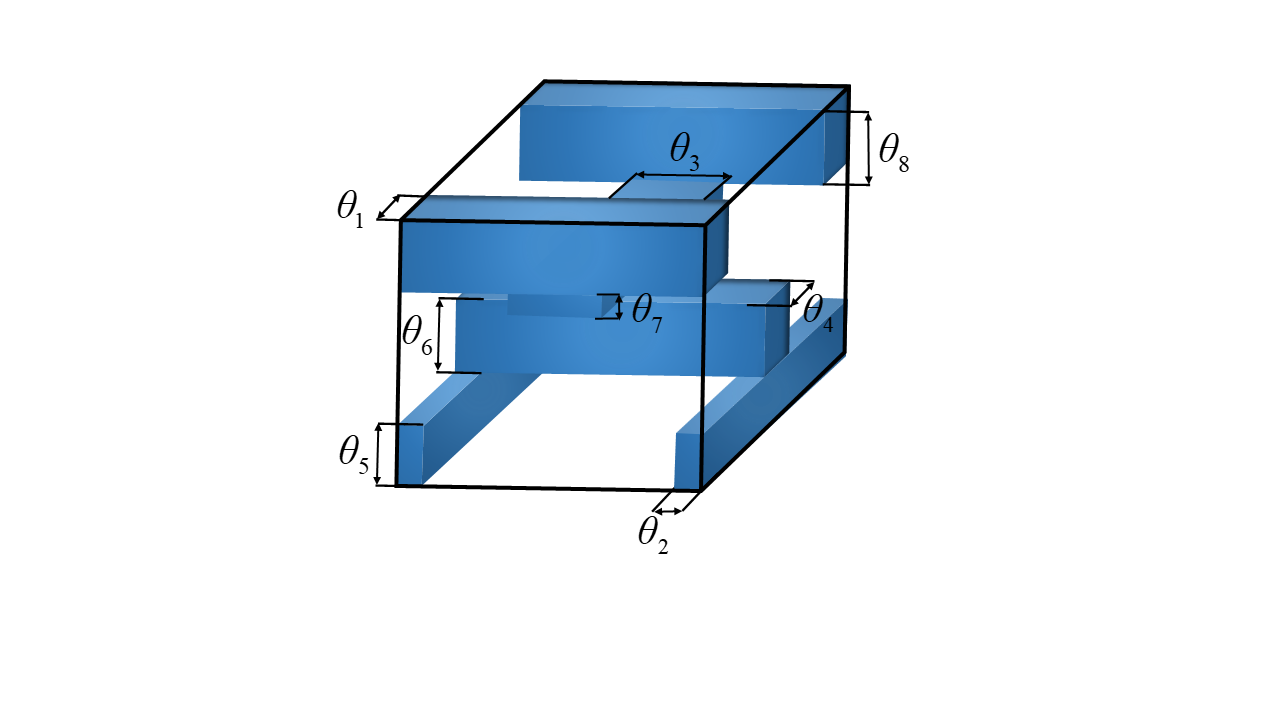}
}%
\quad
\subfigure[Model 1: first Brillouin zone]{\label{fig2}
\includegraphics[width=.27\textwidth,trim={1.2cm 0.7cm 0.5cm 0.5cm},clip]{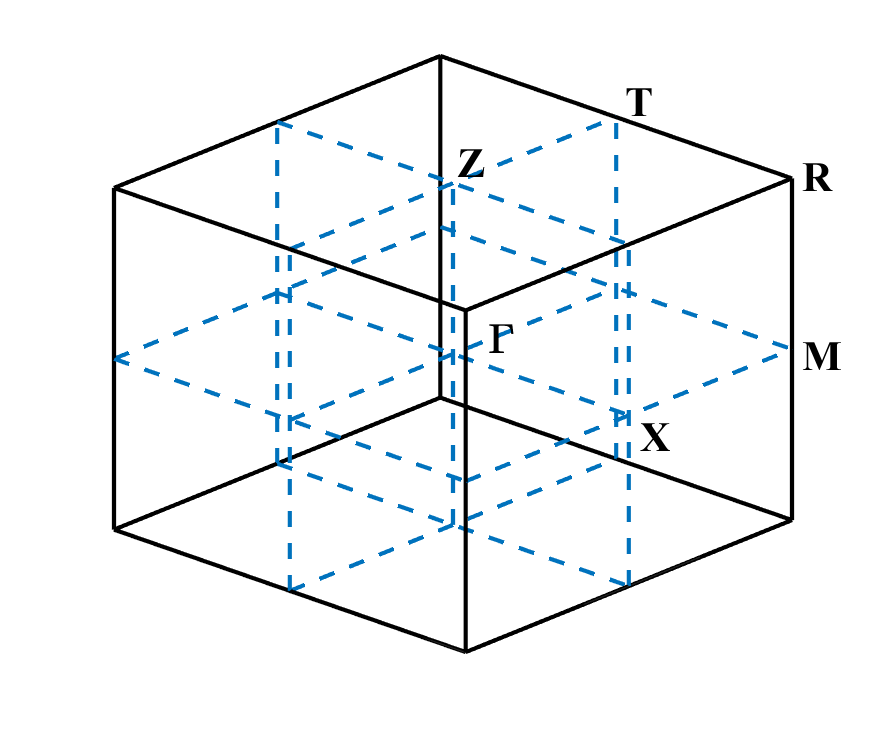}
}%
\\
\subfigure[Model 2: unit cell]{\label{fig1_2}
	\includegraphics[width=.3\textwidth,trim={4cm 3cm 10cm 0.5cm},clip]{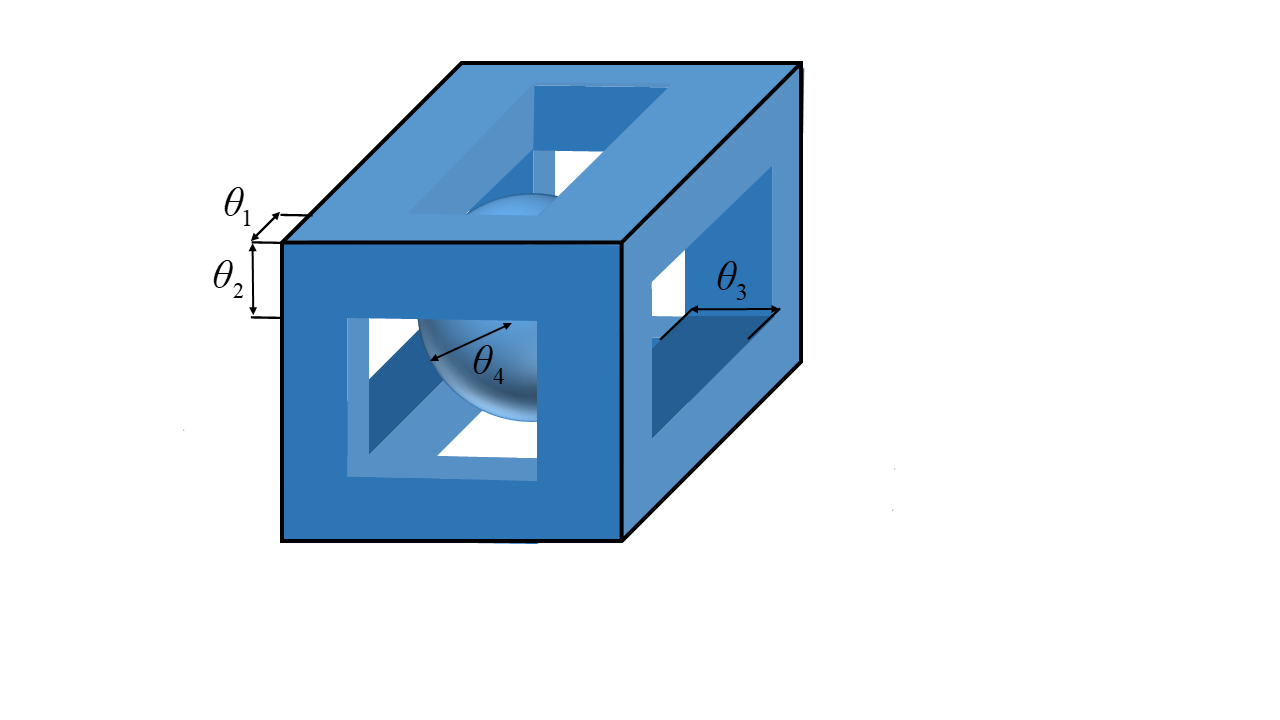}
}%
\quad
\subfigure[Model 2: first Brillouin zone]{\label{fig2_2}
	\includegraphics[width=.27\textwidth,trim={1.2cm 0.7cm 0.5cm 0.5cm},clip]{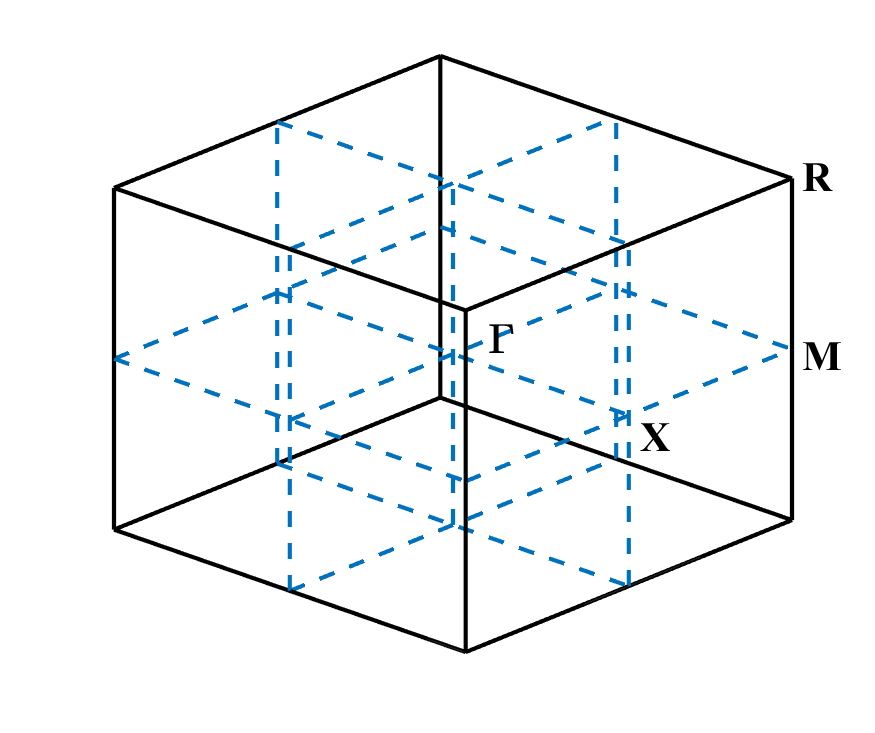}
}%
\centering
\caption{Unit cells and their corresponding first Brillouin zones for Model 1 and Model 2. }
\label{unit cell}
\end{figure}

\begin{model}
There are six silicon blocks layered inside a cubic unit cell $\Omega=[0,a]^3$, cf. Fig. \ref{fig1} \cite{lin1998three}. The relative permittivity $\epsilon$ inside the silicon blocks is $\epsilon=13$. Their arrangement leads to $xy$-plane symmetry within the unit cell, but there is no symmetry along the $z$-axis. The first Brillouin zone $\mathcal{B}$ is shown in Fig. \ref{fig2}, where the high-symmetry points are denoted as $\Gamma=\frac{1}{a}(0,0,0)^T, X=\frac{1}{a}(\pi,0,0)^T, M=\frac{1}{a}(\pi,\pi,0)^T, R=\frac{1}{a}(\pi,\pi,\pi)^T, T=\frac{1}{a}(0,\pi,\pi)^T, Z=\frac{1}{a}(0,0,\pi)^T$. The IBZ is the polyhedron formed by connecting these high-symmetry points. This type of structure has been successfully applied in preclinical studies \cite{mavciulaitis2015preclinical}.
There are eight design parameters $\{\theta_i\}_{i=1}^8$, cf. Fig. \ref{fig1}. We fix the block thickness as $\theta_5=\theta_6=\theta_7=\theta_8=0.25a$ and only seek the optimal parameters $\{\theta_i\}_{i=1}^4$ to maximize the band gap over the admissible set
\begin{align*}
\Theta_{\operatorname{ad},1}:=\left\{(\theta_1,\theta_2,\theta_3,\theta_4): \theta_1,\theta_2\in[0,0.5a]\text{ and }\theta_3,\theta_4\in[0,a]\right\}.
\end{align*}
It is known  that there is a band gap between 4th and 5th bands when $\theta_1=\theta_2=0.125a$ and $\theta_3=\theta_4=0.25a$ \cite{bulovyatov2010parallel}. Note that the commonly used path to characterize the band function $\Gamma\to X\to M\to R\to T\to Z\to\Gamma\to M$ is insufficient to accurately detect band gaps. We will adaptively generate sampling points in the IBZ by Algorithm \ref{alg1} and approximate band functions element-wisely by Algorithm \ref{alg2}.
\end{model}

\begin{model} The unit cell also contains silicon blocks, cf. Fig. \ref{fig1_2}. Additionally, there is a silicon sphere with a radius of $\theta_4$ at the center of the unit cell. This structure imparts symmetry to the unit cell along the $x,  y,$ and $z$ axes. Therefore, the high-symmetry points in $\mathcal{B}$ are $\Gamma=\frac{1}{a}(0,0,0)^T$, $X=\frac{1}{a}(\pi,0,0)^T$, $M=\frac{1}{a}(\pi,\pi,0)^T$ and $R=\frac{1}{a}(\pi,\pi,\pi)^T$, and the region enclosed by the lines connecting them represents IBZ. This photonic crystal structure was fabricated in \cite{liu20223d} by a sacrificial-scaffold mediated two-photon lithography strategy and using a degradable hydrogel scaffold.
There are four design parameters $\{\theta_i\}_{i=1}^4$, which are the dimensions (length and width) of the silicon blocks and the radius of the silicon sphere. We take the following admissible set
\begin{align*}
\Theta_{\operatorname{ad},2}:=\big\{(\theta_1,\theta_2,\theta_3,\theta_4)\in[0,0.5a]^4: (\theta_i^2+\theta_j^2)^{1/2}+\theta_4\leq \sqrt{2}a/2\text{ for }i,j=1,2,3 \text{ and } i\neq j\big\}
\end{align*}
to ensure that there is no overlap between the silicon sphere and the silicon blocks. It can be verified experimentally that there is a band gap when $\theta_1=\theta_2=\theta_3=\frac{1}{6}a$ and $\theta_4=\frac{11}{30}a$.
\end{model}

Next we formulate the band gap maximization problem as PDE constrained optimization. The objective is to determine the unit cell that maximizes the band gap, i.e., the distance between two adjacent band functions $\omega_{\ell}$ and $\omega_{\ell+1}$. One intuitive way is to define the objective function by
\begin{equation}\label{optimization 1}
\hat{\phi}_1(\Bar{\mathbf{\theta}};\ell)=\min_{\mathbf{k}\in\mathcal{B}}\omega_{\ell+1}(\mathbf{k};\Bar{\mathbf{\theta}})-\max_{\mathbf{k}\in\mathcal{B}}\omega_{\ell}(\mathbf{k};\Bar{\mathbf{\theta}}),
\end{equation}
with the design variable vector $\Bar{\mathbf{\theta}}:=[\theta_1,\theta_2,\theta_3,\theta_4]^T$ for shape-wise optimization. Usually, the objective function is normalized by the mean value in which the band gap is turned
\begin{equation}
\Bar{\omega}_{\ell}=\Big(\min_{\mathbf{k}\in\mathcal{B}}\omega_{\ell+1}(\mathbf{k};\Bar{\mathbf{\theta}})+\max_{\mathbf{k} \in\mathcal{B}}\omega_{\ell}(\mathbf{k};\Bar{\mathbf{\theta}})\Big)/2,
\end{equation}
which results in a modified objective function $\hat{\phi}_2(\Bar{\mathbf{\theta}};\ell)=\hat{\phi}_1(\Bar{\mathbf{\theta}};\ell)/\Bar{\omega}_{\ell}$. We normalized the objective function to indicate how the band gap improves relative to the mean band gap frequency since it is crucial to know not only the band gap width but also its location.

Since each eigenfrequency is obtained by taking the square root of the eigenvalue in equation \eqref{strong}, we use the following objective function
\begin{equation*}
\phi(\Bar{\mathbf{\theta}};\ell)=\frac{\min_{\mathbf{k}\in\mathcal{B}}\lambda_{\ell+1,h}(\mathbf{k};\Bar{\mathbf{\theta}})-\max_{\mathbf{k}\in\mathcal{B}}\lambda_{\ell,h}(\mathbf{k};\Bar{\mathbf{\theta}}))}{\left(\min_{\mathbf{k}\in\mathcal{B}}\lambda_{\ell+1,h}(\mathbf{k};\Bar{\mathbf{\theta}})+\max_{\mathbf{k}\in\mathcal{B}}\lambda_{\ell,h}(\mathbf{k};\Bar{\mathbf{\theta}})\right)/2}.
\end{equation*}
In sum, the optimization problem can then be formulated as
 \begin{align}\label{objective 1}
\max_{\Bar{\mathbf{\theta}}\in\Theta_{\operatorname{ad},i}}\phi(\Bar{\mathbf{\theta}};\ell)
\text{ for }i=1,2
 \end{align}
 subject to the following variational formulation with $j=\ell, \ell+1$,
\begin{equation*}
\left\{\begin{aligned}
a(\mathbf{u}_{j,h},\mathbf{v}; \mathbf{k})+b(p_{j,h},\mathbf{v};\mathbf{k})&=\lambda_{j,h} (\mathbf{u}_{j,h},\mathbf{v}),\quad\forall \mathbf{v} \in \mathbf{V}_h^\mathbf{k},\\
\overline{b(q,\mathbf{u}_{j,h};\mathbf{k})}&=0,\qquad\qquad\qquad\forall q \in Q_h^\mathbf{k}.
\end{aligned}\right.
\end{equation*}

\begin{remark}\label{another obj remark}
Typically, the unit cell chosen for optimization already possesses some band gaps under certain predefined parameters. For instance, given a specific set of parameters $\Bar{\mathbf{\theta}}$, there exists a band gap between the $4$th and $5$th band functions for Model 1, i.e., $\phi(\Bar{\mathbf{\theta}};4)>0$. Then we aim to solve \eqref{objective 1} with $\ell:=4$. On the contrary, if the unit cell being optimized does not possess any a priori band gaps, we can use the following objective function,
\begin{equation}\label{objective 2}
\phi(\Bar{\mathbf{\theta}})=\max_{n=1,\cdots,L-1}\frac{\min_{\mathbf{k}\in\mathcal{B}}\lambda_{n+1}(\mathbf{k};\Bar{\mathbf{\theta}})-\max_{\mathbf{k}\in\mathcal{B}}\lambda_{n}(\mathbf{k};\Bar{\mathbf{\theta}})}{\left(\min_{\mathbf{k}\in\mathcal{B}}\lambda_{n+1}(\mathbf{k};\Bar{\mathbf{\theta}})+\max_{\mathbf{k}\in\mathcal{B}}\lambda_{n}(\mathbf{k};\Bar{\mathbf{\theta}})\right)/2}.
\end{equation}
It aims to find an optimal parameter $\hat{\mathbf{\theta}}$ that maximizes the band gap among the first $L$ band functions. By maximizing the difference between the minimum and maximum values of adjacent band functions within the Brillouin zone $\mathcal{B}$, we can effectively enhance the band gap therebetween.
\end{remark}

Next we describe how to solving Problem \eqref{objective 1}, which is divided into two steps. The first step is to evaluate the objective function $\phi(\Bar{\theta}_i;\ell)$ for a given parameter $\Bar{\theta}_i$, which involves calculating $\lambda_{\ell,h}$ and $\lambda_{\ell+1,h}$ by Algorithm \ref{alg2}. The second step is to update the parameter from $\Bar{\theta}_i$ to $\Bar{\theta}_{i+1}$ to obtain a larger value in the objective function $\phi(\Bar{\theta}_{i+1};\ell)$. One repeats these two steps until a certain criterion is attained.
Note that solving \eqref{objective 1} is challenging since it is non-convex and the derivatives of the objective function are neither symbolically nor numerically available. Thus, we adopt some derivative-free algorithms to deal with this problem \eqref{objective 1}. Bayesian optimization (BO) is a popular method for solving optimization problems involving expensive objective functions. Roughly speaking, BO uses a computationally cheap surrogate model to construct a posterior distribution over the original expensive objective $\phi(\Bar{\mathbf{\theta}};\ell)$ using a limited number of observations. The statistics of the surrogate model are used to sample a new
point where the original $\phi(\Bar{\mathbf{\theta}};\ell)$ is then
evaluated. The incorporation of such a surrogate model
makes BO an efficient global optimization technique in terms of the number of function evaluations \cite{mockus1994application,jones1998efficient,streltsov1999non,sasena2002flexibility}. To the best of our knowledge, the application of BO to photonic band gap maximization remains largely unexplored. In this work, we employ MATLAB's built-in optimization solver \textit{bayesopt}. We summarize the procedure in Algorithm \ref{alg optimization}. For more details about the BO algorithm, we refer to  \cite{jones1998efficient,shahriari2015taking}.

\begin{algorithm}[hbt!]
\caption{Bayesian optimization procedure}\label{alg optimization}
\begin{algorithmic}
\Require $D_1=\left\{\left(\mathbf{\Bar{\theta}}_{1},\phi(\mathbf{\Bar{\theta}}_{1};\ell)\right)\right\}$: initial guess $\mathbf{\Bar{\theta}}_{1}$ and its corresponding observation $\phi(\mathbf{\Bar{\theta}}_{1};\ell)$ approximated by Algorithm \ref{alg2};

$\theta^i_{\min},\theta^i_{max}$, $i=1,2,\cdots4$: minimum and maximum values of the design variables;

$n_{\max}^2$: maximum number of loops.
\Ensure $\mathbf{\Bar{\theta}}$
\State $i \gets 1$
\While{$i \leq n_{\max}^2$ and $\phi(\mathbf{\Bar{\theta}}_{i};\ell)>\phi(\mathbf{\Bar{\theta}}_{1};\ell)$}

select new $\mathbf{\Bar{\theta}}_{i+1}$ by optimization of an acquisition function constructed by $D_i$ (using \textit{bayesopt});

get new observation $\phi(\mathbf{\Bar{\theta}}_{i+1};\ell)$ by Algorithm \ref{alg2} with $n_{\max}^1=8$ in Algorithm \ref{alg1};

augment data: 
$D_{i+1}=\left\{D_i,\left(\mathbf{\Bar{\theta}}_{i+1},\phi(\mathbf{\Bar{\theta}}_{i+1};\ell)\right)\right\}$;

update model;

$i \gets i+1$;
\EndWhile
\end{algorithmic}
\end{algorithm}

\subsection{Maximization of band gap}\label{subsec:max-gap}

Now we solve \eqref{objective 1} with $i=1$, $\ell=4$ for Model 1, and $i=2$, $\ell=2$ for Model 2 by Algorithm \ref{alg2}. We use the lowest order $\mathbf{k}$-modified N\'{e}d\'{e}lec edge elements \eqref{discrete variational 2} and discretize the computational domain $\Omega$ with a uniform mesh comprised of 24576 tetrahedral elements in Fig. \ref{mesh 1} for Model 1, 24650 tetrahedral elements in Fig. \ref{mesh 2} for Model 2. The employed mesh serves dual purposes: first, to fulfill periodic boundary conditions, ensuring a one-to-one correspondence of edges along opposing faces of the cubic domain; second, for Model 2, the tetrahedral mesh necessitates partitioning along the spherical material situated at the center of the cubic domain.

\begin{figure}[hbt!]
\centering
\subfigure[Model 1]{\label{mesh 1}
\includegraphics[width=.35\textwidth,trim={1.2cm 0.7cm 0.5cm 0.5cm},clip]{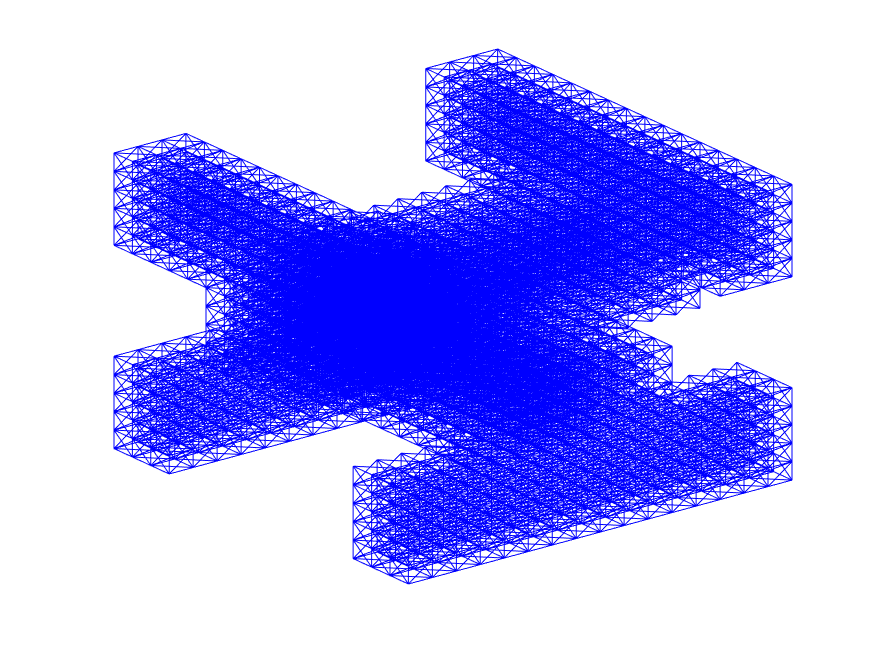}
}%
\quad
\subfigure[Model 2]{\label{mesh 2}
\includegraphics[width=.35\textwidth,trim={1.2cm 0.7cm 0.5cm 0.5cm},clip]{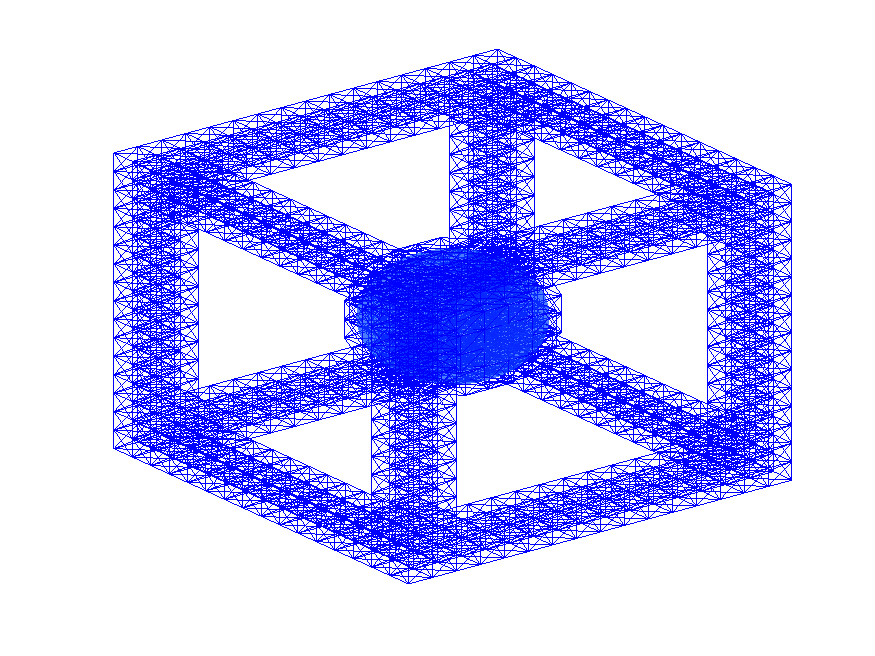}
}%
\centering
\caption{Uniform discretization of the unit cell $\Omega$. Here, only the mesh inside the silicon blocks and around the interior ball is depicted, and the same mesh size is utilized in the background.}
\label{mesh}
\end{figure}

Now we describe the initialization strategy for Algorithm \ref{alg optimization}. \\
(i): {Model 1}. The band structure of the cubic lattice with lattice constant $a$, and design variables $\theta_1=\theta_2=0.125a, \theta_3=\theta_4=\theta_5=\theta_6=\theta_7=\theta_8=0.25a$ is initially computed. The resulting band structure shows a band gap between the $4$th and $5$th bands with normalized band gap width $0.2159$. The first six band functions along the high symmetry points are shown in Fig. \ref{initial band 1}. Although there is already a band gap in the initial structure, band gap optimization facilitates a deeper understanding of how their sizes can affect the band gap via parameter optimization.\\
(ii): {Model 2}. In case $\theta_1=\theta_2=\theta_3=\frac{1}{6}a$, $\theta_4=\frac{11}{30}a$, PhCs with such a unit cell can forbid certain frequencies of light. We consider optimizing these four parameters with silicon blocks and sphere inside the unit cell to find a nearly optimal band gap range. The band structure with initial configuration is computed and the resulting band structure shows a band gap between the $2$nd and $3$rd bands with normalized band gap width $0.05626$. The first six band functions along the high symmetry points are shown in Fig. \ref{initial band 2}.

\begin{figure}[hbt!]
	\centering
	\subfigure[Model 1 initial band]{\label{initial band 1}
		\includegraphics[width=.4\textwidth,trim={0cm 0cm 0.5cm 0.5cm},clip]{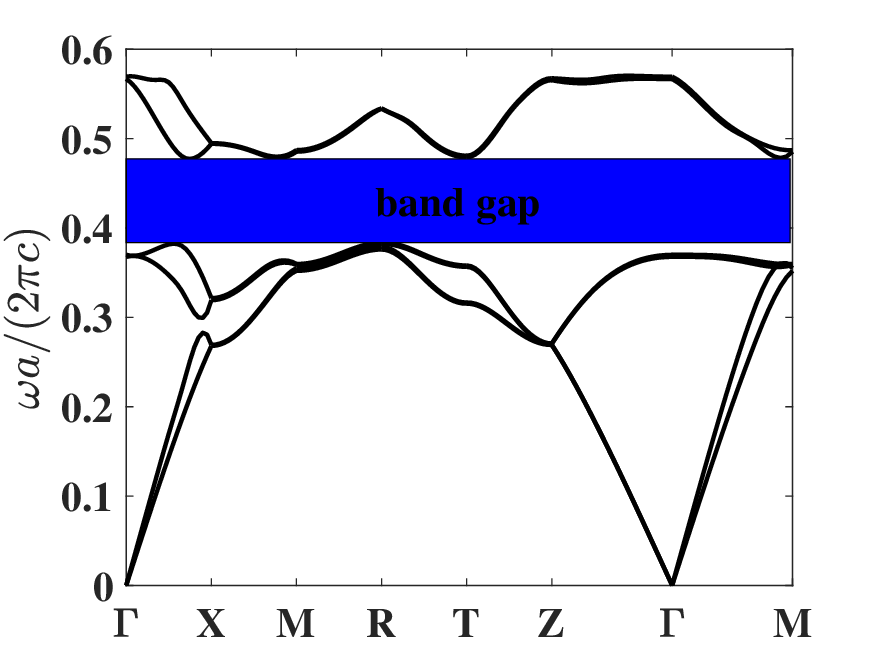}}%
	\quad
	\subfigure[Model 2 initial band]{\label{initial band 2}
		\includegraphics[width=.4\textwidth,trim={0cm 0cm 0.5cm 0.5cm},clip]{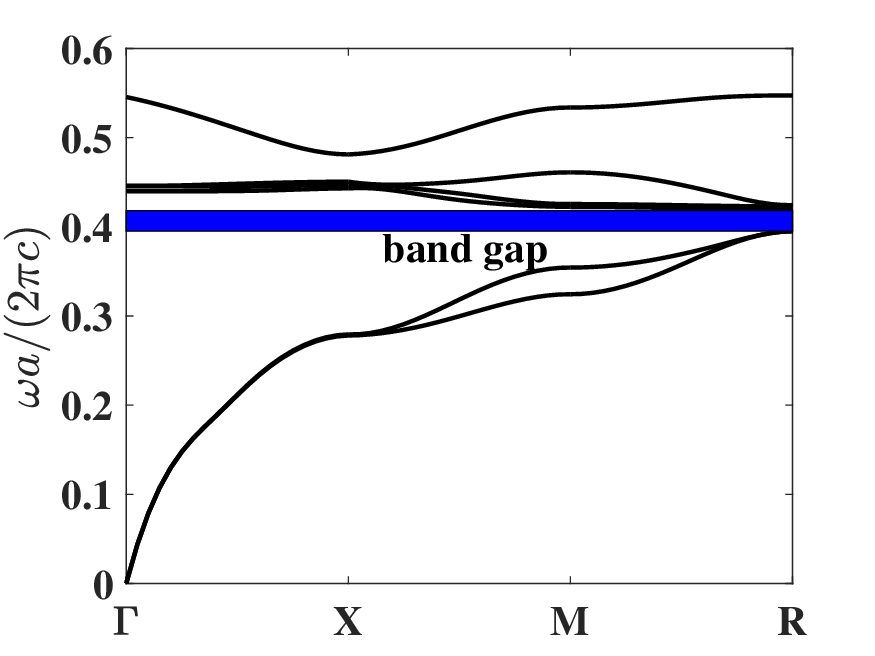}	}%
	\centering
	\caption{Band structures along the high symmetry points.}
	\label{band structure initial}
\end{figure}

Next we describe the optimization results of normalized band gap \eqref{objective 1}.

For {Model 1}, we first fix $\theta_3=\theta_4=0.25a$ and only optimize $(\theta_1,\theta_2)$, which denotes the width of the four blocks on the 1st and 4th layers. The outcome is shown in Fig. \ref{bayes 1}. Several iterations of Algorithm \ref{alg optimization} generate a set of parameters with their band gaps being larger than the known band gap. The optimal values we obtain is $(\theta_1,\theta_2)=(0.1271a,0.1329a)$ and the maximal normalized band gap is $\hat{\phi}_2((0.1271a,0.1329a);4)=0.2168$. This optimization process generates a larger band gap compared to existing literature where the design variables are chosen to be $\theta_1=\theta_2=0.125a$ and the corresponding normalized band gap is $0.2159$. For the optimized parameter, the bandgap width increases by  $0.4177\%$. Further, we optimize parameters $(\theta_1, \theta_2, \theta_3, \theta_4)$ simultaneously. The results show that when $(\theta_1, \theta_2, \theta_3, \theta_4)=(0.1696a,0.1810a,0.2342a,0.3214a)$, one obtains a unit cell design with a larger band gap than the one in the literature. The band gap of the configuration is $0.222$, which is $3.0236\%$ higher than the original one. The first six band functions, under these two new sets of design parameters, along the high symmetry points are shown in Fig. \ref{band structure}.

\begin{figure}[hbt!]
	\centering
	\subfigure[Model 1: optimize $\theta_1,\theta_2$]{\label{bayes 1}
		\includegraphics[width=.45\textwidth,trim={0.5cm 0.1cm 0.5cm 0cm},clip]{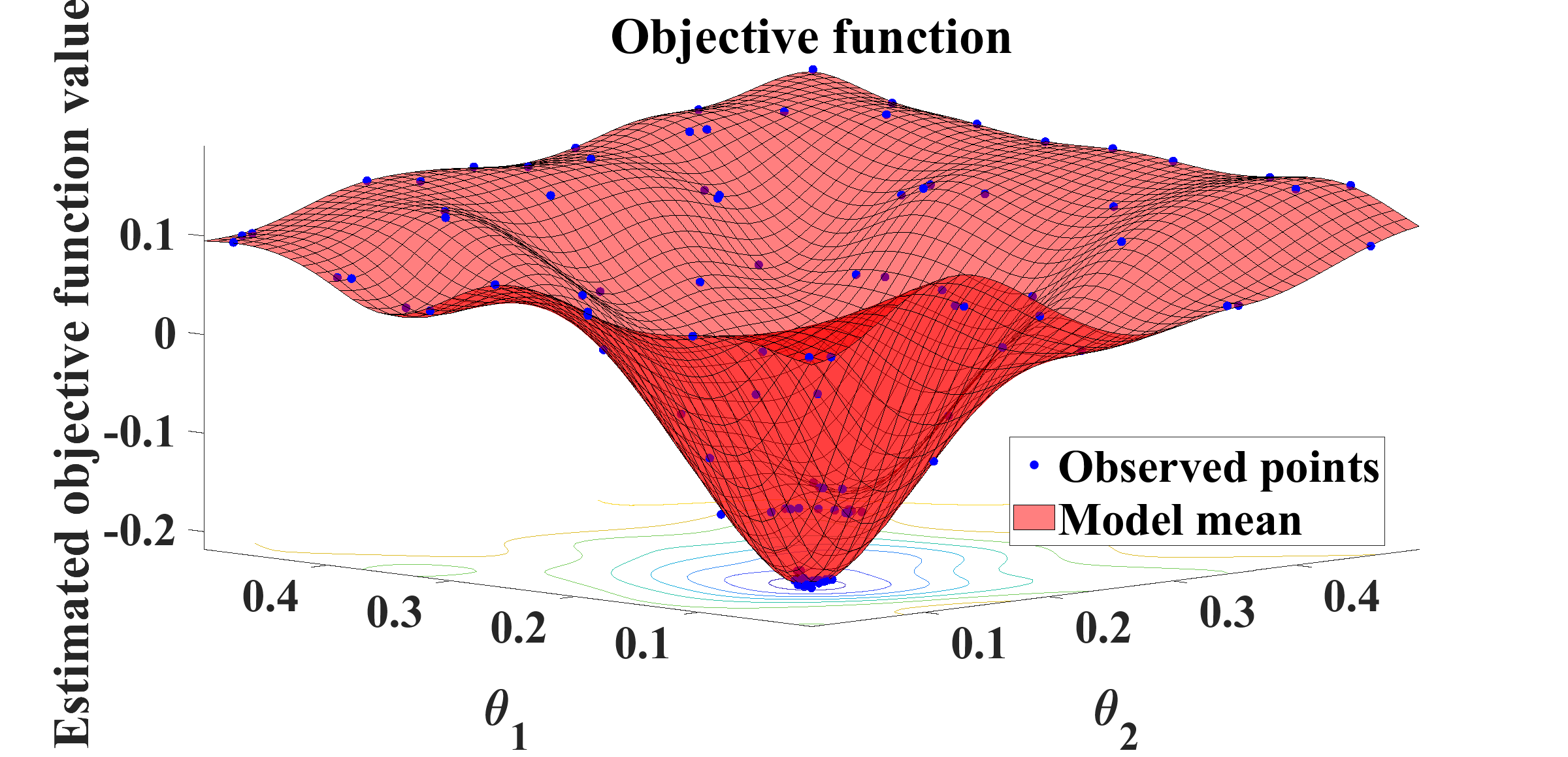}
	}%
	\quad
	\subfigure[Model 2 optimize $\theta_1,\cdots,\theta_4$ with $\theta_1=\theta_2=\theta_3$]{\label{bayes 2}
		\includegraphics[width=.45\textwidth,trim={0.5cm 0.1cm 0.5cm 0cm},clip]{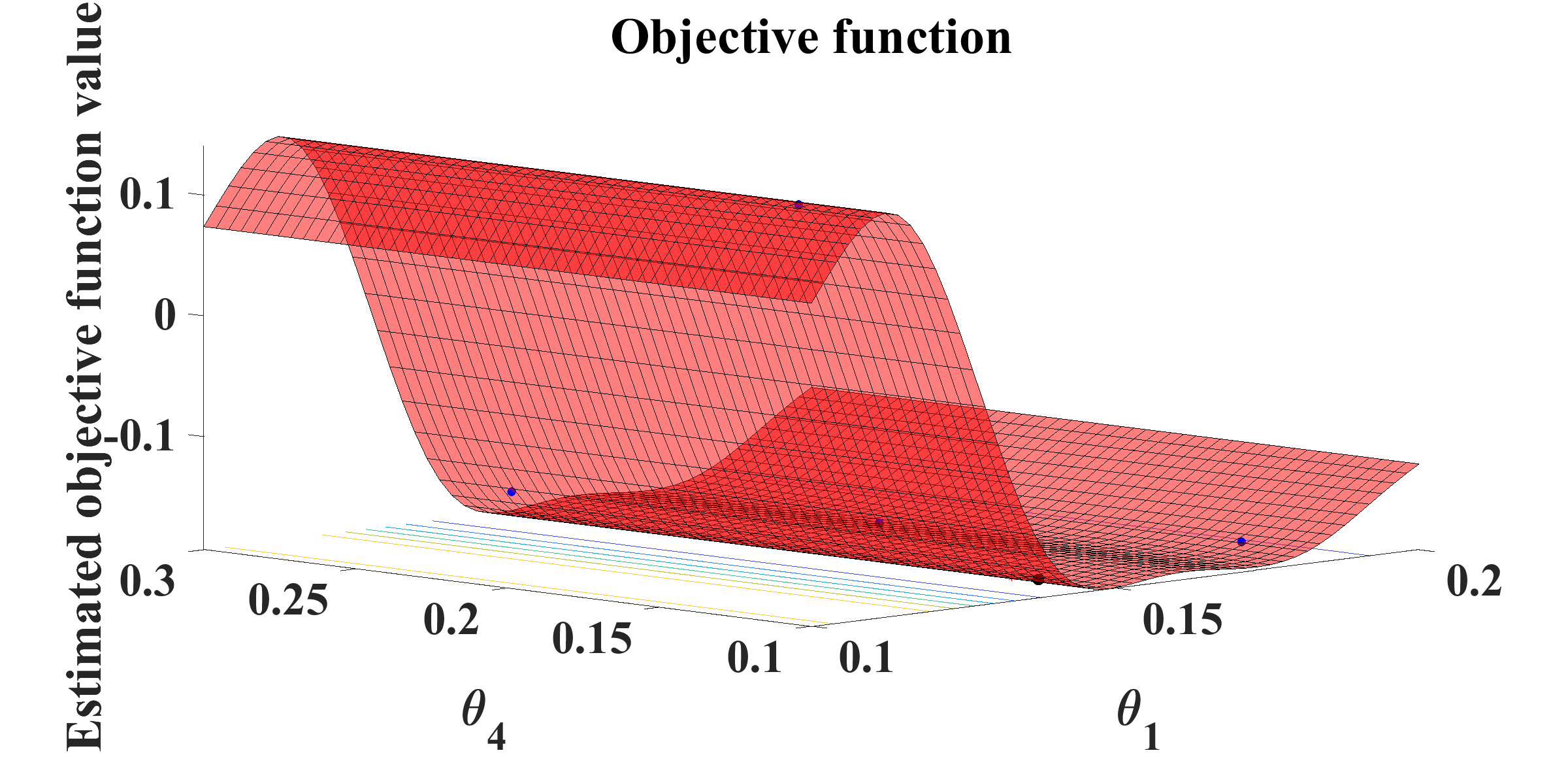}
	}%
	\centering
	\caption{Bayesian optimization.}
	\label{bayes}
\end{figure}

For {Model 2}, we first consider blocks of square cross-section, i.e. $\theta_1=\theta_2=\theta_3$ in \ref{fig1_2} to get a visualized optimization process in Figure \ref{bayes 2}. In this case, Algorithm \ref{alg optimization} produces a set of parameters such that the band gap is larger than that of the configuration in reference \cite{liu20223d} after a few iterations of parameter selection. The optimal parameters are $(\theta_1,\theta_2,\theta_3,\theta_4)=(0.1707a,0.1707a,0.1707a,0.2190a)$ with the normalized band gap is $0.1906$, which is $238.8028\%$ higher than the original band gap. Next we optimize parameters $(\theta_1, \theta_2, \theta_3, \theta_4)$ independently. The results show that $(\theta_1, \theta_2,\theta_3,\theta_4):=0.1798a,0.1777a, 0.1842a,0.2992a)$ yields a unit cell design with a much larger band gap than the one reported in the literature. The band gap of the configuration is $0.2046$, which is $263.6872\%$ higher than the original band gap. The first six band functions along the high symmetry points under these two sets of design parameters are depicted in Fig. \ref{band structure 2}.

\begin{figure}[hbt!]
\centering
\subfigure[band structure when optimizing $\theta_1,\theta_2$.]{\label{band 1}
\includegraphics[width=.4\textwidth,trim={0cm 0cm 0.5cm 0.5cm},clip]{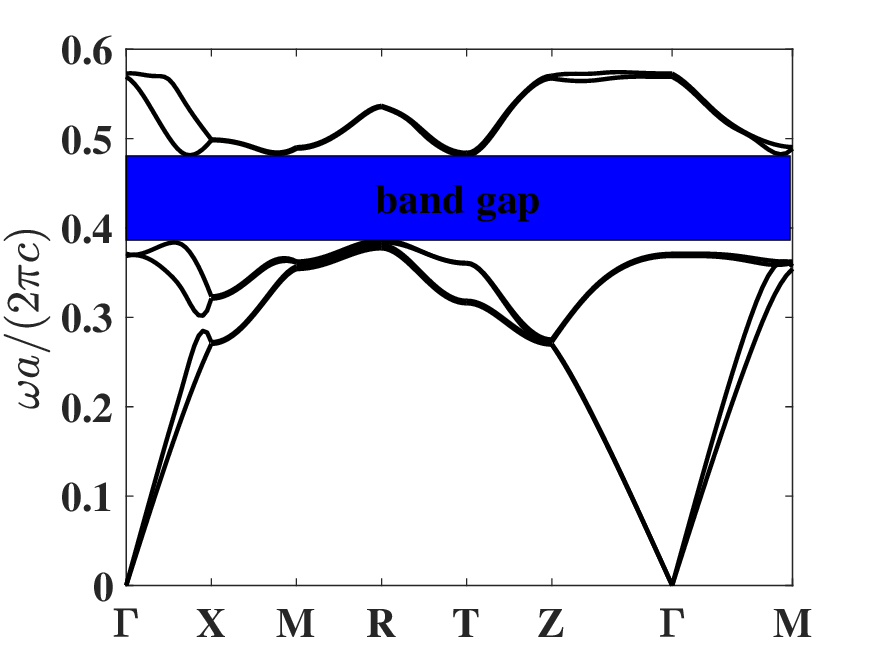}
}%
\quad
\subfigure[band structure when optimizing $\theta_1,\cdots,\theta_4$.]{\label{band 2}
\includegraphics[width=.4\textwidth,trim={0cm 0cm 0.5cm 0.5cm},clip]{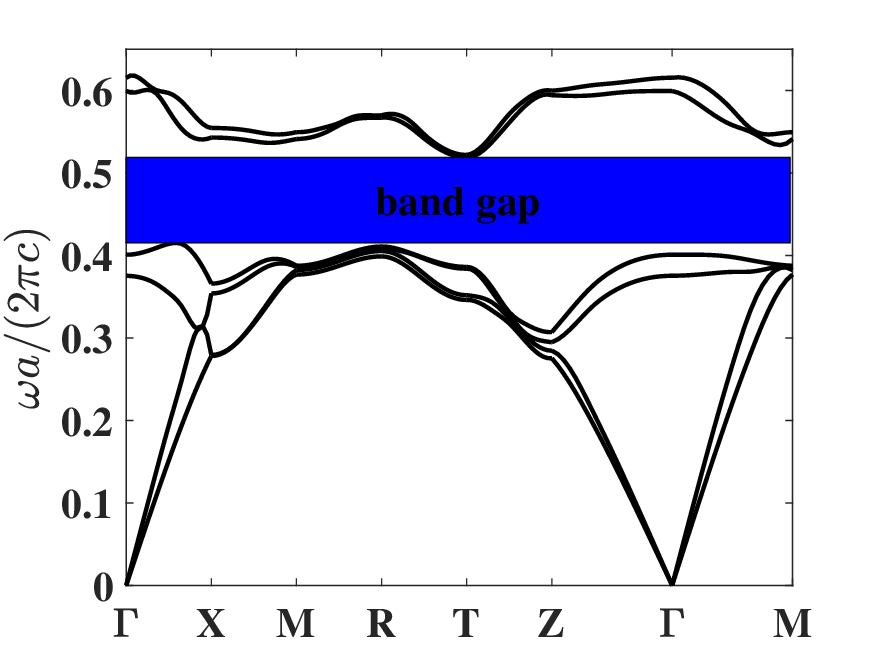}
}%
\centering
\caption{Model 1: Band structures along the high symmetry points.}
\label{band structure}
\end{figure}

During the optimization process, for any generated "next point", we adaptively select the sampling points in the first Brillouin zone and element-wisely interpolate $\lambda_{\ell}(\mathbf{k})$ and $\lambda_{\ell+1}(\mathbf{k})$. During these four optimization processes, we performed 8 loops of adaptive mesh refinement under each set of parameters.  Fig. \ref{points 1} shows the points used to approximate the objective function under the optimal parameters. We observe that the distribution of the sampling points is not uniform.

\begin{figure}[hbt!]
	\centering
	\subfigure[band structure when optimizing $\theta_1,\cdots,\theta_4$ with $\theta_1=\theta_2=\theta_3$.]{\label{band 1_2}
		\includegraphics[width=.4\textwidth,trim={0cm 0cm 0.5cm 0.5cm},clip]{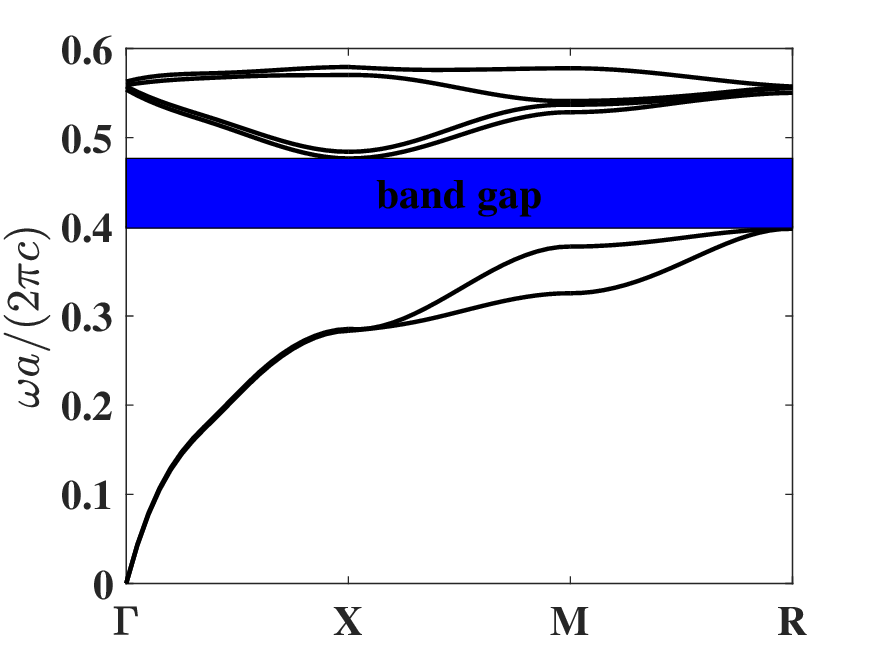}
	}%
	\quad
	\subfigure[band structure when optimizing $\theta_1,\cdots,\theta_4$ independently.]{\label{band 2_2}
		\includegraphics[width=.4\textwidth,trim={0cm 0cm 0.5cm 0.5cm},clip]{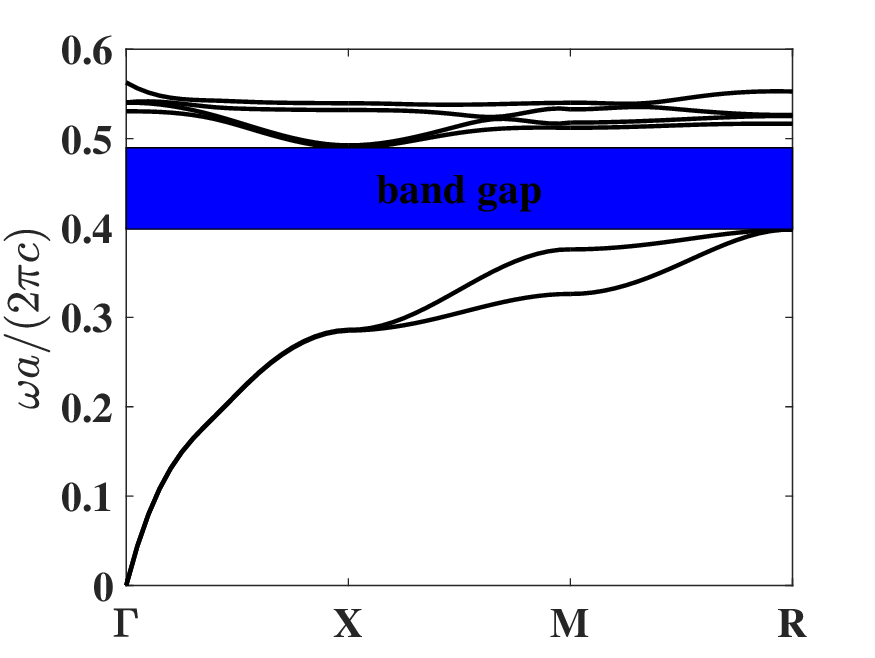}
	}%
	\centering
	\caption{Model 2: Band structures along the high symmetry points.}
	\label{band structure 2}
\end{figure}

\begin{figure}[hbt!]
\centering
\subfigure[Loop 3]{\label{l3}
	\includegraphics[width=.3\textwidth,trim={0cm 0cm 0cm 0cm},clip]{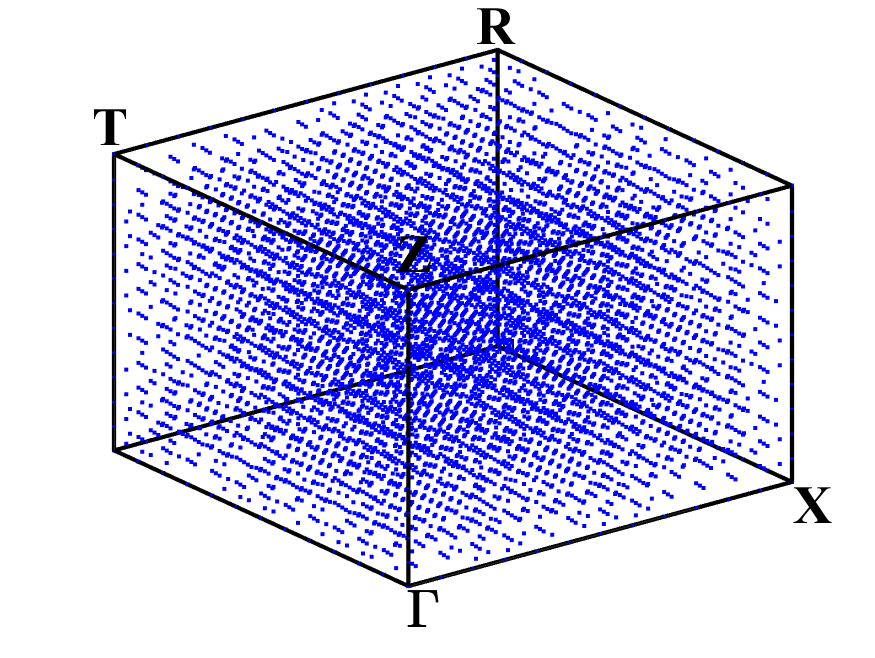}
}%
\quad
\subfigure[Loop 4]{\label{l4}
	\includegraphics[width=.3\textwidth,trim={0cm 0cm 0cm 0cm},clip]{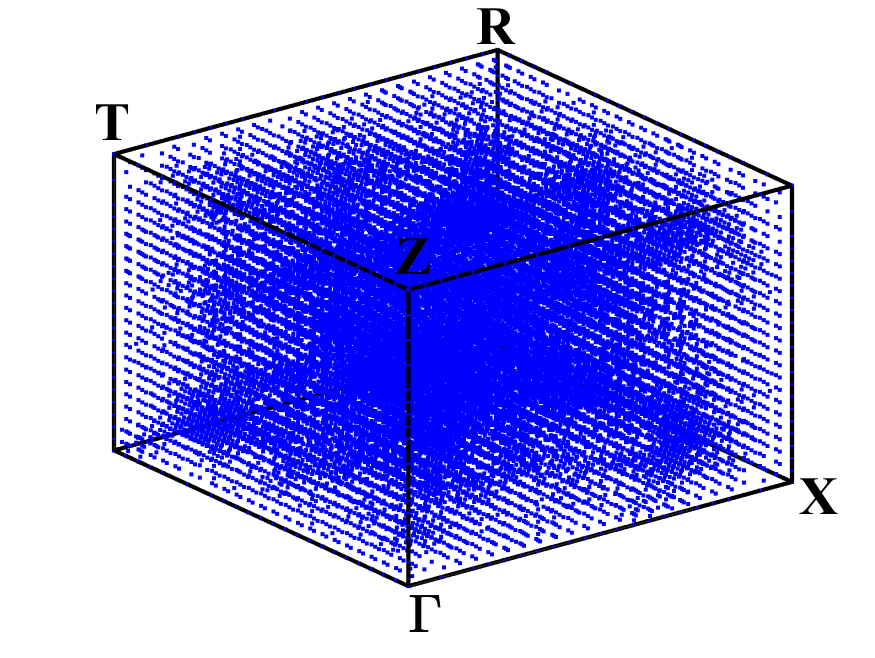}
}%
\quad
\subfigure[Loop 5]{\label{distance l4}
	\includegraphics[width=.3\textwidth,trim={0cm 0cm 0cm 0cm},clip]{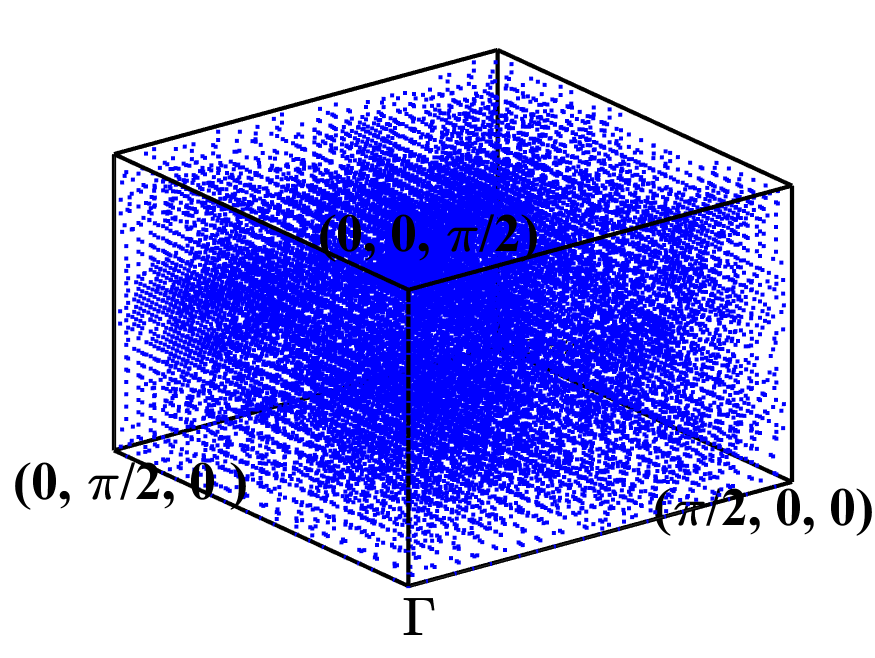}
}%
\quad
\centering
\subfigure[Loop 6]{\label{l5}
	\includegraphics[width=.3\textwidth,trim={0cm 0cm 0cm 0cm},clip]{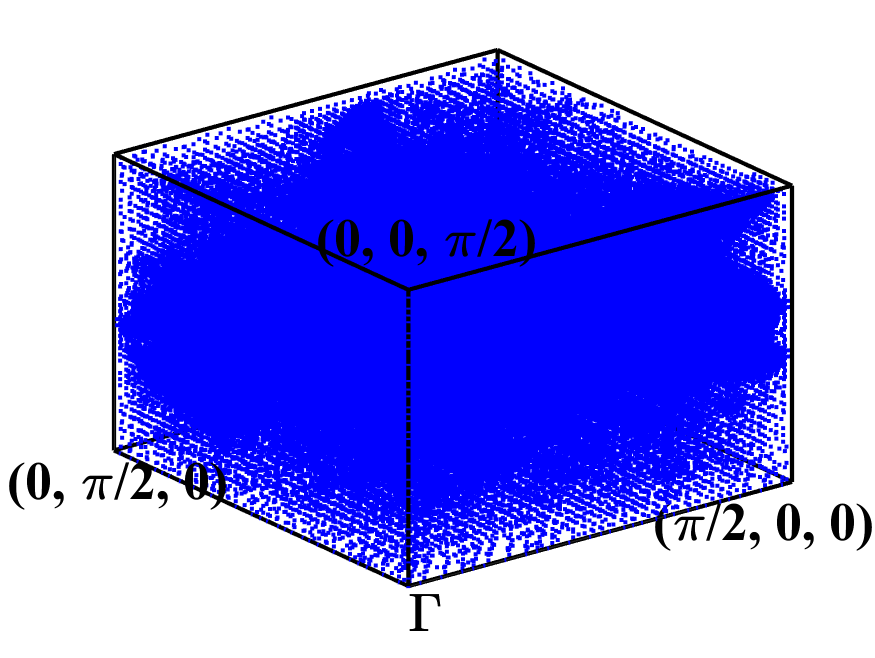}
}%
\quad
\subfigure[Loop 7]{\label{distance l5}
	\includegraphics[width=.3\textwidth,trim={0cm 0cm 0cm 0cm},clip]{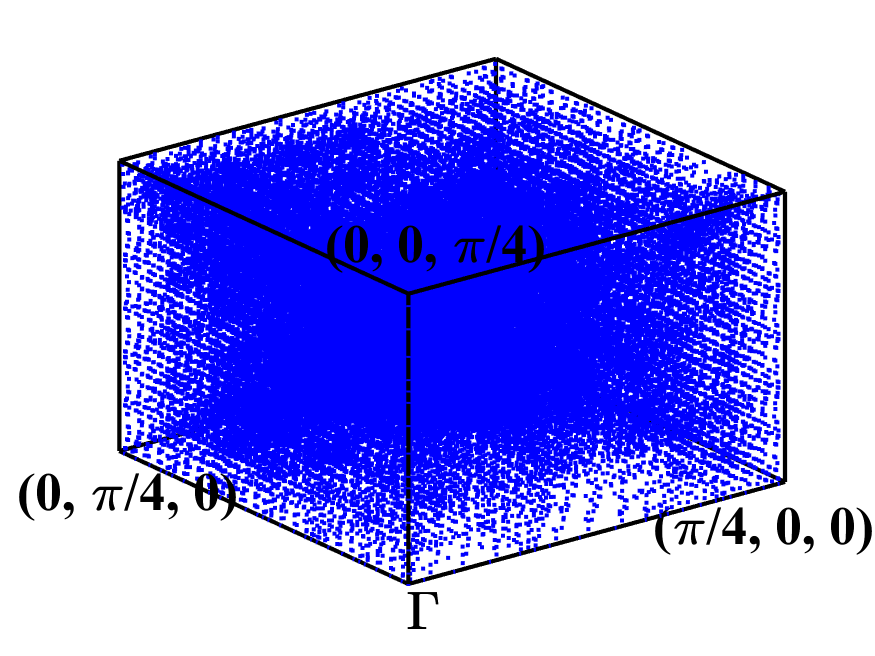}
}%
\quad
\centering
\subfigure[Loop 8]{\label{l6}
	\includegraphics[width=.3\textwidth,trim={0cm 0cm 0cm 0cm},clip]{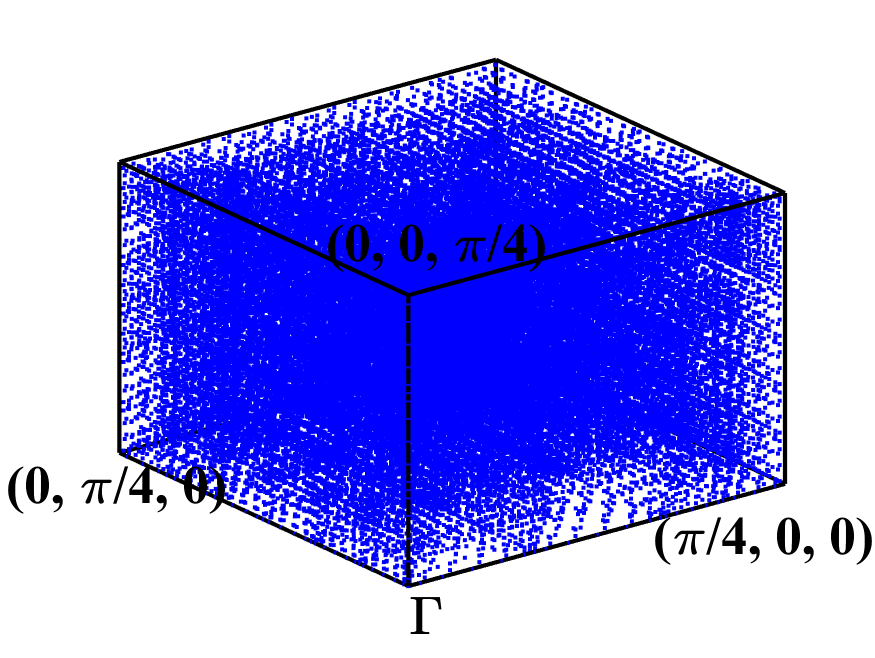}
}%
\\
\subfigure[Loop 3]{\label{distance l6}
	\includegraphics[width=.3\textwidth,trim={0cm 0cm 0cm 0cm},clip]{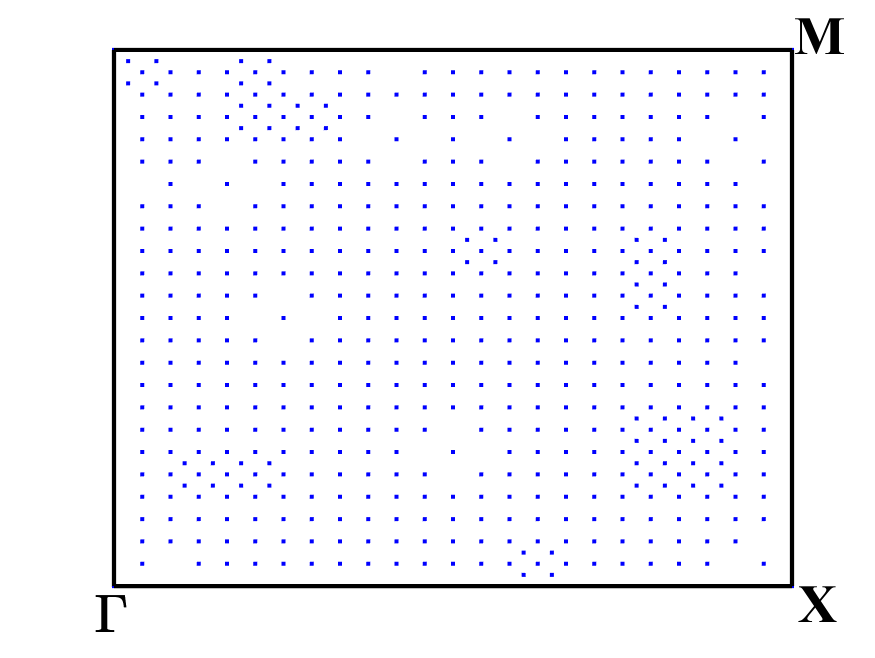}
}%
\quad
\centering
\subfigure[Loop 4]{\label{l7}
	\includegraphics[width=.3\textwidth,trim={0cm 0cm 0cm 0cm},clip]{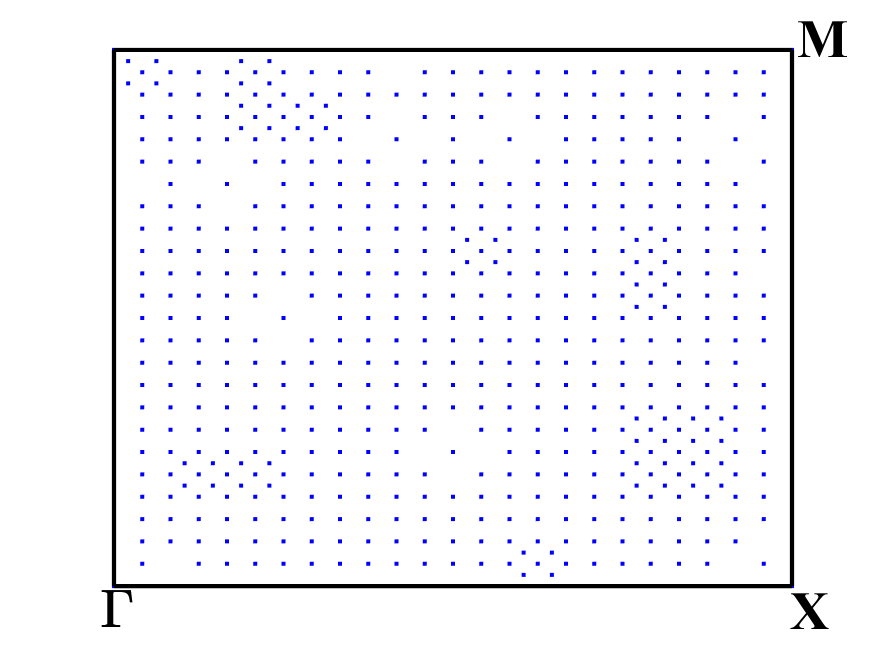}
}%
\quad
\subfigure[Loop 5]{\label{distance l7}
	\includegraphics[width=.3\textwidth,trim={0cm 0cm 0cm 0cm},clip]{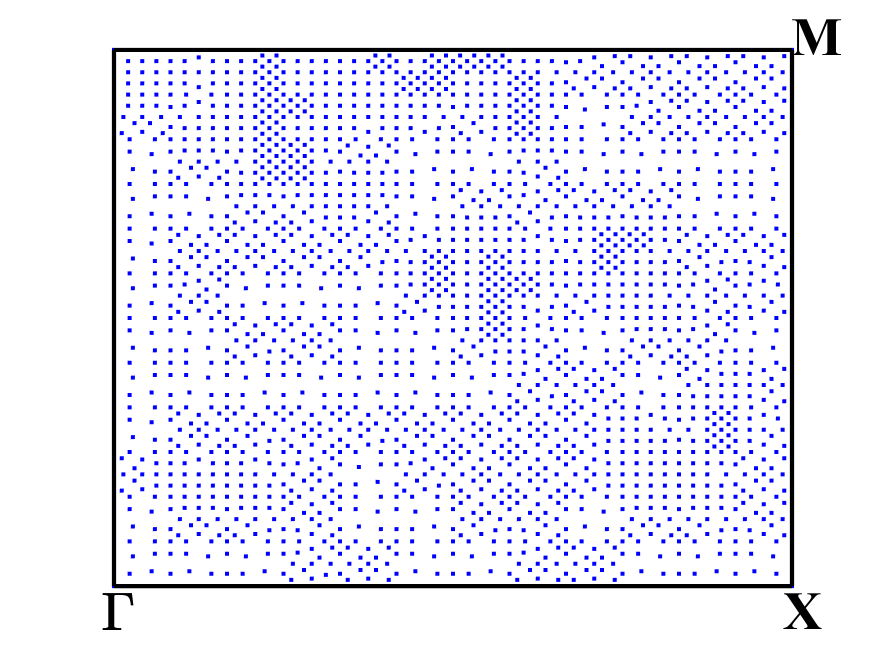}
}%
\quad
\centering
\subfigure[Loop 6]{\label{l8}
	\includegraphics[width=.3\textwidth,trim={0cm 0cm 0cm 0cm},clip]{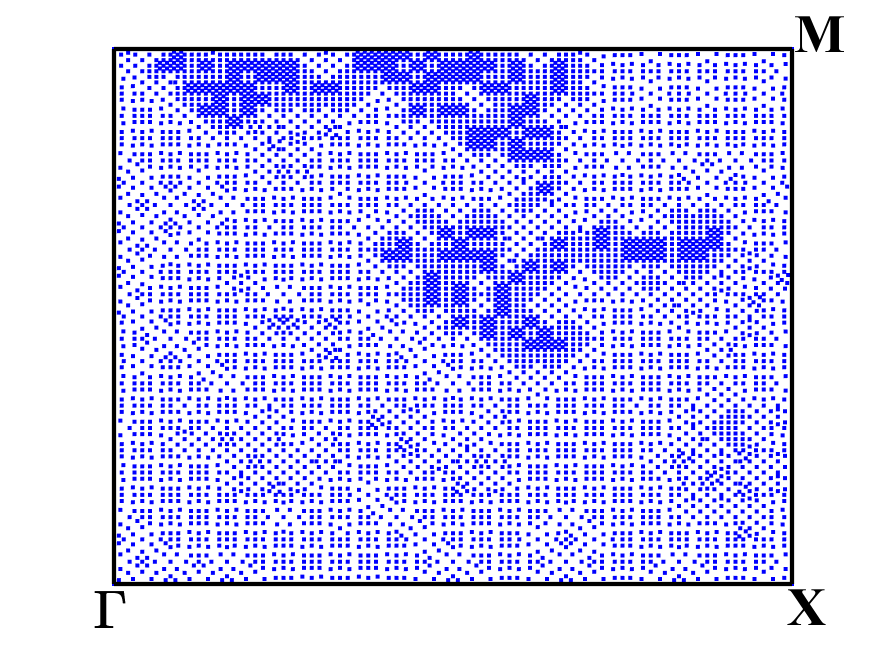}}%
\quad
\subfigure[Loop 7]{\label{distance l8}
	\includegraphics[width=.3\textwidth,trim={0cm 0cm 0cm 0cm},clip]{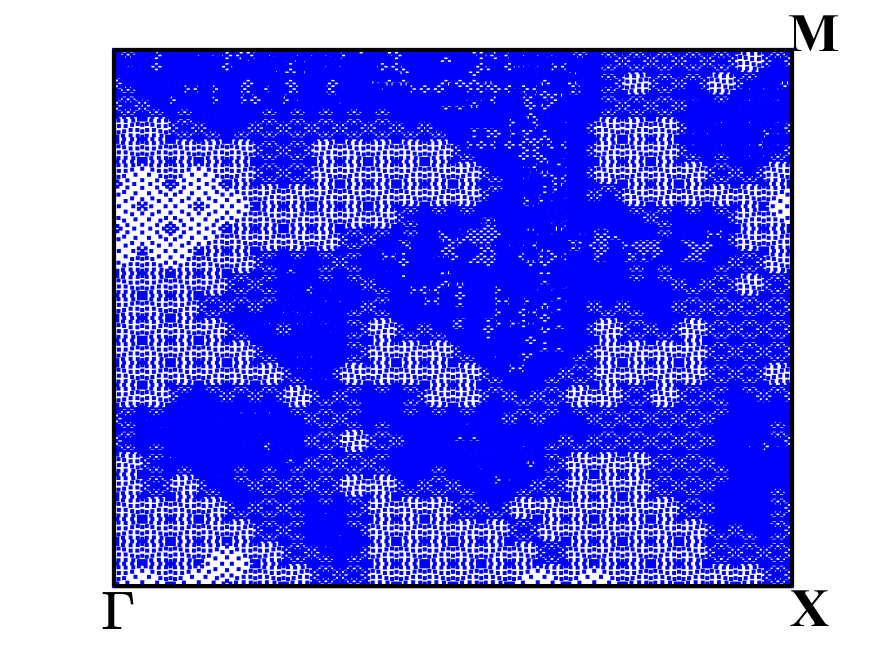}
	
}%
\quad
\subfigure[Loop 8]{\label{distance l9}
	\includegraphics[width=.3\textwidth,trim={0cm 0cm 0cm 0cm},clip]{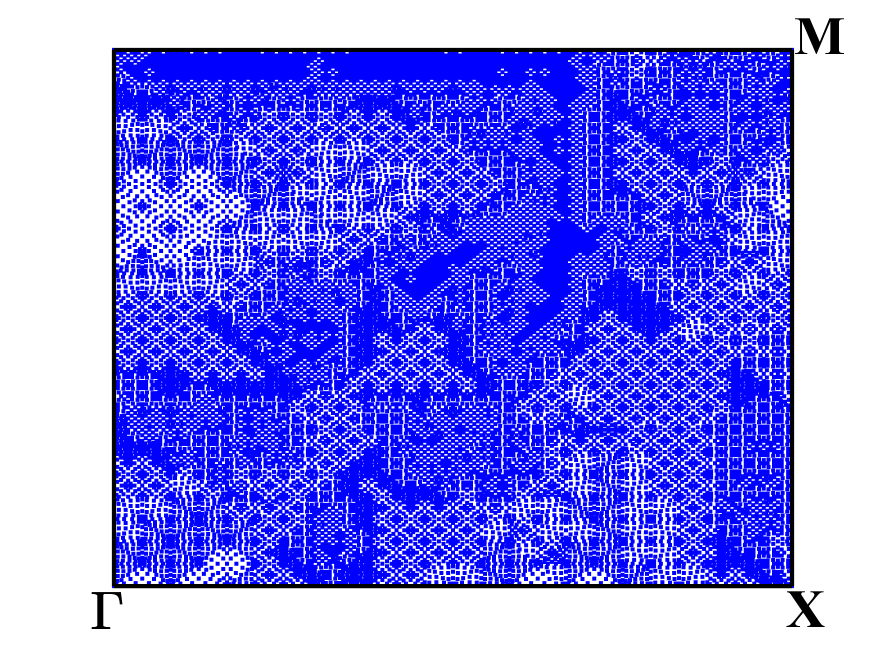}}
\\
\centering
\caption{Sampling points produced by Algorithm \ref{alg1} for Model 1 with $(\theta_1, \theta_2, \theta_3, \theta_4)=(0.1696a,0.1810a,0.2342a,0.3214a)$. (a)-(f): side view; (g)-(l): top view. We present a zoomed-in view in (c)-(f) for a better illustration.}
\label{points 1}
\end{figure}

\begin{figure}[hbt!]
	\centering
	\subfigure[Model 1 with optimal $\theta_1,\cdots,\theta_4$.]{\label{loglog maxi Model 1 optimize 4}
		\includegraphics[width=.35\textwidth,trim={0cm 0cm 0cm 0cm},clip]{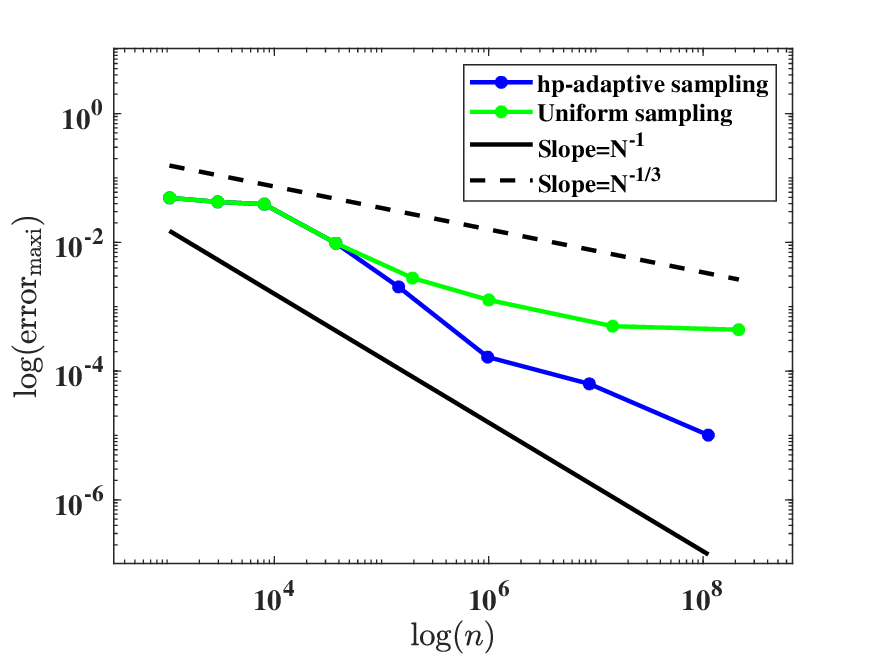}
	}%
	\quad
	\subfigure[Model 1 with optimal $\theta_1,\cdots,\theta_4$.]{\label{loglog average Model 1 optimize 4}
		\includegraphics[width=.35\textwidth,trim={0cm 0cm 0cm 0cm},clip]{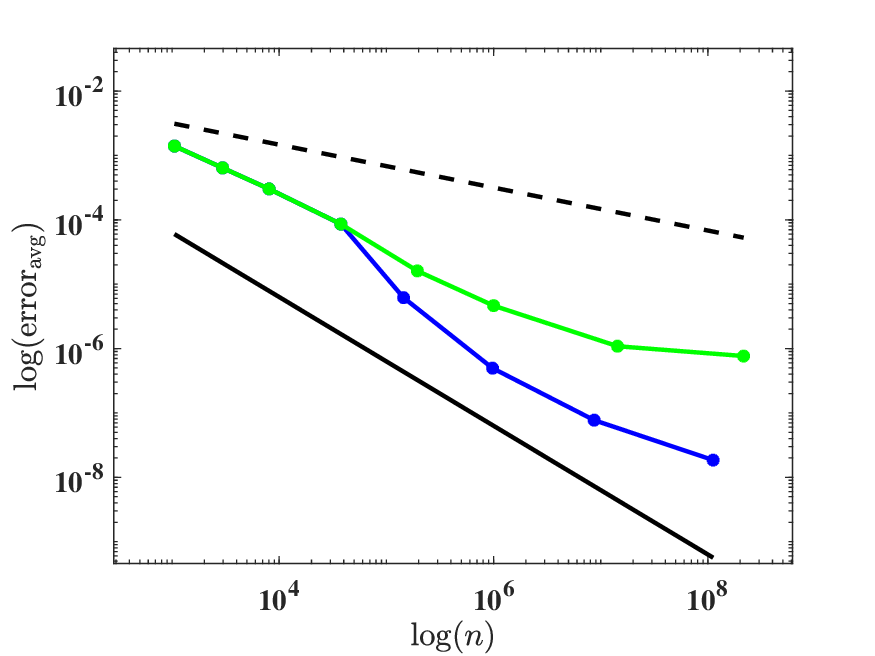}
	}
\\
	\centering
\subfigure[Model 1 with optimal $\theta_1,\theta_2$.]{\label{loglog maxi Model 1 optimize 2}
	\includegraphics[width=.35\textwidth,trim={0cm 0cm 0cm 0cm},clip]{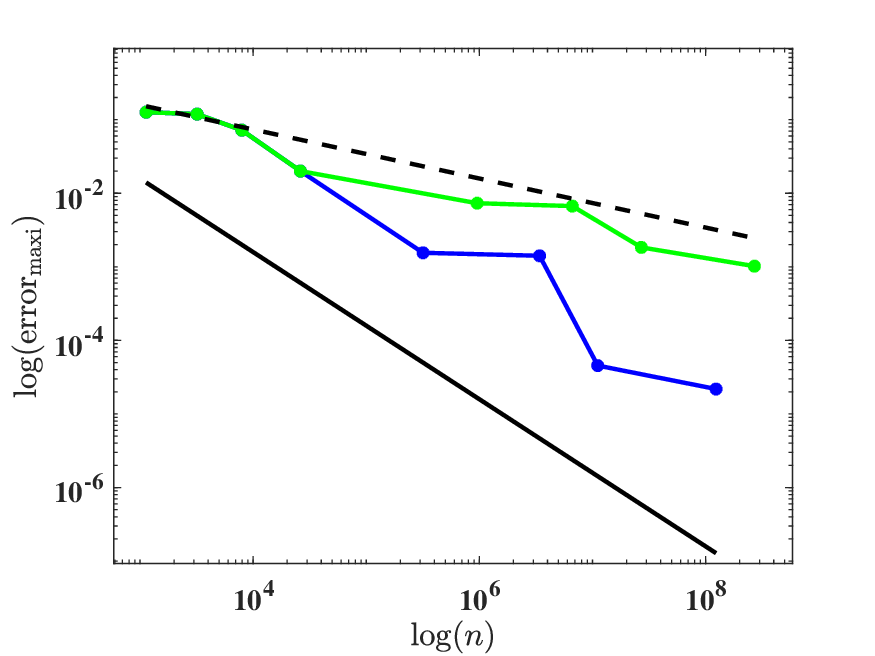}
}%
\quad
\subfigure[Model 1 with optimal $\theta_1,\theta_2$.]{\label{loglog average Model 1 optimize 2}
	\includegraphics[width=.35\textwidth,trim={0cm 0cm 0cm 0cm},clip]{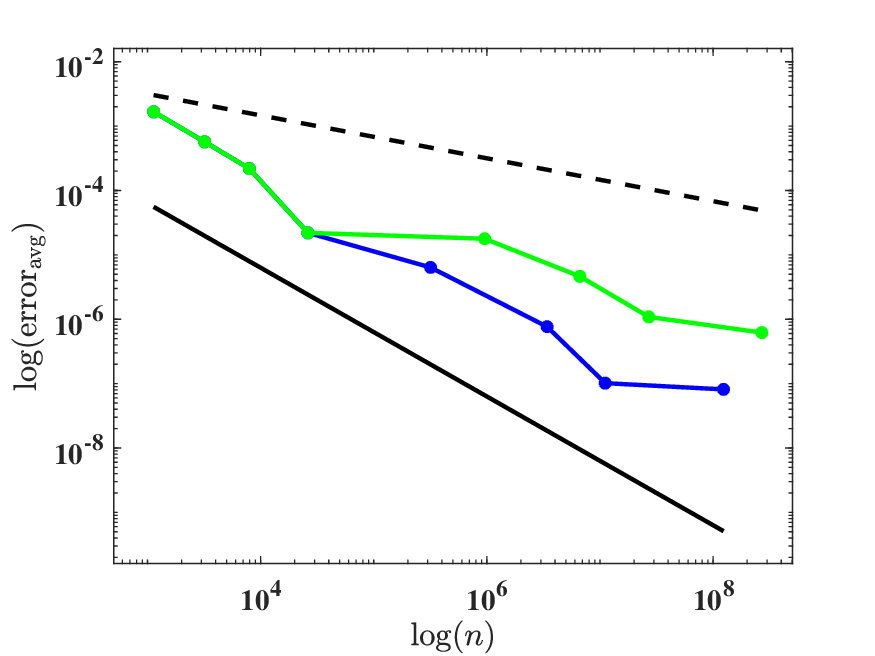}
}%
\\
	\centering
	\subfigure[Model 2 with optimal $\theta_1,\cdots,\theta_4$.]{\label{loglog maxi Model 2 optimize 4}
		\includegraphics[width=.35\textwidth,trim={0cm 0cm 0cm 0cm},clip]{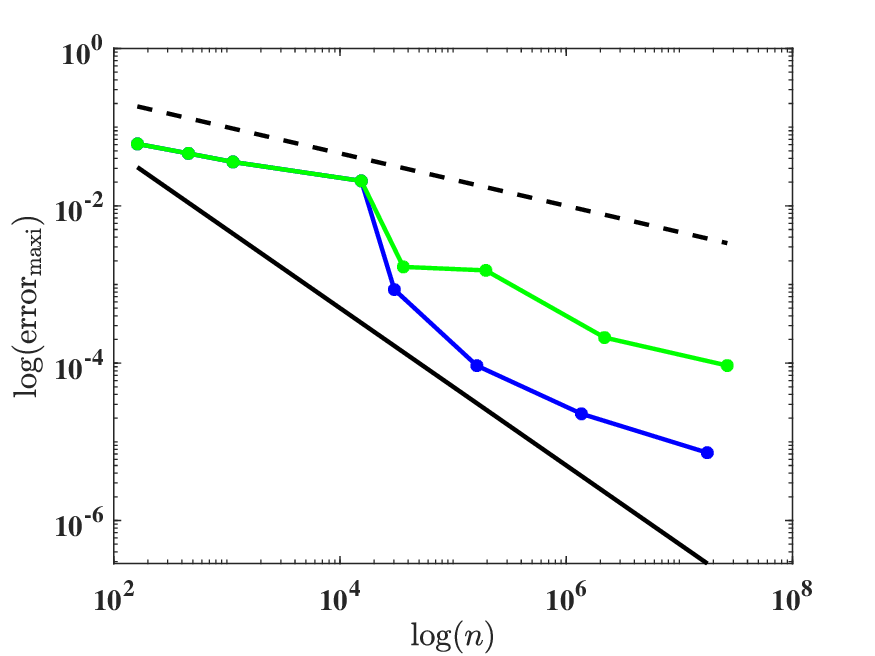}
	}%
	\quad
	\subfigure[Model 2 with optimal $\theta_1,\cdots,\theta_4$.]{\label{loglog average Model 2 optimize 4}
		\includegraphics[width=.35\textwidth,trim={0cm 0cm 0cm 0cm},clip]{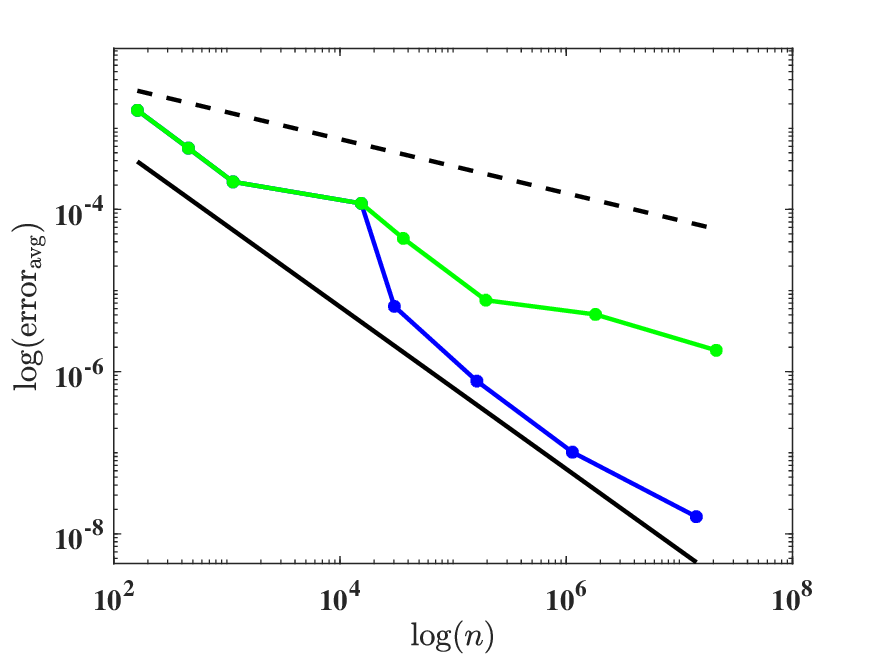}
	}%
	\centering
	\\
		\subfigure[Model 2 with optimal $\theta_1=\theta_2=\theta_3,\theta_4$.]{\label{loglog maxi Model 2 optimize 2}
		\includegraphics[width=.35\textwidth,trim={0cm 0cm 0cm 0cm},clip]{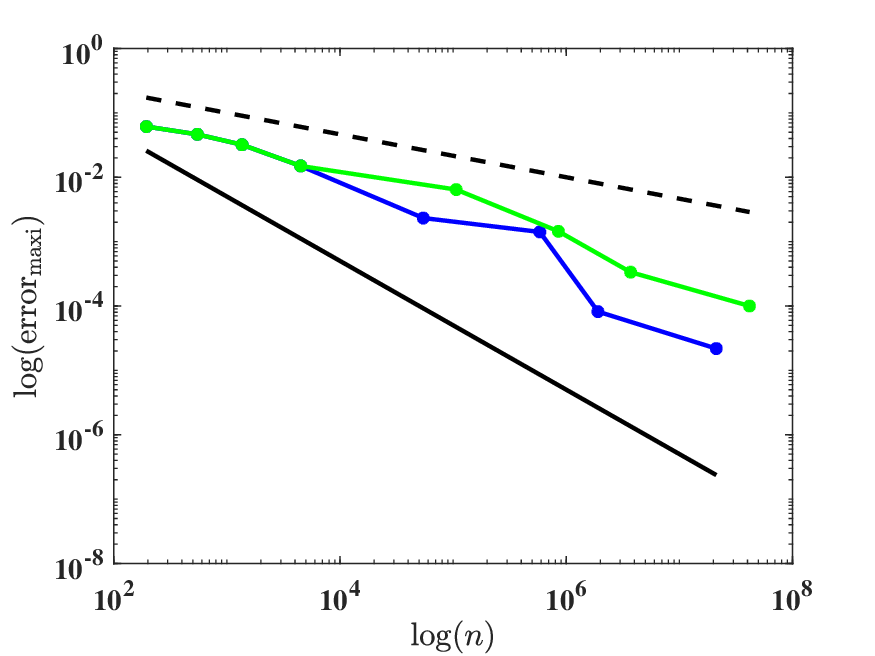}
	}%
	\quad
	\subfigure[Model 2 with optimal $\theta_1=\theta_2=\theta_3,\theta_4$.]{\label{loglog average Model 2 optimize 2}
		\includegraphics[width=.35\textwidth,trim={0cm 0cm 0cm 0cm},clip]{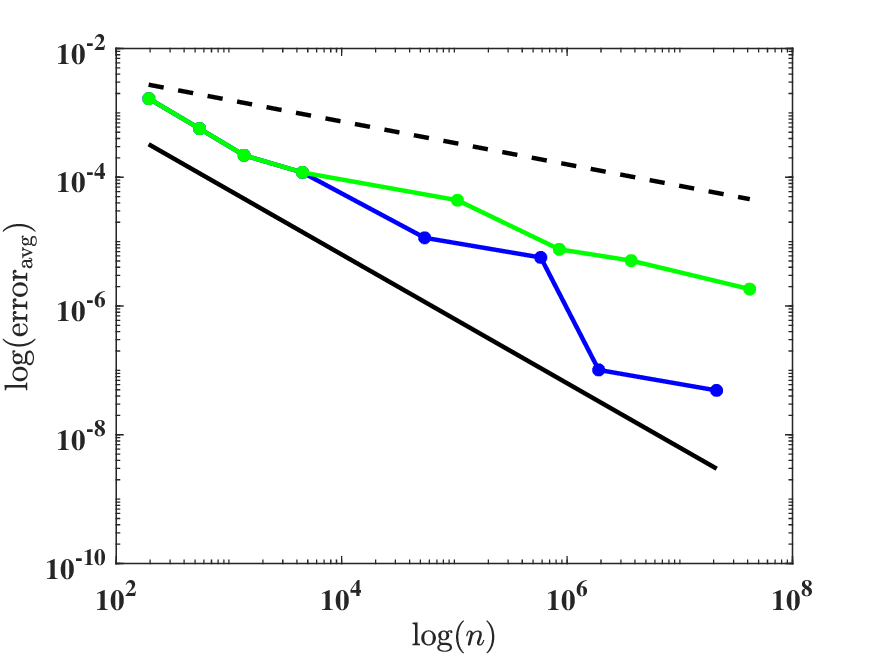}
	}%
	\centering
	\caption{Maximum (left) and average (right) relative error with respect to the number of sampling points.}
	\label{loglog Model 1&2}
\end{figure}

\subsection{Efficiency of Algorithm \ref{alg2}}\label{subsec:eff}
Finally, we show the efficiency of Algorithm \ref{alg2} by comparing with uniform mesh refinement with lowest-order element-wise interpolation under the optimal parameters. In both models, we randomly choose 5000 $\mathbf{k}$-points in the IBZ (denoted by $\hat{\mathcal{B}}$), on which we compute the reference solution using the $\mathbf{H}(\text{curl})$-conforming $\mathbf{k}$-modified N\'{e}d\'{e}lec edge FEM \eqref{discrete variational 2} with the mesh in Fig. \ref{mesh}. The pointwise relative error is then computed on $\hat{\mathcal{B}}$ by
\begin{align*}
	e_i(\mathbf{k}):=\frac{\vert \omega_i(\mathbf{k}) -\hat \omega_i(\mathbf{k})\vert}{{\omega_i}(\mathbf{k})} \quad \text{ for }\mathbf{k}\in\hat{\mathcal{B}},
\end{align*}
for some certain band number $i$. Here, $\omega_i(\mathbf{k})$ is the $i$th reference band function obtained directly by the $\mathbf{H}(\text{curl})$-conforming $\mathbf{k}$-modified N\'{e}d\'{e}lec edge FEM using the same mesh on the unit cell $\Omega$, and $\hat\omega_i(\mathbf{k})$ is obtained by the element-wise polynomial interpolation. We measure the accuracy of the methods using maximum and average relative errors
\begin{align*}
	\operatorname{error}_{\infty}:=\max_{i\in \mathbb{M}_q}\max_{\mathbf{k}\in\hat{\mathcal{B}}}\left|e_i(\mathbf{k})\right|\quad \mbox{and}\quad
	\operatorname{error}_{\text{avg}}:=\text{average}_{i\in \mathbb{M}_q}\max_{\mathbf{k}\in\hat{\mathcal{B}}}\left|e_i(\mathbf{k})\right|,
\end{align*}
where $q\in\{1,2\}$, $\mathbb{M}_1:=\{4,5\}$ for Model 1 and $\mathbb{M}_2:=\{2,3\}$ for Model 2.

We depict in Fig. \ref{points 1} the distribution of the sampling points for Model 1 with four optimizing parameters obtained from Algorithm \ref{alg1}. We observe that the sampling points are not uniformly distributed.
Moreover, Fig. \ref{loglog Model 1&2} presents the relative errors with respect to the number of sampling points, where $\operatorname{error}_{\infty}$ and $\operatorname{error}_{\text{avg}}$ versus $N$, i.e., the number of sampling points, on log-log coordinates with $\kappa=2\sqrt{2}$ and $\mu=1$ are plotted. We observe that Algorithm \ref{alg2} has a much faster convergence rate than the uniform one. Further, compared with the reference lines, Algorithm \ref{alg2} has an approximately first-order convergence rate, while the uniform method is much less efficient with an approximately $\frac{1}{3}$-order convergence rate, which is consistent with Theorem \ref{algebraic} and Remark \ref{uniform}.

\section{Conclusions}\label{sec:conclusion}
In this work, we present a generalization of an $hp$-adaptive sampling method for computing the band functions of three-dimensional photonic crystals with an application in the band gap maximization. We present theoretical analysis and numerical tests to verify its performance.
For the band gap maximization problem, we generate an adaptive mesh in the parameter domain each time when the geometry of the photonic crystal is updated. One intriguing question is whether one can use the same adaptive mesh for a few updates in the geometry of the photonic crystals in order to make our algorithm more efficient. Also, it is of interest to have more freedom in constructing photonic crystals with an infinite number of design parameters, e.g., the interface between two different materials can be designed.

\bibliographystyle{abbrv}
\bibliography{refer2}
\end{document}